\newcommand{\bm}[1]{\mathbf{#1}}
\newcommand{\RR}{\mathbb{R}}
\newcommand{\cN}{{\mathcal{N}}}
\newcommand{\cR}{{\mathcal{R}}}
\newcommand{\cF}{{\mathcal{F}}}
\newcommand{\cG}{{\mathcal{G}}}
\newcommand{\ee}{{\mathrm{e}}}
\newcommand{\psib}{\boldsymbol\psi}
\newcommand{\dx}{\,dx}
\newcommand{\dt}{\,dt}
\newcommand{\dz}{\,dz}
\DeclareMathOperator{\Span}{span}
\DeclareMathOperator{\arcsinh}{arcsinh}
\newtheorem{remark}[theorem]{Remark}
\newtheorem{example}[theorem]{Example}
\begin{document}

\title{Regularized minimal-norm solution of an overdetermined system of first 
kind integral equations
}

\author{Patricia D\'iaz de Alba\thanks{Department of Mathematics, University of 
Salerno, via Giovanni Paolo II 132, 84084 Fisciano, Italy, 
\texttt{pdiazdealba@unisa.it}}
\and
Luisa Fermo\thanks{Department of Mathematics and Computer Science, University 
of Cagliari, via Ospedale 72, 09124 Cagliari, Italy, \texttt{fermo@unica.it, 
federica.pes@unica.it, rodriguez@unica.it}}
\and
Federica Pes\footnotemark[2]
\and
Giuseppe Rodriguez\footnotemark[2]
}

\maketitle

\begin{center}
\emph{Dedicated to Claude Brezinski on the occasion of his 80th birthday.}
\end{center}
\medskip

\begin{abstract}
Overdetermined systems of first kind integral equations appear in many
applications. When the right-hand side is discretized, the resulting finite-data
problem is ill-posed and admits infinitely many solutions. We propose a
numerical method to compute the minimal-norm solution in the presence of
boundary constraints. The algorithm stems from the Riesz representation theorem
and operates in a reproducing kernel Hilbert space. Since the resulting linear
system is strongly ill-conditioned, we construct a regularization method
depending on a discrete parameter. It is based on the expansion of the
minimal-norm solution in terms of the singular functions of the integral
operator defining the problem. Two estimation techniques are tested for the
automatic determination of the regularization parameter, namely, the discrepancy
principle and the L-curve method. Numerical results concerning two artificial
test problems demonstrate the excellent performance of the proposed method.
Finally, a particular model typical of geophysical applications, which
reproduces the readings of a frequency domain electromagnetic induction device,
is investigated. The results show that the new method is extremely effective
when the sought solution is smooth, but produces significant information even
for non-smooth solutions.
\end{abstract}

\begin{keywords}
Fredholm integral equations, Riesz representation theorem, reproducing kernel
Hilbert space, linear inverse problems, regularization, FDEM induction
\end{keywords}

\begin{AMS}
65R30, 65R32, 45Q05, 86A22
\end{AMS}

\section{Introduction}\label{sec:introduction}

Fredholm integral equations of the first kind model several physical problems 
arising in different contexts such as medical imaging, image processing, signal 
processing and geophysics. Their standard form is 
\begin{equation}\label{equationFirstkind}
\int_a^b k(x,t) f(t) \dt =g(x), \qquad x\in[c,d],
\end{equation}
where the right-hand  side $g$, usually given at a finite set of points 
$x=x_i$, $i=1,\ldots,n$, represents the experimental data, the kernel $k$, 
often analytically 
known, stands for the impulse response of the experimental equipment, and the 
function 
$f$ is the signal to recover. 

From a theoretical point of view, they are treated in a Hilbert space setting 
which typically coincides with the space of square-integrable functions. The 
corresponding integral operator 
$$
(Kf)(x)=\int_a^b k(x,t) f(t) \dt
$$
is a bounded linear operator from a Hilbert space $H_1$ into a Hilbert space
$H_2$, and a solution $f$ of \eqref{equationFirstkind} exists only if
the right-hand side $g$ belongs to the range of $K$, $\cR(K)\subset H_2$. 
Consequently,
the existence of the solution of \eqref{equationFirstkind} cannot be 
guaranteed for any right-hand side, but only for a restricted class of 
functions $g$ \cite{Groetsch2007}.
The uniqueness of the solution depends upon the structure of the null space of
the operator $K$, but even when it is ensured the problem is still ill-posed
since the stability is missing; see \cite[pag.~155]{Groetsch}.

In an experimental setting, $g$ is certainly an element of $\cR(K)$, as it
represents the data $g(x_i)$ produced by an operator $K$ which reproduces a
real situation. This leads to the integral equation with discrete data
\begin{equation}\label{equationDiscrete}
\int_a^b k(x_i,t) f(t) \dt =g(x_i), \quad i=1,\dots,n.
\end{equation}
However, even when $g\in\cR(K)$, the data values in \eqref{equationDiscrete} are
affected by perturbations due to measuring and rounding errors, so one cannot
be sure that the perturbed right-hand side lies exactly in the range of $K$. 
Moreover, the
solution of \eqref{equationDiscrete} is not unique and it does not depend
continuously on the data. In other words, a discretization
\eqref{equationDiscrete} of  equation \eqref{equationFirstkind} is an
ill-posed problem \cite{Hadamard,wing1991}. This fact makes its numerical
treatment rather delicate, especially if compared to the discretization of
integral equations of the second kind, a typical example of a well-posed 
problem~\cite{Atkinson}. 

The non-uniqueness of the solution of \eqref{equationDiscrete} can be stated as
follows. Let us consider the functions $k_i(t)=k(x_i,t)$, $i=1,\ldots,n$.
By the Gram--Schmidt process it is possible to construct a set of orthonormal
functions $\phi_j(t)$, $j=1,\ldots,\bar{n}\leq n$, such that
$$
\mathcal{S} = \Span\{\phi_1,\ldots,\phi_{\bar{n}}\} = \Span\{k_1,\ldots,k_n\}.
$$
Chosen any function $\psi(t)$ linearly independent of $k_i(t)$, $i=1,\ldots,n$,
the function
$$
\phi_{\bar{n}+1}(t) = \psi(t) - \sum_{j=1}^{\bar{n}} \langle \psi, \phi_j
\rangle \phi_j
$$
is orthogonal to $\mathcal{S}$, so that whenever $f(t)$ is a solution of
\eqref{equationDiscrete} also $f(t)+\alpha\phi_{\bar{n}+1}(t)$ is, for any
$\alpha\in\RR$.

The same considerations about ill-posedness can be repeated for a
system of linear integral equations of the first kind.
In this paper, we focus on overdetermined systems of linear integral equations,
e.g., two equations whose solution is a single unknown function.
According to our knowledge, this problem has not been addressed before in the
literature, although it arises in a variety of applications.
Indeed, specific physical systems can be observed by different devices, or by
the same device with different configurations. This fact results in writing
distinct equations with the same unknown. 

An example is given by the geophysical model presented in \cite{McNeill}; see
also Section \ref{sec:case study}. It reproduces the readings of a
ground conductivity meter, a device composed of two coils, a transmitter and
a receiver, placed at a fixed distance from each other.
The model consists of two
integral equations of the first kind involving the same unknown function, 
representing the electrical conductivity of the soil at a certain depth; see
equations \eqref{linmodel}. The first equation describes the situation in
which both coil axes are aligned vertically with respect to the ground level,
while the second one corresponds to the horizontal orientation of the coils.
This system has been studied in \cite{dfrv19}, under the assumption that the
values of the unknown function at the boundaries are known, either on the basis
of additional measurements or of known geophysical properties of the subsoil.

Further applications are the model considered in \cite{hendr02}, and the
Radon transform \cite{wang2019,wang2021}. In all these situations, the model is
written in terms of an overdetermined system and a priori boundary information
on the signal to recover may be known.

In this paper, motivated by these applications and with the purpose of
developing a method that can be applied to different physical models, we focus
on the following system of $m$ integral equations of the first kind
\begin{equation}\label{model}
\begin{cases}
\displaystyle
\int_a^b k_\ell(x,t) \, f(t) \dt = g_\ell(x), \quad \ell=1,\dots,m,  \quad x
\in [c_\ell,d_\ell], \\
f(a)=f_0, \ f(b)=f_1,
\end{cases}
\end{equation}
where $k_\ell$ and $g_\ell$ are the given kernel and right-hand side  of the 
$\ell$-th equation, respectively, and $f$ is the function to be determined 
satisfying known constraints at the boundary. 
Specifically, given the data at a finite (and often small, in applications) set
of points $x_{\ell,i}\in[c_\ell,d_\ell]$, $i=1,\dots,n_\ell$,
we aim at solving the problem with discrete data
\begin{equation}\label{model2}
\begin{cases}
\displaystyle
\int_a^b k_\ell(x_{\ell,i},t) \, f(t) \dt = g_\ell(x_{\ell,i}), \qquad
\ell=1,\dots,m, \quad i=1,\dots,n_\ell, \\
f(a)=f_0, \ f(b)=f_1.
\end{cases}
\end{equation}

As already observed, a discrete data integral problem as \eqref{model2} has
infinitely many solutions.
Since the data may not belong to the range of the operator, we reformulate it
as a minimal-norm least-squares problem and solve the latter in suitable
function spaces. 
While this approach is rather standard in functional analysis, it has never
been applied to an overdetermined system.
Moreover, as we will show, the corresponding algorithm proves to be very
accurate in the absence of experimental errors, if compared to other standard
approaches, and it naturally leads to an effective regularization technique,
when the data is affected by noise.

Specifically, we consider a reproducing kernel Hilbert space
where, by using the Riesz theory, the minimal-norm solution can be written as a
linear combination of the so-called Riesz representers. Then, the main issue is
to determine the Riesz functions as well as the coefficients of such a linear
combination.
The first ones, which are determined by the reproducing kernel, are expressed
in terms of integrals which need suitable quadrature schemes, whenever they
cannot be evaluated analytically.
The coefficients are obtained by solving a square ill-conditioned linear system.
If the data is only affected by rounding errors, this representation proves to
be accurate.
If the noise level is realistic, the error propagation
completely cancels the solution and a regularized approach is required.

To this end, we introduce a regularization method to solve problem
\eqref{model2}, based on a truncated expansion in terms of the singular
functions of the corresponding integral operator.
To improve stability, the singular system is not explicitly used in the
construction of the regularized solution, which is still represented as a
linear combination of the Riesz representers instead.
We prove that the coefficients of such regularized expansion are obtained by
applying the truncated eigenvalue decomposition to the initial ill-conditioned
linear system.
The truncation index is, in fact, a regularization parameter, which we
determine by different estimation approaches.
The effectiveness of the resulting solution method is confirmed by
numerical experiments, which involve both artificial examples and an integral
model reproducing the propagation of an electromagnetic field in the earth soil.

Reproducing kernel Hilbert spaces \cite{aronszajn} are a powerful and flexible
tool of functional analysis. They have been applied to many different fields,
such as numerical analysis \cite{cui}, optimization \cite{wahba2003},
statistics \cite{berlinet}, and machine learning \cite{cucker2002}.
In \cite{rs90,castro2012} they have been used in the numerical solution of
integral equations, in \cite{rs93a,sawano2008} to develop real inversion
methods for the Laplace transform, while \cite{evgeniou2000} discusses an
interesting application of reproducing kernels and radial basis functions to
machine learning problems.

In principle, the solution of \eqref{model2} could be handled by standard
projection methods using, e.g., splines or orthogonal polynomials.
Such an approach would produce, even in infinite arithmetics, an approximation
of the minimal-norm solution. On the contrary, the method here presented
constructs the exact solution to the problem. The approximation is introduced
in the algorithm by the floating point system and by the regularization
procedure.
Moreover, our method performs an implicit orthogonalization of the basis
functions which span the space containing the exact solution.
We note that a projection method for a particular system of integral equations
based on spline functions has been studied in \cite{dfrv19}.

We remark that a preliminary version of the procedure described in this paper,
still not completely motivated from a theoretical point of view, has been
applied by the same authors to the solution of a single equation in a specific
applicative context in \cite{dfpr21}.
The computation of minimal-norm solutions to nonlinear least-squares problems
is much more involved; see, e.g., \cite{pr20,pr21}.

The structure of the paper is as follows. In Section \ref{sec:preliminaries}, we
reformulate \eqref{model2} as a minimal-norm solution problem in
suitable Hilbert spaces. Then, in Section \ref{sec:method}, we develop a
solution method which leads to an ill-conditioned linear system, whose
regularized solution is characterized in Section~\ref{sec:regularization}.
In Section \ref{sec:tests}, we show the performance of our method by some
numerical examples, and in Section \ref{sec:case study} we conclude the paper
with the application of the proposed numerical approach to a geophysical model.

\section{Mathematical preliminaries}\label{sec:preliminaries}

\subsection{Statement of the problem}\label{sec:statement}

Let us consider problem \eqref{model2} and, from now on, let us assume that
$f_0=f_1=0$. This assumption does not affect the generality. Indeed, if it is
not fulfilled, by introducing the linear function
\begin{equation}\label{gamma}
\gamma(t) = \frac{b-t}{b-a} f_0 + \frac{t-a}{b-a} f_1,
\end{equation}
we can rewrite problem \eqref{model} into an equivalent one with vanishing
boundary conditions
\begin{equation}\label{model3}
\begin{cases}
\displaystyle
\int_a^b k_\ell(x,t) \, \xi(t) \dt = \varphi_\ell(x), \qquad 
\ell=1,\dots,m, \\
\xi(a)=0, \  \xi(b)=0,
\end{cases}
\end{equation}
where 
\begin{equation}\label{xi}
\begin{aligned}
\xi(t)=f(t)-\gamma(t), \qquad
\varphi_\ell(x) = g_\ell(x) - \int_a^b k_\ell(x,t) \, \gamma(t) \dt,
\end{aligned}
\end{equation}
are the new unknown function and right-hand side, respectively.

Let us now introduce the integral operators  
\begin{equation}\label{operator}
(K_\ell f)(x):= \int_a^b k_\ell(x,t) \, f(t) \dt, \qquad \ell=1,\dots,m,
\end{equation}
so that problem \eqref{model2} can be written as
\begin{equation}\label{modeloperator1}
\begin{cases}
\displaystyle
(K_\ell f)(x_{\ell,i}) = g_\ell(x_{\ell,i}), 
\qquad \ell=1,\dots,m,
\quad i=1,\dots,n_\ell, \\
f(a)=0, \  f(b)=0,
\end{cases}
\end{equation}
or, equivalently,
\begin{equation}\label{modeloperator}
\begin{cases}
\displaystyle
\bm{K}f = \bm{g}, \\
f(a)=0, \ f(b)=0,
\end{cases}
\end{equation}
where  
\begin{equation}\label{Kfg}
\bm{K}f= \begin{bmatrix}
\bm{K}_1f \\ \vdots \\ \bm{K}_mf
\end{bmatrix}, \qquad 
\bm{g} = \begin{bmatrix}
\bm{g}_1 \\ \vdots \\ \bm{g}_m
\end{bmatrix}, 
\end{equation}
and
$$
\begin{aligned}
\bm{K}_\ell f&=[(K_\ell f)(x_{\ell,1}),\dots,(K_\ell f)(x_{\ell,n_\ell})]^T, \\
\bm{g}_\ell&=[g_\ell(x_{\ell,1}),\dots,g_\ell(x_{\ell,n_\ell})]^T, 
\end{aligned}
$$
are vectors in $\RR^{n_\ell}$ for $\ell=1,\dots,m$. 

As already remarked in Section \ref{sec:introduction}, the above problem is 
ill-posed.
If the right-hand side does not belong to the range of the operator the
solution does not exist; this happens, in particular, when the data are
affected by errors. Moreover, the solution is not unique.
Because of this, we reformulate \eqref{modeloperator} in terms 
of the following least-squares problem
\begin{equation}\label{leastsquare}
\min_{f} \|\bm{K} f-\bm{g}\|_2^2,
\end{equation}
where $\| \cdot\|_2$ is the standard Euclidean norm.
Problem \eqref{leastsquare} has infinitely many solutions and among them we
look for a function $f(t)$ which satisfies
\begin{equation}\label{minfs}
\min \int_a^b \left(f''(t) \right)^2 \dt.
\end{equation}
We note that the curvature of a function $f$ at $t\in[a,b]$ is given by
$f''(t)(1+f'(t)^2)^{-3/2}$. If $f'$ is relatively small on the interval, then
\eqref{minfs} approximates the total curvature of the function on $[a,b]$, and
its minimization promotes the determination of a smooth solution.

In the space of square-integrable functions, this solution may not be unique.
It is necessary to introduce a suitable function space in which \eqref{minfs}
represents a strictly convex norm. In this way, the uniqueness of the solution
is ensured.

%
\begin{remark}\rm
Let us observe that in case $f$ does not satisfy homogeneous boundary
conditions, so that we have to reformulate the original problem  as
\eqref{model3}, from \eqref{gamma} and \eqref{xi}, we obtain
$$
\min \int_a^b \left(f''(t) \right)^2 \dt = \min \int_a^b \left( \xi''(t) 
\right)^2 \dt.
$$
This means that, after collocation, selecting the solution $f$ of \eqref{model2}
satisfying \eqref{minfs} corresponds to computing the minimal-norm solution of
\eqref{modeloperator1} in a suitable Hilbert space.
\end{remark}

\begin{remark}\rm
Approximating the solution of a problem by a smooth function is rather common
in applied mathematics. For example, 
let $a=x_0<x_1<\cdots<x_n=b$ and $y_i\in\RR$, $i=0,\ldots,n$, be given. 
The function $f$ such that $f(x_i)=y_i$, $i=0,\ldots,n$, is said to be an
interpolant.
It is well known \cite[Theorem~2.4.1.5]{sb91} that the interpolant which
minimizes \eqref{minfs} over all functions with absolutely continuous first
derivative and second derivative in $L^2[a,b]$ is an interpolating natural
cubic spline $s(x)$.
Here ``natural'' means that $s''(a)=s''(b)=0$.
We will show in Section~\ref{sec:method} that the smoothest solution of
\eqref{modeloperator1} can be  uniquely represented
as the expansion of basis functions depending upon the integral
operators $K_\ell$ and the collocation points $x_{\ell,i}$, for
$\ell=1,\ldots,m$ and $i=1,\ldots,n_\ell$.
\end{remark}

\subsection{Function spaces}\label{sec:funcspac}

Let us now introduce a function space for the solution of such a 
problem. 
Let $L^2$ be the Hilbert space of square-integrable functions $f: [a,b]
\rightarrow \RR$, equipped with the inner product
\begin{equation*}
\langle f,g \rangle_{L^2}=\int_a^b f(x) g(x) \dx,
\end{equation*}
and the induced norm
\begin{equation*}
\|f\|_{L^2}=\sqrt{\langle f,f \rangle_{L^2}}.
\end{equation*}
Let us also define the Hilbert space
\begin{equation*}
W= \{ f \in L^2: f(a)=f(b)=0,  f,f' \in AC([a,b]),  f'' \in L^2\},
\end{equation*}
where $AC([a,b])$ denotes the set of all functions $f$ that are absolutely
continuous on $[a,b]$, with inner product 
\begin{equation}\label{normW}
\langle f,g \rangle_{W} =
\langle f'',g'' \rangle_{L^2},
\end{equation}
and induced norm 
$$
\|f\|_W = \|f''\|_{L^2}.
$$
This is a norm for $W$. Indeed, $\|f\|_W=0$ if and only if
$f$ is a linear function (see, e.g., \cite[Section~2.4.1]{sb91}) and $f\in W$
implies $f(a)=f(b)=0$, so that $f\equiv 0$.

The space $W$ is a \emph{reproducing kernel Hilbert space} (RKHS). This means 
that each function $f$ belonging to $W$ can be written as
\begin{equation}\label{f}
f(y)=\langle G_y,f \rangle_W,
\end{equation}
where $G: [a,b]\times [a,b] \rightarrow \RR$ is a known bivariate function  such
that, for any $y \in [a,b]$,
$$G_y(x)\in W.$$
The function $G$ is called the \emph{reproducing kernel}.
Its expression is given by
$$
G(x,y) = G_y(x) = \int_a^b G_x''(z) G_y''(z)\dz,
$$ where
$$
G_y''(z)=\frac{\partial^2 G_y(z)}{\partial z^2}=
\begin{cases}
\dfrac{(z-a)(y-b)}{b-a}, & \quad a \leq z <y, \\
\dfrac{(y-a)(z-b)}{b-a}, & \quad y \leq z \leq b. \\
\end{cases}
$$
It is easy to check that from \eqref{normW} and \eqref{f} it follows 
\begin{equation*}
f(y)= \int_a^b G_y''(z) f''(z)\dz.
\end{equation*}
Further examples and properties of reproducing kernels
can be found in \cite{aronszajn,hille,yosida2}.

\subsection{Riesz theory}\label{sec:rieszth}

Let us now consider problem \eqref{modeloperator} in $W$. This means that the 
bounded linear functional $\bm{K}$ is such that
$$
\begin{aligned}
\bm{K}   :\  & W && \longrightarrow && \RR^{N_m} \\
& f && \longmapsto && \bm{K}  f,
\end{aligned}
$$
with
\begin{equation}\label{Kfj}
(\bm{K} f)_j =(K_\ell f)(x_{\ell,i}), \quad j=i+N_{\ell-1},
\quad \quad N_r=\sum_{k=1}^r n_k,
\end{equation}
$\ell=1,\ldots,m$, $i=1,\ldots,n_\ell$, and $N_0=0$.

By the Riesz representation theorem~\cite{yosida2}, there exist $N_m$ functions 
$\{\eta_{j}\}_{j=1}^{N_m} \in W$, named \emph{Riesz representers}, such that
the $j$th component of the array 
$ \bm{K} f$ is given by
\begin{equation}\label{Riesz}
(\bm{K} f)_j=\langle \eta_{j},f \rangle_{W}, \quad j=1,\ldots,N_m.
\end{equation}
Moreover, let us denote by $\bm{K}^*: \RR^{N_m} \to W$ the adjoint 
operator of $\bm{K}$, defined by
\begin{equation}\label{propadjoint}
\langle \bm{K} f, \bm{g} \rangle_2 = \langle f,\bm{K}^* \bm{g} \rangle_{W},
\end{equation}
where $\bm{g}\in\RR^{N_m}$ and $\langle \cdot, \cdot \rangle_2$ is the usual
Euclidean inner product in $\RR^{N_m}$.
Let us also introduce the null space of $\bm{K}$  
$$ \cN(\bm{K} )= \{f \in W : \bm{K} f=\bm{0} \},$$
and  its orthogonal complement
$$ \cN(\bm{K})^{\perp}= \{f \in W : \langle f,g \rangle_{W} = 0,  
\forall g \in \cN(\bm{K}) \}.$$
The latter space is spanned by the Riesz representers, as the following lemma
states.

\begin{lemma}\label{lemma1}
Let $\bm{K}$ be a bounded linear operator from a Hilbert space to a 
finite-dimensional Hilbert space, then $\cN(\bm{K})^\perp$ coincides 
with the range of the adjoint operator $\cR(\bm{K}^*)$
\begin{equation*}
\cN(\bm{K})^\perp = \cR(\bm{K}^*) =\{f \in W  :  
f=\bm{K}^* \bm{g}\ \text{ for }\ \bm{g} \in \RR^{N_m} \},
\end{equation*}
and, in addition, 
$$
\cN(\bm{K})^\perp= \Span \{ \eta_{1},  \dots, \eta_{N_m} \}.
$$
\end{lemma}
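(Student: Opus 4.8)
The plan is to establish the two asserted equalities in turn, treating $\cN(\bm{K})^\perp = \cR(\bm{K}^*)$ first and then identifying $\cR(\bm{K}^*)$ with the span of the Riesz representers. For the first equality I would rely on the standard orthogonality relation between an operator and its adjoint, exploiting the hypothesis that the codomain is finite-dimensional. The starting point is the identity $\cN(\bm{K}) = \cR(\bm{K}^*)^\perp$. To see this, observe that $f \in \cR(\bm{K}^*)^\perp$ means $\langle f, \bm{K}^* \bm{g} \rangle_W = 0$ for every $\bm{g} \in \RR^{N_m}$; by the defining property \eqref{propadjoint} of the adjoint this is equivalent to $\langle \bm{K} f, \bm{g} \rangle_2 = 0$ for all $\bm{g}$, which holds precisely when $\bm{K} f = \bm{0}$, that is, when $f \in \cN(\bm{K})$.

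Taking orthogonal complements on both sides of $\cN(\bm{K}) = \cR(\bm{K}^*)^\perp$ yields
$$
\cN(\bm{K})^\perp = \left(\cR(\bm{K}^*)^\perp\right)^\perp = \overline{\cR(\bm{K}^*)},
$$
where I use the general Hilbert-space fact that the double orthogonal complement of a subspace equals its closure. Here the finite dimensionality of the target space is essential: since $\bm{K}^*$ maps $\RR^{N_m}$ into $W$, its range is a finite-dimensional, hence closed, subspace of $W$, so the closure may be dropped and $\cN(\bm{K})^\perp = \cR(\bm{K}^*)$. This is the one place where the dimensional hypothesis enters, and it is also the only subtle point of the argument, since in the general (infinite-dimensional codomain) case only $\cN(\bm{K})^\perp = \overline{\cR(\bm{K}^*)}$ would hold.

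For the second equality I would compute the adjoint explicitly in terms of the Riesz representers. Writing $\bm{g} = (g_1,\dots,g_{N_m})^T$ and using \eqref{Riesz},
$$
\langle \bm{K} f, \bm{g} \rangle_2 = \sum_{j=1}^{N_m} g_j (\bm{K} f)_j = \sum_{j=1}^{N_m} g_j \langle \eta_{j}, f \rangle_W = \Big\langle \sum_{j=1}^{N_m} g_j \eta_{j},\, f \Big\rangle_W .
$$
Comparing with \eqref{propadjoint} and using the symmetry of the real inner product on $W$ gives the closed form $\bm{K}^* \bm{g} = \sum_{j=1}^{N_m} g_j \eta_{j}$. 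As $\bm{g}$ ranges over $\RR^{N_m}$, the right-hand side ranges over all linear combinations of the $\eta_{j}$, so $\cR(\bm{K}^*) = \Span\{\eta_{1},\dots,\eta_{N_m}\}$. Chaining this with the first equality completes the proof.

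I do not expect a serious obstacle: the reasoning is a routine adjoint computation combined with the closed-range consequence of finite dimensionality. The only step deserving careful statement is precisely that finite-dimensionality argument, because it is what licenses replacing $\overline{\cR(\bm{K}^*)}$ by $\cR(\bm{K}^*)$ and hence using the span of the Riesz representers directly, without closures, in the subsequent construction of the solution method.
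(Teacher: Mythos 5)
Your proof is correct and follows essentially the same route as the paper: both arguments reduce the first equality to $\cN(\bm{K})^\perp = \overline{\cR(\bm{K}^*)}$ plus the observation that a finite-dimensional range is closed, and both obtain the second equality by computing $\bm{K}^*\bm{g}$ explicitly via \eqref{Riesz} as a linear combination of the $\eta_j$. The only difference is cosmetic: the paper cites \cite[Theorem 3.3.2]{Groetsch} for the orthogonality relation, whereas you derive it directly from \eqref{propadjoint}, and you state the converse inclusion $\Span\{\eta_1,\dots,\eta_{N_m}\}\subseteq\cR(\bm{K}^*)$ slightly more explicitly.
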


\begin{proof}
In \cite[Theorem 3.3.2]{Groetsch} it is proved that
$\cN(\bm{K})^\perp = \overline{\cR(\bm{K}^*)}$. 
In our case, $\cR(\bm{K}^*)$ is finite-dimensional, so the closure is
not needed.
For any $f\in W$ and $\bm{g}\in\RR^{N_m}$, we have
$$
\langle \bm{K} f, \bm{g} \rangle_2 = \sum_{\ell=1}^{m} \langle \bm{K}_\ell
f, \bm{g}_\ell \rangle_2 = \sum_{\ell=1}^{m} \sum_{i=1}^{n_\ell} (K_\ell 
f)(x_{\ell,i})
\,g_\ell(x_{\ell,i}).
$$
Then, by combining \eqref{Kfj} and \eqref{Riesz}, we can assert
\begin{equation*}
\begin{aligned}
\langle \bm{K} f, \bm{g} \rangle_2 &= \sum_{\ell=1}^{m} \sum_{i=1}^{n_\ell}
\langle \eta_{i+N_{\ell-1}},f \rangle_{W} \,  g_\ell(x_{\ell,i}) \\
&= \left \langle f, \sum_{\ell=1}^{m} \sum_{i=1}^{n_\ell} g_\ell(x_{\ell,i}) \,
\eta_{i+N_{\ell-1}}
\right \rangle_{W} = \left \langle f, \bm{K}^* \bm{g} \right \rangle_{W},
\end{aligned}
\end{equation*}
where the last equality follows by virtue of \eqref{propadjoint}.
This shows that any function in the range of $\bm{K}^*$ can be
expressed as a linear combination of the Riesz representers $\eta_j$,
$j=1,\ldots,N_m$.
\end{proof}

\section{Computing the minimal-norm solution}\label{sec:method}

In this section, we develop a projection method to compute the minimal-norm
solution of \eqref{modeloperator}.
As a consequence of Lemma~\ref{lemma1}, such a solution can be
expressed as a linear combination of the Riesz representers, as the following
theorem shows.

\begin{theorem}\label{teo1}
The minimal-norm solution $f^\dagger$ of~\eqref{modeloperator} is given by
\begin{equation}\label{normalphi}
f^\dagger = \sum_{\ell=1}^m 
\sum_{i=1}^{n_\ell} c_{i+N_{\ell-1}}  \eta_{\ell,i},
\end{equation}
with $\eta_{\ell,i}:=\eta_{i+N_{\ell-1}}$.
\end{theorem}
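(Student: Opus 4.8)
The plan is to reduce the statement to the classical Hilbert-space characterization of a minimal-norm least-squares solution, and then to invoke Lemma~\ref{lemma1}. The key observation is that the quantity minimized in \eqref{minfs} is exactly $\|f\|_W^2$, so solving \eqref{leastsquare} subject to \eqref{minfs} amounts to finding the element of smallest $W$-norm among all minimizers of $\|\bm{K}f-\bm{g}\|_2$. Since $\bm{K}$ is a bounded operator from $W$ into the finite-dimensional space $\RR^{N_m}$, both its range and its null space are closed: the range is closed because it is finite-dimensional, and $\cN(\bm{K})$ is closed because it is the preimage of $\{\bm{0}\}$ under a continuous map.

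First I would establish existence of least-squares solutions. Because $\cR(\bm{K})$ is closed, the Euclidean projection of $\bm{g}$ onto $\cR(\bm{K})$ exists and lies in the range, so there is at least one $f_p\in W$ attaining the minimum in \eqref{leastsquare}. The full set of minimizers is then the closed affine subspace $f_p+\cN(\bm{K})$, since $\bm{K}(f_p+h)=\bm{K}f_p$ precisely when $h\in\cN(\bm{K})$.

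Next I would single out the minimal-norm element. Using the orthogonal decomposition $W=\cN(\bm{K})\oplus\cN(\bm{K})^\perp$, I write $f_p=u+v$ with $u\in\cN(\bm{K})^\perp$ and $v\in\cN(\bm{K})$; then $f_p+\cN(\bm{K})=u+\cN(\bm{K})$, and for every $h\in\cN(\bm{K})$ the Pythagorean identity gives $\|u+h\|_W^2=\|u\|_W^2+\|h\|_W^2\geq\|u\|_W^2$. Hence $f^\dagger:=u$ is the unique minimizer of \eqref{minfs} over the solution set, uniqueness being guaranteed because $\|\cdot\|_W$ is a genuine (strictly convex) Hilbert-space norm, as verified in Section~\ref{sec:funcspac}.

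Finally, the conclusion follows from Lemma~\ref{lemma1}: since $f^\dagger\in\cN(\bm{K})^\perp=\Span\{\eta_1,\dots,\eta_{N_m}\}$, it can be written as $f^\dagger=\sum_{j=1}^{N_m}c_j\eta_j$, and the reindexing $j=i+N_{\ell-1}$ together with $\eta_{\ell,i}:=\eta_{i+N_{\ell-1}}$ yields exactly \eqref{normalphi}. I do not expect a genuine obstacle: the whole argument is the textbook minimal-norm least-squares construction, and the only points requiring care — closedness of $\cR(\bm{K})$ and $\cN(\bm{K})$, and strict convexity of the norm — are all immediate in this finite-data, RKHS setting. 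The one nonstandard ingredient, namely the identification of $\cN(\bm{K})^\perp$ with the span of the Riesz representers, has already been supplied by Lemma~\ref{lemma1}.
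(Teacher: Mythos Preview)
Your argument is correct and follows the same route as the paper: the minimal-norm least-squares solution lies in $\cN(\bm{K})^\perp$, and Lemma~\ref{lemma1} then yields the expansion~\eqref{normalphi}. The paper's proof simply asserts $f^\dagger\in\cN(\bm{K})^\perp$ without further comment, whereas you supply the standard justification via the orthogonal decomposition $W=\cN(\bm{K})\oplus\cN(\bm{K})^\perp$; both arrive at the same conclusion by the same mechanism.
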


\begin{proof}
Since the minimal-norm solution $f^\dagger$ belongs to $\cN(\bm{K})^\perp$, 
from Lemma~\ref{lemma1} we can write 
\begin{equation*}
f^\dagger = \sum_{j=1}^{N_m} c_j \eta_j = \sum_{\ell=1}^m 
\sum_{i=1}^{n_\ell} c_{i+N_{\ell-1}}  \eta_{\ell,i}, \qquad 
\text{with } \eta_{\ell,i}:=\eta_{i+N_{\ell-1}}.
\end{equation*}
\end{proof}

The Riesz representers are functions in the space $W$, so we have
\begin{equation}\label{eta}
\eta_{\ell,i}(t) = \langle G_t,\eta_{i+N_{\ell-1}} \rangle_W
\quad\text{and}\quad \eta_{\ell,i}(a)=\eta_{\ell,i}(b)=0.
\end{equation}
Given the definition \eqref{normW} of the inner product,
to obtain the Riesz representers $\eta_{\ell,i}(t)$ the
expressions of their second derivatives $\eta_{\ell,i}''$ are needed, for
$\ell=1,\dots,m$ and $i=1,\ldots,n_\ell$.
To this end, we consider \eqref{operator} and write the unknown function $f$ by
\eqref{f}
\begin{align*}
(K_\ell f)(x_{\ell,i}) & = \int_a^b k_\ell(x_{\ell,i},t) \int_a^b G_t''(z)
\, f''(z)  \dz \dt  \\ 
& =\int_a^b f''(z) \int_a^b G_t''(z) \, k_\ell(x_{\ell,i},t) \dt \dz,
\end{align*}
from which, by \eqref{Riesz}, we deduce
\begin{equation}\label{etasecondo}
\eta_{\ell,i}''(z) =
\int_a^b  G_t''(z) \, k_\ell(x_{\ell,i},t) \dt, 
\end{equation}
for $\ell=1,\dots,m$ and $i=1,\ldots,n_\ell$.

Let us mention that, depending on the expression of the kernels $k_\ell$, the
above integrals may be analytically computed.
Whenever this is not possible, we employ a Gaussian quadrature formula of
suitable order to approximate \eqref{etasecondo}.
The following two examples illustrate both situations.
Here, we assume $m=2$, $n_1=n_2=n$, so that $N_m=2n$, and $x_{1,i}=x_{2,i}=x_i$,
for $i=1,\ldots,n$.

\begin{example}\label{example1}
\rm
Let us consider the system of integral equations
\begin{equation}\label{ex1}
\begin{cases}
\displaystyle
\int_0^{1} \frac{x}{t+1}  f(t) \dt  =  x\left(\log{4}-\frac{1}{2}\right), \\
\displaystyle
\int_0^{1\strut} \cos{(xt)}  f(t) \dt  = 
\frac{2}{x^3}\left(x\cos{x} + (x^2-1)\sin{x} \right),
\end{cases}
\end{equation}
with $x\in(0,1]$, whose exact solution is $f(t)= t^2+1$.
We introduce the function \eqref{gamma}
\[
\gamma(t)=t+1,
\]
to reformulate the original problem as the following one
\begin{equation*}
\begin{cases}
\displaystyle
\int_0^{1} \frac{x}{t+1}  \xi(t) \dt  =  
x\left(\log{4}-\frac{3}{2}\right), \\
\displaystyle
\int_0^{1\strut} \cos{(xt)}  \xi(t) \dt  =
\frac{1}{x^2}\left(\cos{x} +1 - \frac{2\sin{x}}{x} \right),
\end{cases}
\end{equation*}
where $\xi(t)=f(t)-\gamma(t)$ satisfies homogeneous boundary conditions.

From \eqref{etasecondo}, after some computation, we obtain, for $i=1,\ldots,n$,
\begin{align}\label{eta1S}
\eta_{1, i}''(z) &= x_{i} \left[ (1-z)\log(1+z) -z \log\left(\frac{4}{(1+z)^2} 
\right) \right], \\
\eta_{2, i}''(z) &= \frac{1}{x_{i}^2} \left(z\cos{x_{i}} - \cos{(x_{i}z)}-z+1
\right).
\label{eta2S}
\end{align}
Then, from \eqref{eta},
\begin{align}\label{eta1}
\eta_{1, i}(y) &= \frac{x_{i}}{36} \Bigl\{ 6(1+y)^3 \log{(1+y)} -y \left[ y^2 
(5+12\log{2} ) +15y+4 (9\log{2} -5) \right] \Bigr\}, \\
\eta_{2, i}(y) &= \frac{y(y-1)}{6x_{i}^2}\bigl[ (y+1)\cos{(x_{i})} -y+2 \bigr]
+ \frac{1}{x_i^4} \bigl[ y(1-\cos{(x_{i})})-1+\cos{(x_{i}y)} \bigr].
\label{eta2}
\end{align}

Figure~\ref{fig1} displays, in the top row, the functions $\eta_{1,i}$ (on the 
left) and $\eta''_{1,i}$ (on the right), while the bottom row depicts the
functions $\eta_{2,i}$ (on the left) and $\eta''_{2,i}$ (on the right) for
different collocation points $x_{\ell,i}$.
We see from Figure~\ref{fig1} that the Riesz functions satisfy the boundary
conditions, i.e., $\eta_{\ell,i}(0) = \eta_{\ell,i}(1) = 0$,
for $\ell = 1, 2$ and $i = 1,\ldots,5$.
From the same figure we can observe that, in this case, it also holds
$\eta''_{\ell,i}(0) = \eta''_{\ell,i}(1) = 0$.

\begin{figure}[ht] \centering
\includegraphics[width=.46\textwidth]{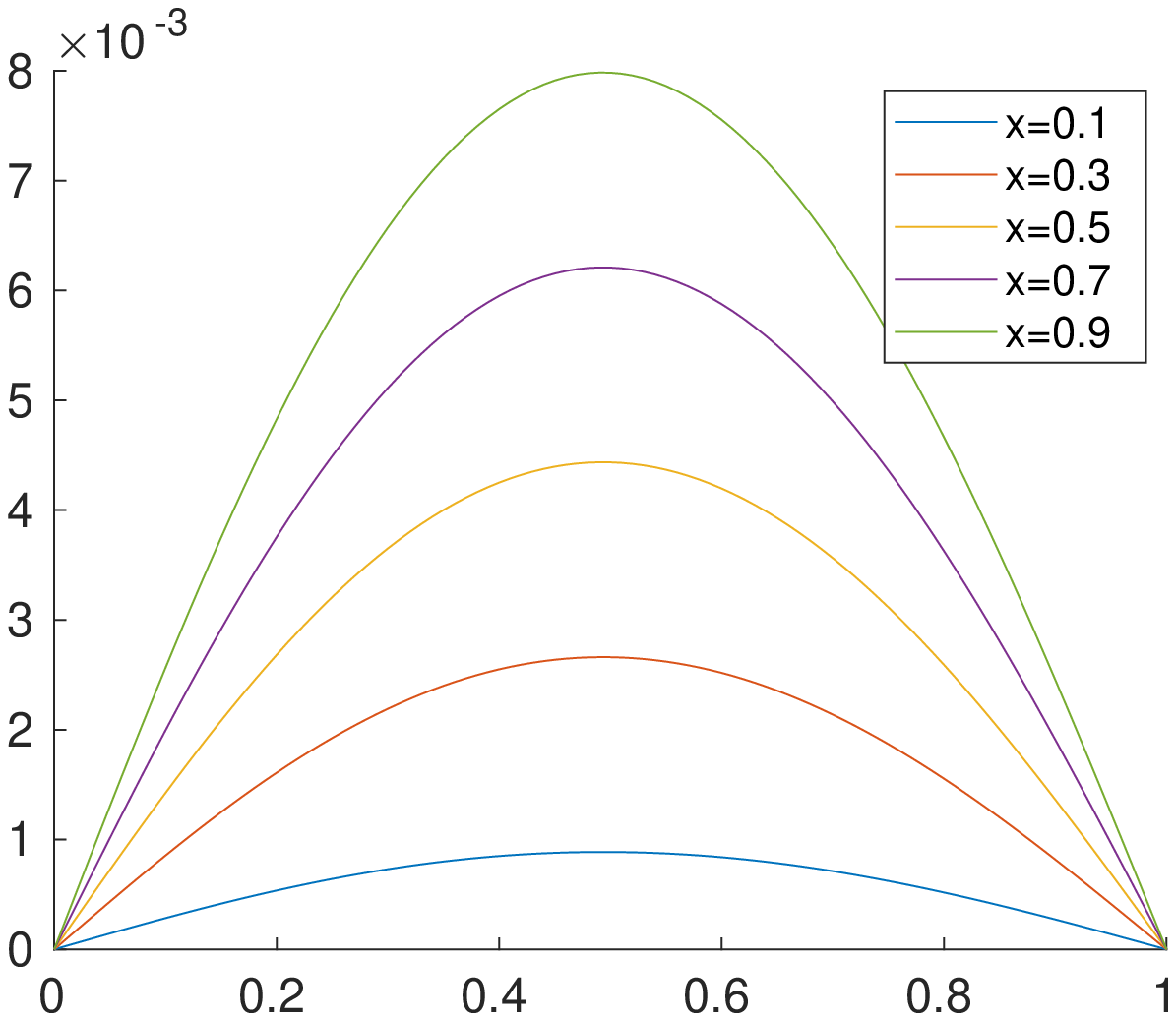}\hfill
\includegraphics[width=.46\textwidth]{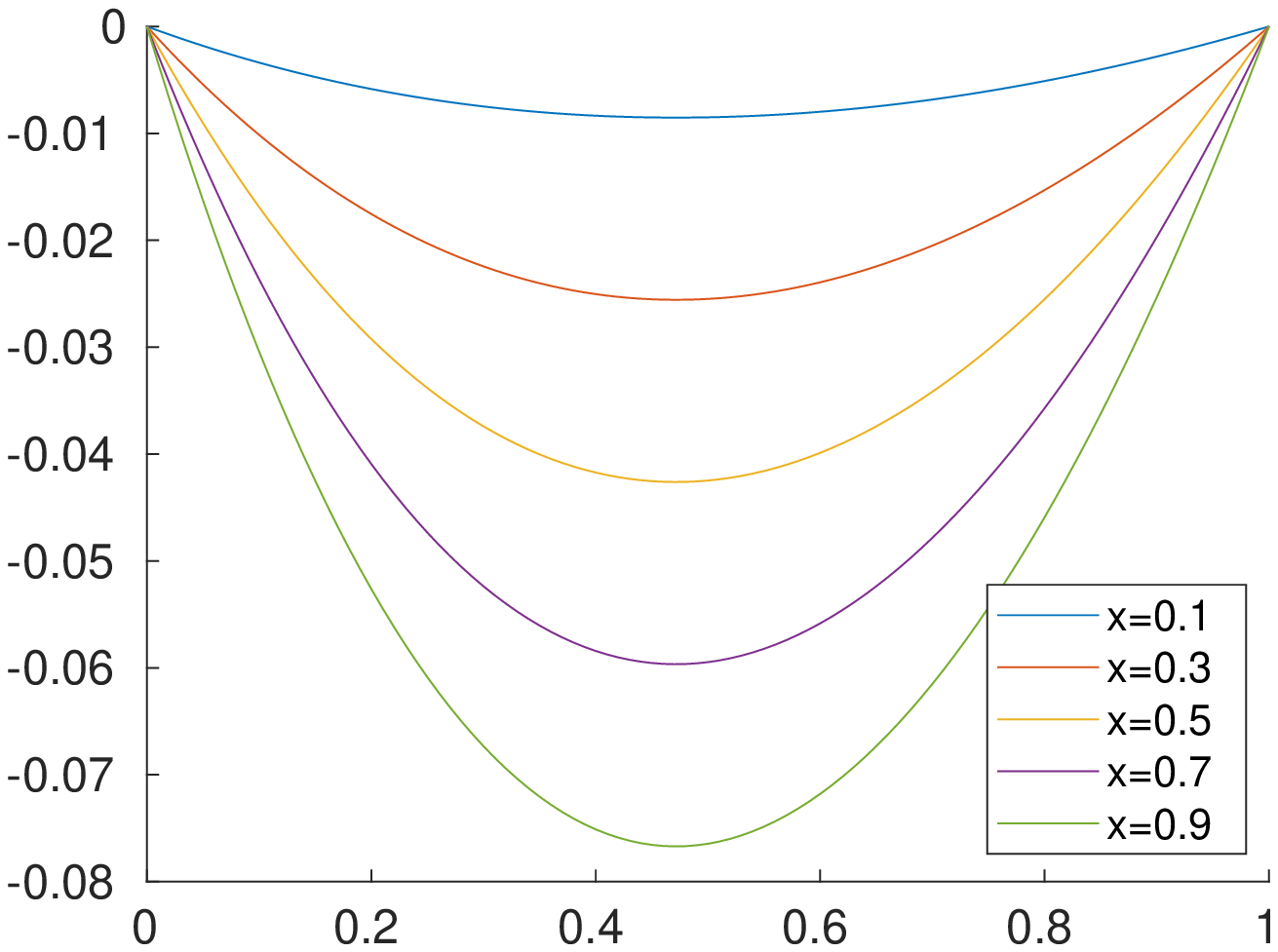}\\ \vspace{.5cm}
\includegraphics[width=.46\textwidth]{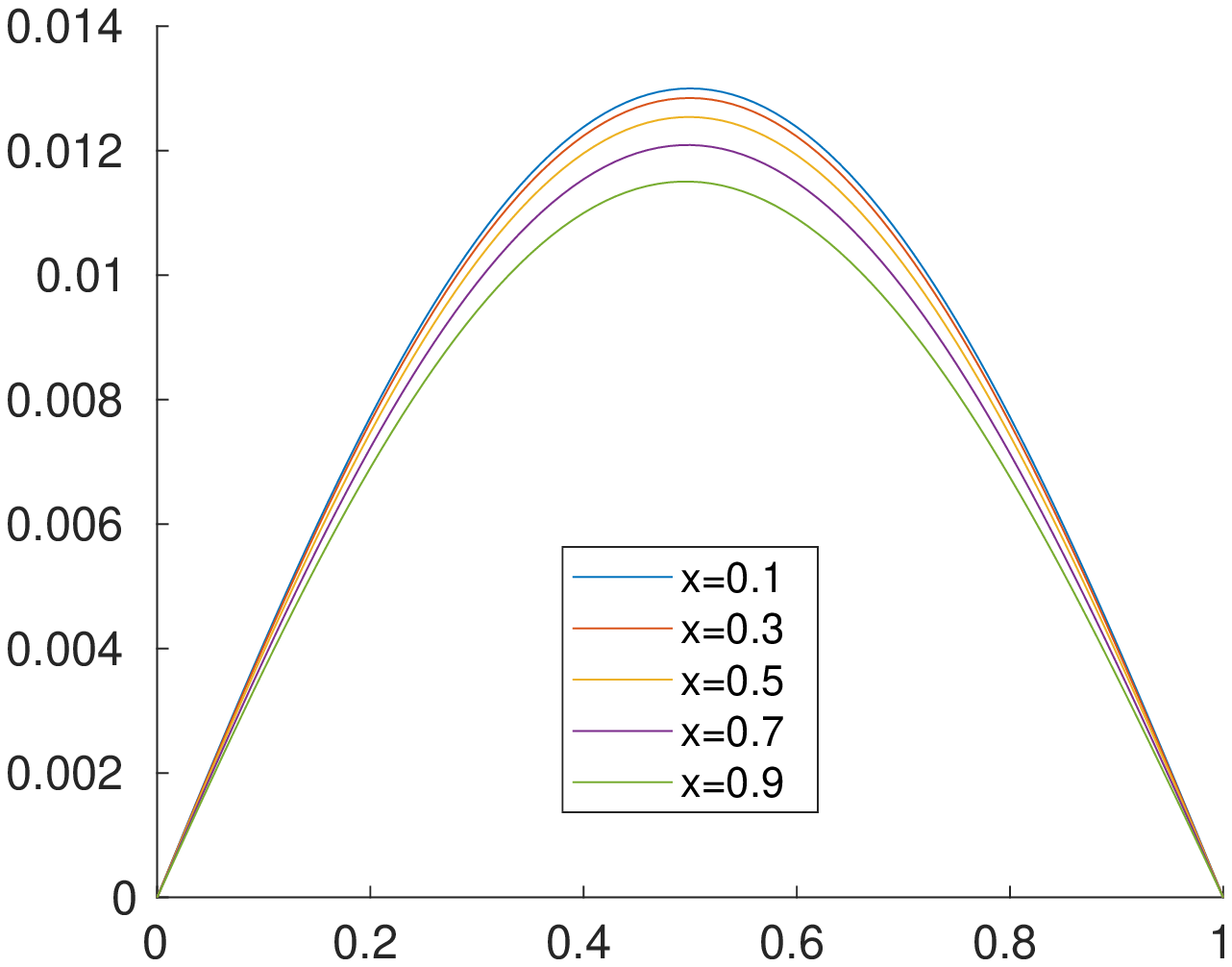}\hfill
\includegraphics[width=.46\textwidth]{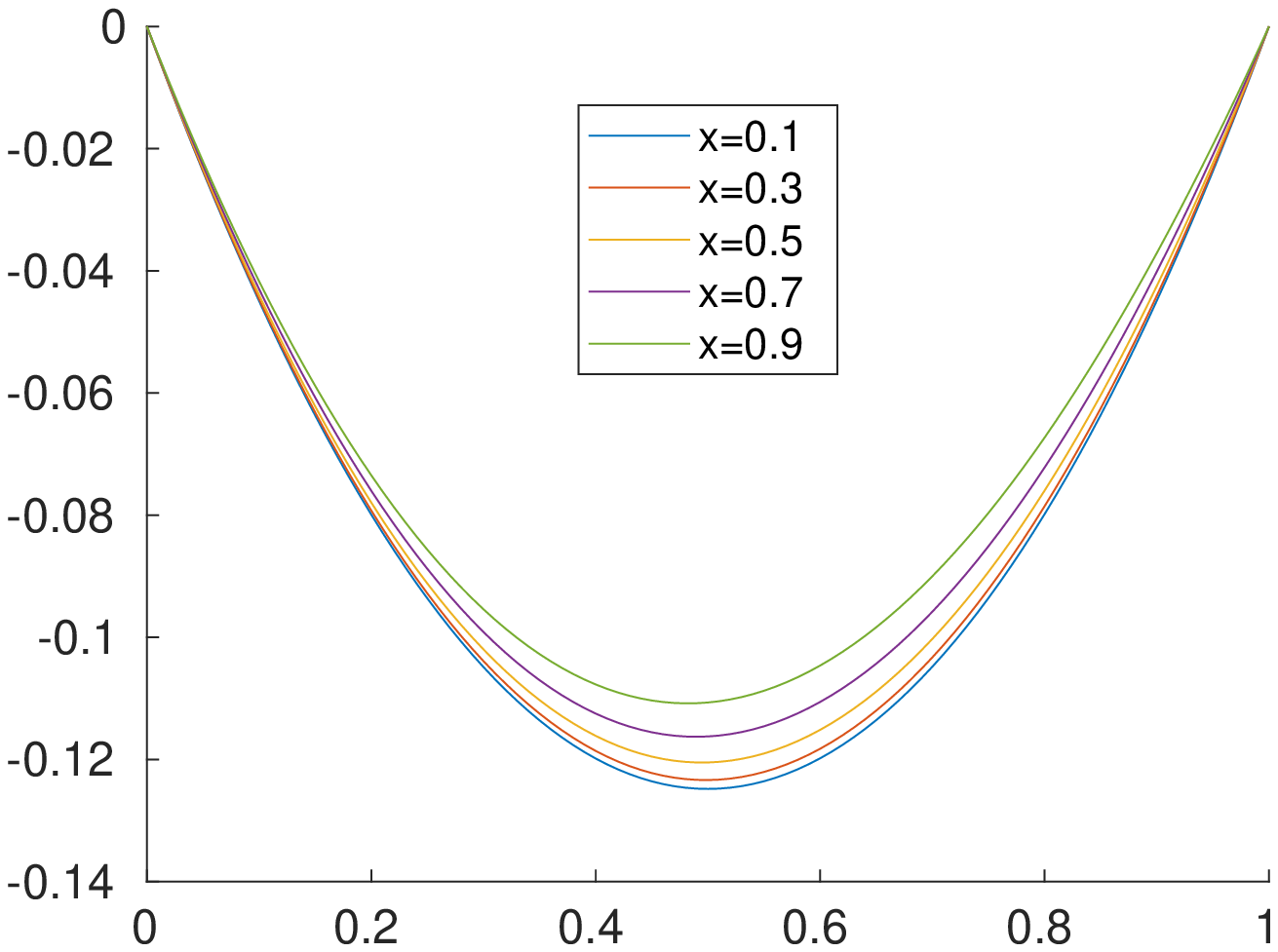}
\caption{Riesz functions for the system \eqref{ex1}: $\eta_{1,i}$ (top-left),
$\eta''_{1,i}$ (top-right), $\eta_{2,i}$ (bottom-left), and $\eta''_{2,i}$
(bottom-right), with $x_i = 0.1+0.2(i-1)$ for $i = 1,\ldots,5$.}
\label{fig1}
\end{figure}

\end{example}

\begin{example}\label{exbaart}
\rm
Let us consider the system
\begin{equation}\label{sysbaart}
\begin{cases}
\displaystyle
\int_0^{\pi} \ee^{x \cos{t}}  f(t) \dt  = 2 \frac{\sinh{x}}{x}, \\
\displaystyle
\int_0^{\pi\strut} (xt+\ee^{xt})  f(t) \dt  = \pi x+\frac{1+\ee^{\pi x}}{1+x^2},
\end{cases}
\end{equation}
with $x\in(0,\pi/2]$, whose exact solution is $f(t)=\sin t$. This system has
been obtained by coupling the well-known Baart test problem \cite{Hansentools}
with another equation having the same solution.

From \eqref{etasecondo} we have, for $i=1,\ldots,n$,
\begin{equation}\label{eta2Sex2}
\eta_{2, i}''(z) = \frac{z(1-\ee^{\pi x_{i}})}{\pi x_{i}^2} + \frac{x_{i} 
z(z^2-\pi^2)}{6} + \frac{\ee^{x_{i}z}-1}{x_{i}^2},
\end{equation}
and from~\eqref{eta}
\begin{equation}\label{eta2ex2}
\begin{aligned}
\eta_{2, i}(y) &= \frac{\pi^2x_{i}y}{36}\left(\frac{7}{10}\pi^2-y^2 \right) + 
\frac{y}{6\pi x_{i}^4}(1-\ee^{\pi x_{i}})(x_{i}^2y^2+6) \\
&\phantom{=\ } +\frac{\pi y}{6x_{i}^2}(\ee^{\pi 
x_{i}}+2) + \frac{y^2}{2}\left( \frac{x_{i}y^3}{60}-\frac{1}{x_{i}^2}\right)+ 
\frac{\ee^{x_{i}y}-1}{x_{i}^4}.
\end{aligned}
\end{equation}
The functions $\eta_{1, i}''(z)$ and $\eta_{1, i}(y)$ do not have an analytic
representation, so they should be approximated by a quadrature formula.
\end{example}

Let us now compute the coefficient of the expansion
\eqref{normalphi} of the minimal-norm solution.
By replacing $f$ in \eqref{modeloperator1} by \eqref{normalphi}, we obtain
\begin{equation*}
(K_\ell f^\dagger)(x_{\ell,i}) = g_\ell(x_{\ell,i}), 
\qquad \ell=1,\dots,m, \quad i=1,\dots,n_\ell, 
\end{equation*}
namely,
\begin{equation*}
\sum_{\ell=1}^m \sum_{k=1}^{n_\ell}  (K_\ell \eta_{\ell,k})(x_{\ell,i}) 
c_{k+N_{\ell-1}} = g_\ell(x_{\ell,i}),
\end{equation*}
where $\eta_{\ell,k}:=\eta_{k+N_{\ell-1}}$ and the integers $N_\ell$ are
defined in \eqref{Kfj}.
By renumbering the Riesz representers, we obtain the square linear system
\begin{equation*}
\sum_{j=1}^{N_m} (K_\ell \eta_j)(x_{\ell,i}) c_j = g_\ell(x_{\ell,i}),
\qquad \ell=1,\dots,m, \quad i=1,\dots,n_\ell.
\end{equation*}

Taking into account \eqref{Riesz}, the above linear system can 
be written in matrix form as
\begin{equation}
\cG \bm{c} = \bm{g},
\label{linsystem}
\end{equation}
where $\bm{g}$ is defined in \eqref{Kfg} and $\bm{c}=[c_j]_{j=1}^{N_m}$ is the
vector of the unknowns.
The Gram matrix $\cG\in\RR^{N_m\times N_m}$ is defined as
\begin{equation}
\cG=\begin{bmatrix}
\cG^1 & \Gamma^{1,2} & \cdots & \Gamma^{1,m} \\
(\Gamma^{1,2})^T & \cG^2 & & \vdots \\
\vdots & & \ddots & \vdots \\
(\Gamma^{1,m})^T & \cdots & \cdots & \cG^m 
\end{bmatrix},
\label{grammatrix}
\end{equation}
where the entries of the $m$ diagonal blocks $\cG^\ell$,
$\ell=1,\dots,m$, are 
\begin{equation}\label{gramel1}
\cG^\ell_{i j}=\langle \eta_{\ell,i},\eta_{\ell,j} \rangle_W, 
\end{equation}
and the off-diagonal blocks $\Gamma^{\ell,k}$, with $\ell,k=1,\dots,m$,
$k>\ell$, have entries
\begin{equation}\label{gramel2}
\Gamma^{\ell,k}_{i j}=\langle \eta_{\ell,i},\eta_{k, j} \rangle_W,
\end{equation}
for $i=1,\dots,n_\ell$ and $j=1,\dots,n_k$.

The inner products in \eqref{gramel1} and \eqref{gramel2} involve the second
derivatives $\eta_{\ell,i}''$.
Whenever they can be computed analytically, the elements of the Gram matrix
$\cG$ can be obtained by symbolic computation; we used the \texttt{integral}
function of Matlab. If this is not possible, a Gaussian quadrature formula is
adopted.

As it is well-known, the Gram matrix $\cG$ defined in \eqref{grammatrix} is 
symmetric positive definite. Then, a natural approach for solving system
\eqref{linsystem} would be to apply Cholesky factorization. However, 
as this linear system results from the discretization of an
ill-posed problem, the matrix $\cG$ is severely ill-conditioned.
Since experimental data is typically contaminated by noise, the numerical
solution of \eqref{linsystem} is subject to strong error propagation
and can deviate substantially from the exact solution.
Moreover, the numerical computation of the
Cholesky factorization may lead to computing the square root of small negative
quantities, making it impossible to construct the Cholesky factor.

We adopted a different approach, consisting of
writing the Gram matrix in terms of its spectral factorization
\cite{stewart}
\begin{equation}
\cG= U \Lambda U^T,
\label{eig}
\end{equation}
where the diagonal matrix 
$\Lambda=\diag(\lambda_1,\lambda_2,\dots,\lambda_{N_m})$
contains the eigenvalues of $\cG$ 
sorted by decreasing value,
and $U=[\bm{u}_1,\dots,\bm{u}_{N_m}]$ is the eigenvector matrix with
orthonormal columns.

Then, by employing \eqref{eig} in system \eqref{linsystem}, we
obtain the following representation for the coefficients
\begin{equation}\label{etacoef}
\bm{c}=[c_1,\ldots,c_{N_m}]^T 
= \sum_{\ell=1}^{N_m} \frac{\bm{u}_\ell^T\bm{g}}{\lambda_\ell}\bm{u}_\ell,
\end{equation}
of the minimal-norm solution 
\begin{equation}\label{fc}
f^\dagger = \sum_{j=1}^{N_m} c_j \eta_j,
\end{equation}
resulting from Theorem~\ref{teo1}.

\section{Regularized minimal-norm solution}\label{sec:regularization}

The severe ill-conditioning of the matrix $\cG$ produces a strong error
propagation in \eqref{etacoef} and, consequently, in the solution \eqref{fc}.
A regularized solution is needed, instead.

In what follows, it is convenient to write $f^\dagger$ as a linear 
combination of orthonormal functions.
The orthonormalization of a family of functions is a classical topic in
functional analysis.
The properties arising from the orthogonalization of the translates of a given
function, and the connections of such process to the factorization of the
associated Gram matrix have been investigated
in~\cite{gmrs95,gmrs98}, and later generalized to multivariate functions
in~\cite{gmrs00}.
A review of the available algorithms for the spectral factorization of infinite
Gram matrices is contained in \cite{gmrs97}.

The following theorem shows how an orthonormal expansion for the minimal-norm
solution can be constructed by \eqref{eig}, and gives the expression of such
orthonormal functions which are, in fact, the singular 
functions~\cite{Engl,Kress99} of the integral operator $\bm{K}$.

\begin{theorem}\label{theoremort}
The minimal-norm solution $f^\dagger$ of~\eqref{modeloperator} can be written as
a linear combination of orthonormal functions $\widehat{\eta}_\ell$
\begin{equation}\label{etaort}
f^\dagger = \sum_{\ell=1}^{N_m} \widehat{c}_\ell \widehat{\eta}_\ell,
\end{equation}
where
\begin{equation}\label{etaort2}
\widehat{c}_\ell = \frac{\bm{u}_\ell^T \bm{g}}{\sqrt{\lambda_\ell}},
\qquad
\widehat{\eta}_\ell = \sum_{j=1}^{N_m} \frac{u_{j \ell}}{\sqrt{\lambda_\ell}} 
\eta_j,
\qquad \ell=1,\ldots,N_m,
\end{equation}
and $u_{j\ell}$ denotes the $j$th component of the eigenvector $\bm{u}_\ell$
with eigenvalue $\lambda_\ell$ in the spectral factorization~\eqref{eig}.
Moreover, the set of the triplets $\left\{ \sqrt{\lambda_\ell}, 
\widehat{\eta}_\ell, \bm{u}_\ell \right\}$, $\ell=1,\ldots,N_m$, is the
singular system of the operator $\bm{K}$~\eqref{modeloperator}.
\end{theorem}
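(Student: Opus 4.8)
The plan is to establish the theorem in three stages: first verify that the proposed expansion \eqref{etaort} reproduces the minimal-norm solution, then prove that the functions $\widehat{\eta}_\ell$ are orthonormal in $W$, and finally identify the triplets $\{\sqrt{\lambda_\ell},\widehat{\eta}_\ell,\bm{u}_\ell\}$ as the singular system of $\bm{K}$. Throughout I would use that $\cG$ is symmetric positive definite, so every $\lambda_\ell>0$ and the divisions by $\sqrt{\lambda_\ell}$ in \eqref{etaort2} are legitimate.

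For the first stage I would substitute the definitions \eqref{etaort2} into the right-hand side of \eqref{etaort} and interchange the order of summation, obtaining
$$
\sum_{\ell=1}^{N_m}\widehat{c}_\ell\,\widehat{\eta}_\ell
= \sum_{j=1}^{N_m}\left(\sum_{\ell=1}^{N_m}\frac{(\bm{u}_\ell^T\bm{g})\,u_{j\ell}}{\lambda_\ell}\right)\eta_j .
$$
The inner sum is precisely the $j$th component of the coefficient vector $\bm{c}$ in \eqref{etacoef}, so by Theorem~\ref{teo1} this equals $f^\dagger=\sum_j c_j\eta_j$. This step is routine. For the orthonormality I would compute the inner product directly from \eqref{etaort2} together with the Gram entries \eqref{gramel1}--\eqref{gramel2}, namely $\cG_{ij}=\langle\eta_i,\eta_j\rangle_W$, which yields
$$
\langle\widehat{\eta}_\ell,\widehat{\eta}_k\rangle_W
= \frac{1}{\sqrt{\lambda_\ell\lambda_k}}\,\bm{u}_\ell^T\,\cG\,\bm{u}_k .
$$
Since $\cG\bm{u}_k=\lambda_k\bm{u}_k$ from the spectral factorization \eqref{eig} and the columns of $U$ are orthonormal, the right-hand side collapses to $\delta_{\ell k}$.

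The final and most delicate stage is to verify the two coupled singular-value relations $\bm{K}\widehat{\eta}_\ell=\sqrt{\lambda_\ell}\,\bm{u}_\ell$ and $\bm{K}^*\bm{u}_\ell=\sqrt{\lambda_\ell}\,\widehat{\eta}_\ell$. For the first I would apply the Riesz identity \eqref{Riesz} componentwise, so that $(\bm{K}\widehat{\eta}_\ell)_j=\langle\eta_j,\widehat{\eta}_\ell\rangle_W$; inserting \eqref{etaort2} and using $\cG\bm{u}_\ell=\lambda_\ell\bm{u}_\ell$ this becomes $(\cG\bm{u}_\ell)_j/\sqrt{\lambda_\ell}=\sqrt{\lambda_\ell}\,u_{j\ell}$, as required. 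For the second relation I would invoke the explicit form of the adjoint derived inside the proof of Lemma~\ref{lemma1}, namely $\bm{K}^*\bm{g}=\sum_{j=1}^{N_m}g_j\eta_j$; applied to $\bm{g}=\bm{u}_\ell$ this gives $\sum_j u_{j\ell}\eta_j=\sqrt{\lambda_\ell}\,\widehat{\eta}_\ell$. The step requiring the most care is exactly this identification of $\bm{K}^*\bm{u}_\ell$, since it rests on recalling the concrete adjoint action rather than an abstract duality property; everything else is bookkeeping.

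Once both relations are in hand, the conclusion follows: the $\bm{u}_\ell$ are orthonormal in $\RR^{N_m}$ by construction, the $\widehat{\eta}_\ell$ are orthonormal in $W$ by the second stage and span $\cN(\bm{K})^\perp$ (which has dimension $N_m$ by Lemma~\ref{lemma1} since $\cG$ is nonsingular), and the singular values $\sqrt{\lambda_\ell}$ are positive. Hence $\{\sqrt{\lambda_\ell},\widehat{\eta}_\ell,\bm{u}_\ell\}_{\ell=1}^{N_m}$ exhausts the nonzero part of the singular system of $\bm{K}$. Equivalently, I would note that $\bm{K}\bm{K}^*=\cG$ as operators on $\RR^{N_m}$, so that the spectral factorization \eqref{eig} is the eigendecomposition of $\bm{K}\bm{K}^*$, directly producing the left singular vectors $\bm{u}_\ell$ and the singular values $\sqrt{\lambda_\ell}$.
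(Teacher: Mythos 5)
Your proof is correct, and its overall skeleton coincides with the paper's: same interchange of summations to obtain \eqref{etaort}, same Gram-matrix computation for orthonormality, and the same Riesz-identity argument for $\bm{K}\widehat{\eta}_\ell=\sqrt{\lambda_\ell}\,\bm{u}_\ell$. The one place where you genuinely diverge is the companion relation $\bm{K}^*\bm{u}_\ell=\sqrt{\lambda_\ell}\,\widehat{\eta}_\ell$. The paper proves it abstractly: it takes an arbitrary $f\in W$, splits it as $f_0+f_1$ with $f_0\in\cN(\bm{K})$ and $f_1\in\cN(\bm{K})^\perp$, expands $f_1$ in the $\widehat{\eta}_j$ basis, and evaluates $\langle f,\bm{K}^*\bm{u}_\ell\rangle_W$ via the duality \eqref{propadjoint}. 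You instead read off the concrete adjoint action $\bm{K}^*\bm{g}=\sum_j g_j\eta_j$ from the last display in the proof of Lemma~\ref{lemma1} and apply it to $\bm{g}=\bm{u}_\ell$, which gives $\sum_j u_{j\ell}\eta_j=\sqrt{\lambda_\ell}\,\widehat{\eta}_\ell$ in one line. Your route is shorter and more elementary (and your closing observation that $\bm{K}\bm{K}^*=\cG$ on $\RR^{N_m}$, so that \eqref{eig} is literally the eigendecomposition of $\bm{K}\bm{K}^*$, packages the whole singular-system claim neatly); the paper's weak-form argument has the merit of not relying on the explicit representer formula for $\bm{K}^*$, only on the defining property of the adjoint. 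Both are sound, and both, like the paper, tacitly require $\cG$ to be positive definite so that all $\lambda_\ell>0$ --- a caveat the paper addresses in the remark following the theorem.
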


\begin{proof}
Starting from~\eqref{fc} and \eqref{etacoef}, changing the order of summation, 
we obtain
\[
f^\dagger = \sum_{j=1}^{N_m} c_j \eta_j
= \sum_{j=1}^{N_m} \sum_{\ell=1}^{N_m} \frac{\bm{u}_\ell^T 
\bm{g}}{\lambda_\ell} u_{j \ell} \eta_j
= \sum_{\ell=1}^{N_m} \frac{\bm{u}_\ell^T \bm{g}}{\sqrt{\lambda_\ell}}
\sum_{j=1}^{N_m} \frac{u_{j \ell}}{\sqrt{\lambda_\ell}} \eta_j.
\]
Equation \eqref{etaort} follows by defining $\widehat{c}_\ell$ and
$\widehat{\eta}_\ell$ as in \eqref{etaort2}.

Let us now prove the final statement of the theorem.
It is immediate to verify that the functions $\widehat{\eta}_\ell$,
$\ell=1,\ldots,N_m$, form an orthonormal basis for $\cN(\bm{K})^\perp$.
Indeed, letting $\cG_{ij}= \langle \eta_i, \eta_j \rangle_W$ be 
the elements of $\cG$, we have
\begin{align*}
\langle \widehat{\eta}_k,\widehat{\eta}_h \rangle_W 
&= \sum_{i=1}^{N_m} \sum_{j=1}^{N_m}
\frac{u_{ik}}{\sqrt{\lambda_k}} \frac{u_{jh}}{\sqrt{\lambda_h}}
\langle \eta_i, \eta_j \rangle_W
= \frac{1}{\sqrt{\lambda_k \lambda_h}} \sum_{i=1}^{N_m} u_{i k}
\sum_{j=1}^{N_m} \cG_{ij} u_{j h} \\
&= \frac{1}{\sqrt{\lambda_k \lambda_h}} (U^T \cG U)_{kh}
= \frac{1}{\sqrt{\lambda_k \lambda_h}} \Lambda_{kh} = \delta_{kh},
\end{align*}
where $\delta_{kh}$ is the Kronecker delta and, in the last equality, the
matrix $\cG$ is replaced by its spectral decomposition~\eqref{eig}.
The orthonormality of the vectors $\bm{u}_\ell$, $\ell=1,\ldots,N_m$,
immediately follows from factorization~\eqref{eig}.

From the definition~\eqref{Riesz} of the Riesz representers, we can write
\begin{align*}
(\bm{K}\widehat{\eta}_\ell)_j & = \langle \eta_j, \widehat{\eta}_\ell\rangle_W
= \sum_{s=1}^{N_m} \frac{u_{s\ell}}{\sqrt{\lambda_\ell}} \langle \eta_j, 
\eta_s\rangle_W
= \sum_{s=1}^{N_m} \frac{u_{s\ell}}{\sqrt{\lambda_\ell}} \cG_{js}
= \frac{1}{\sqrt{\lambda_\ell}} (\cG U)_{j\ell}\\
& = \frac{1}{\sqrt{\lambda_\ell}} (U \Lambda)_{j\ell}
= \sqrt{\lambda_\ell} \, u_{j\ell}, \qquad j=1,\ldots,N_m,
\end{align*}
where the spectral factorization~\eqref{eig} of $\cG$ is employed again.
Then, $\bm{K}\widehat{\eta}_\ell=\sqrt{\lambda_\ell}\bm{u}_\ell$.

Now, let $f\in W$. Then, $f=f_0+f_1$, with $f_0\in\cN(\bm{K})$, 
$f_1\in\cN(\bm{K})^\perp$, and
$$
f_1 = \sum_{j=1}^{N_m} \alpha_j \widehat{\eta}_j,
\qquad\text{with } \alpha_j=\langle f_1, \widehat{\eta}_j\rangle_W.
$$
By the definition~\eqref{propadjoint} of the adjoint operator, we obtain
\begin{align*}
\langle f, \bm{K}^*\bm{u}_\ell \rangle_W
& = \langle \bm{K}f, \bm{u}_\ell \rangle_2
= \langle \bm{K}f_1, \bm{u}_\ell \rangle_2
= \sum_{j=1}^{N_m} \alpha_j \langle \bm{K} \widehat{\eta}_j, \bm{u}_\ell 
\rangle_2 \\
&= \sum_{j=1}^{N_m} \alpha_j \left\langle \sqrt{\lambda_j} \bm{u}_j,
\bm{u}_\ell \right\rangle_2 
= \alpha_\ell \sqrt{\lambda_\ell}
= \langle f, \sqrt{\lambda_\ell} \widehat{\eta}_\ell \rangle_W,
\end{align*}
since $\alpha_\ell=\langle f_1, \widehat{\eta}_\ell\rangle_W
=\langle f, \widehat{\eta}_\ell\rangle_W$.
Then $\bm{K}^*\bm{u}_\ell=\sqrt{\lambda_\ell}\widehat{\eta}_\ell$.
It follows that
$$
\bm{K}^*\bm{K}\widehat{\eta}_\ell = \lambda_\ell \widehat{\eta}_\ell,
\qquad
\bm{K}\bm{K}^*\bm{u}_\ell = \lambda_\ell \bm{u}_\ell,
\qquad \ell=1,\ldots,N_m.
$$
This completes the proof.
\end{proof}

We remark that Theorem~\ref{theoremort} is applicable under the assumption
that the Gram matrix $\cG$ is positive definite. 
In practice, because of error propagation, the smallest numerical eigenvalues
of $\cG$ may become zero, or even negative. In this case, that is, if
$\lambda_{N_m}\leq 0$, we replace $N_m$ in all summations by an integer
$N<N_m$ such that $\lambda_N>0\geq \lambda_{N+1}$.

From~\eqref{etaort} and from the definition of $\widehat{\bm{c}}$ 
in~\eqref{etaort2}, it follows that
\begin{equation}\label{normfc}
\|f^\dagger\|_W = \|\widehat{\bm{c}}\|_2 = \|L\bm{c}\|_2,
\qquad
\text{with } L=\Lambda^{1/2}U^T,
\end{equation}
where the relation between $\bm{c}$ and 
$\widehat{\bm{c}}$ is obtained by \eqref{etaort2},
writing $\widehat{\bm{c}}$ in matrix form 
\[
\widehat{\bm{c}} = \Lambda^{-1/2}U^T\bm{g}
= \Lambda^{-1/2}U^T \cG\bm{c}
= \Lambda^{-1/2}U^T U\Lambda U^T \bm{c}
= \Lambda^{1/2} U^T \bm{c}.
\]
This expression for $\widehat{\bm{c}} $ is equivalent to solving the linear
system $\widehat{\cG}\, \widehat{\bm{c}} = \bm{g}$,
whose coefficient matrix $\widehat{\cG}=U\Lambda^{1/2}$ has a condition number
which is the square root of that of $\cG$.

\begin{figure}[ht] \centering
\includegraphics[width=.46\textwidth]{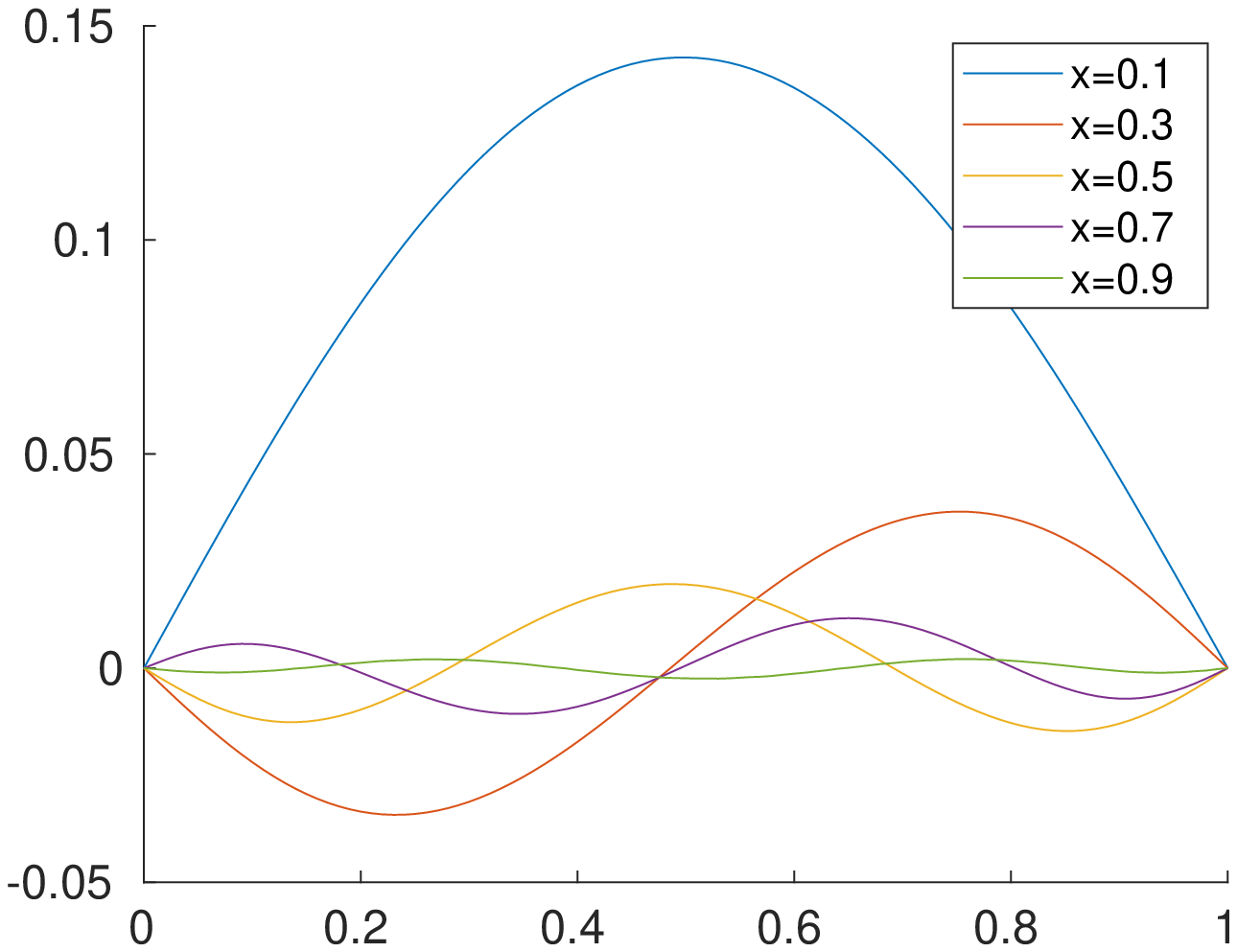}\hfill
\includegraphics[width=.46\textwidth]{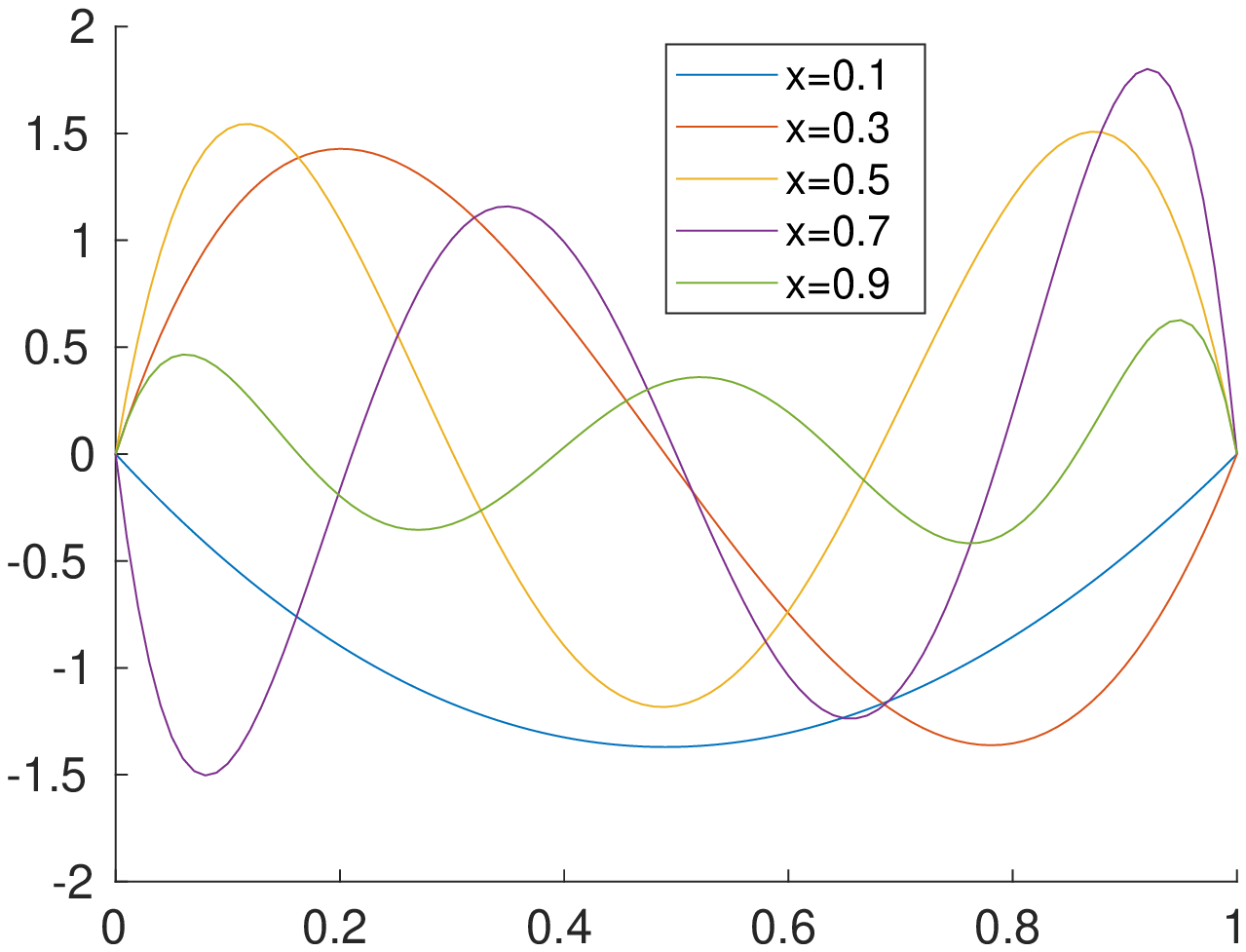}\\ \vspace{.5cm}
\includegraphics[width=.46\textwidth]{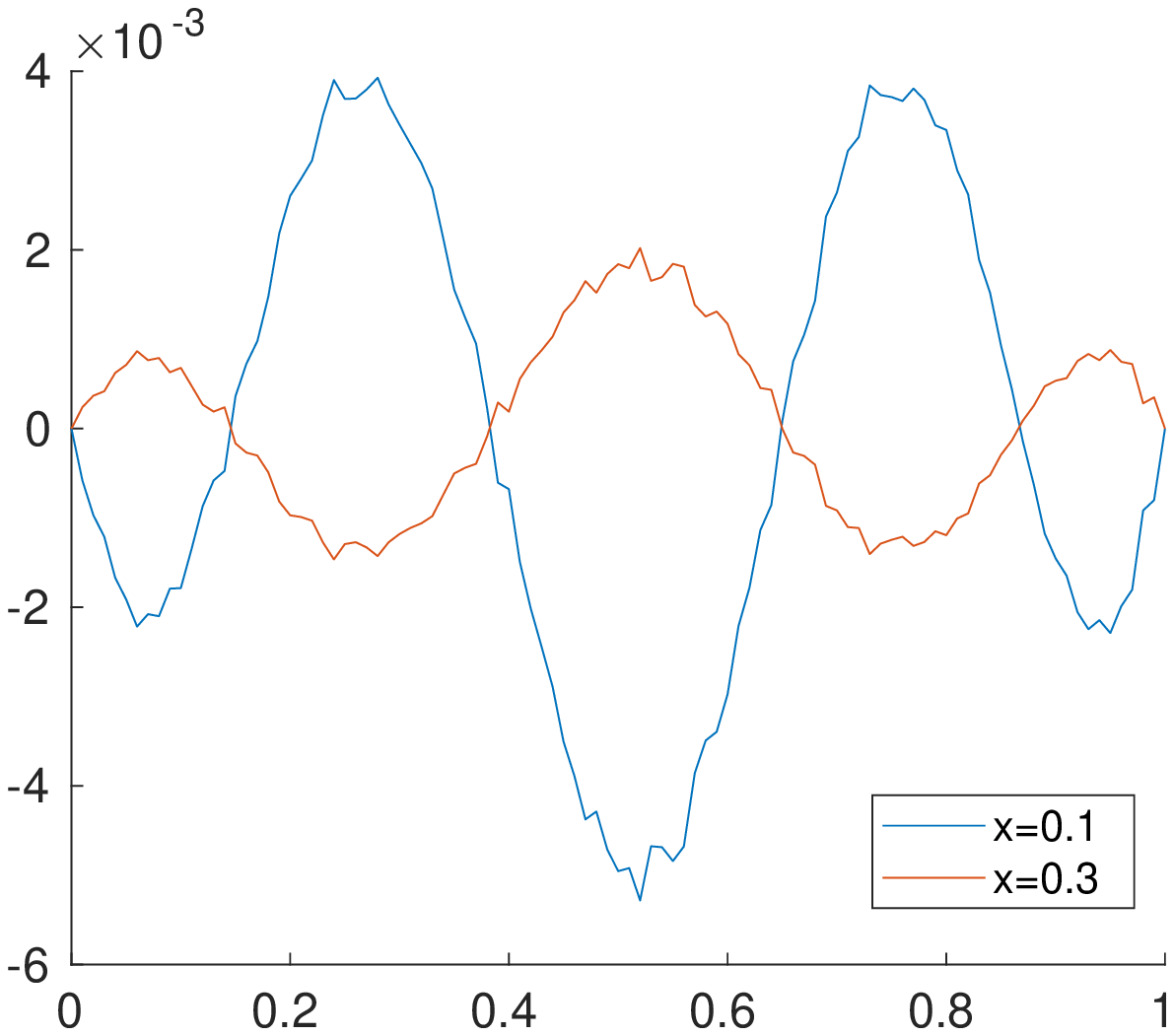}\hfill
\includegraphics[width=.46\textwidth]{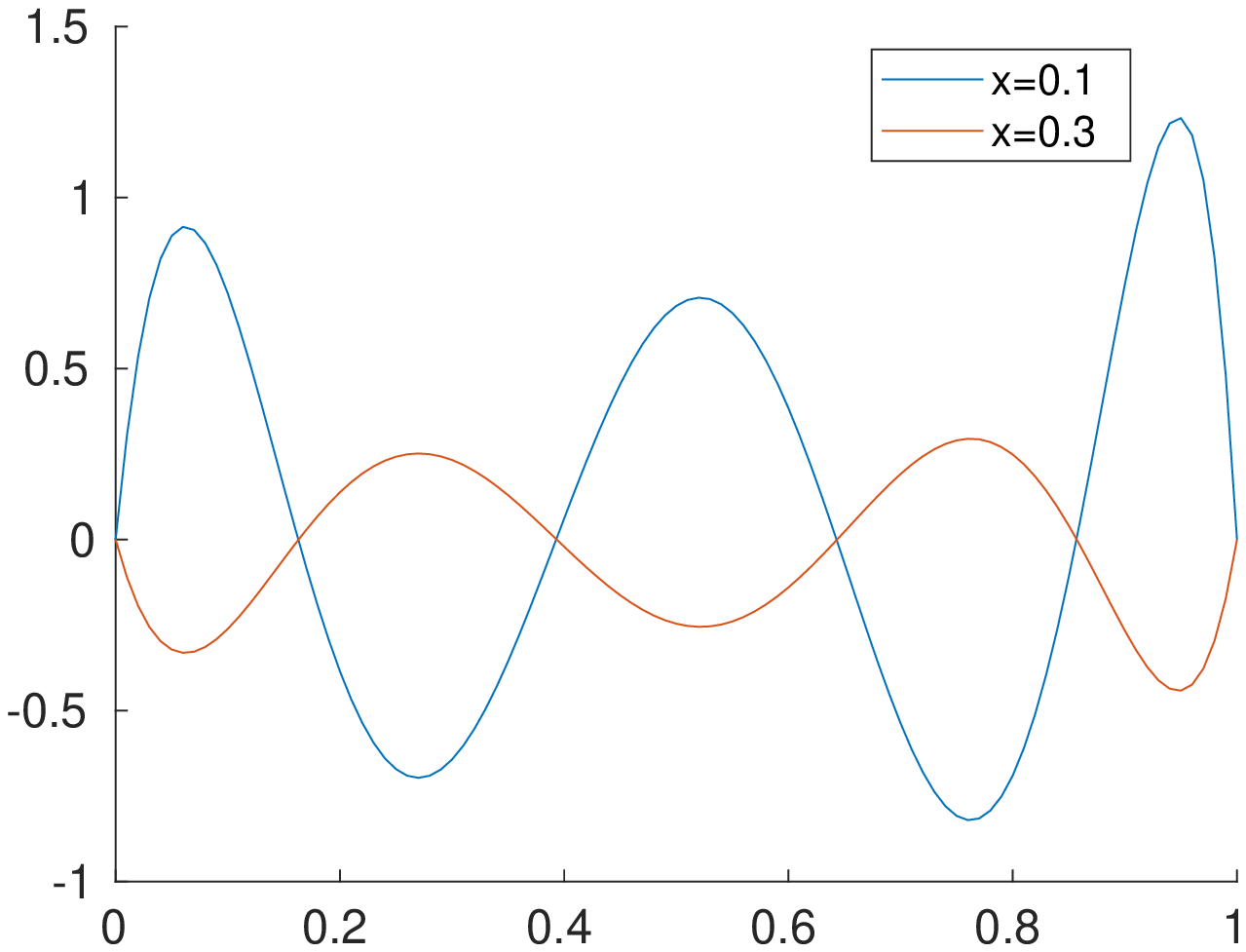}
\caption{Orthonormalized Riesz functions for the system \eqref{ex1}:
$\widehat{\eta}_{1,i}$ (top-left) and $\widehat{\eta}''_{1,i}$ (top-right) for
$x_i = 0.1+0.2(i-1)$, $i = 1,\ldots,5$; $\widehat{\eta}_{2,i}$ (bottom-left)
and $\widehat{\eta}''_{2,i}$ (bottom-right) are displayed only for $x_1$ and
$x_2$.}
\label{fig1cap}
\end{figure}

In order to handle ill-conditioning, as it is customary, we replace the original
problem with a nearby one, whose solution is less sensitive to the error present
in the data. 
The representation \eqref{etaort} is particularly suitable to construct a
regularized solution.
Indeed, according to the ``discrete'' Picard
condition~\cite[Section~4.5]{Hansen}, the numerators in the coefficients
$\widehat{c}_\ell$ should decay to zero faster than the denominators.
Anyway, the presence of noise in the right-hand side $\bm{g}$ will prevent
the projections $\bm{u}_\ell^T\bm{g}$ from decaying when $\ell$ increases,
leading to severe growth in the values of the coefficients.
Truncating the summation in \eqref{etaort} removes the noisy components of
the solution that are enhanced by ill-conditioning.
Moreover, it damps the high frequency components represented by the
$\widehat{\eta}_\ell$ functions with a large index $\ell$.

The association of high frequencies to small singular values cannot be
proved in general. However, in the case of first kind integral equations with a
smooth kernel, it has been observed that singular functions associated with
the smallest singular values oscillate much, while those corresponding to
large singular values are smooth.
For example, Figure~\ref{fig1cap} displays the functions $\widehat{\eta}_\ell$
obtained by applying formula \eqref{etaort2} to the Riesz functions constructed
in Example~\ref{example1}.
In the summation \eqref{etaort2}, the upper bound for the index is fixed at
$N=7$, to preserve the positivity of the eigenvalues.
The graphs in the left column depict the orthonormal functions, and the ones in
the right column their second derivatives.
It is immediate to observe the increasing frequency of the orthonormal basis.

The graphs of the functions $\widehat{\eta}_{2,1}$ and $\widehat{\eta}_{2,2}$
in the bottom-left panel of Figure~\ref{fig1cap} are extremely jagged,
showing that there is a strong error propagation in the numerical
construction of the orthonormal functions.
This deters from employing the orthonormal basis in the real computation,
unless a more stable orthonormalization process is implemented.
Anyway, as we will show, the functions $\widehat{\eta}_\ell$ are only
implicitly used in the construction of the regularized solution.

Indeed, the regularized solution is obtained by choosing an index $\kappa$ 
to truncate the summation in~\eqref{etaort}, i.e., $1\leq \kappa\leq N$, leading
to the expression
\begin{equation}\label{freg}
f^{(\kappa)} = \sum_{\ell=1}^{\kappa} \widehat{c}_\ell \widehat{\eta}_\ell
= \sum_{\ell=1}^{\kappa} \frac{\bm{u}_\ell^T \bm{g}}{\sqrt{\lambda_\ell}}
\sum_{j=1}^{N} \frac{u_{j\ell}}{\sqrt{\lambda_\ell}} \eta_j
= \sum_{j=1}^{N} \sum_{\ell=1}^{\kappa} \frac{\bm{u}_\ell^T 
\bm{g}}{\lambda_\ell} u_{j \ell} \eta_j
=\sum_{j=1}^{N} c^{(\kappa)}_j \eta_j.
\end{equation}
This shows that $f^{(\kappa)}$ can be expressed as a linear combination of the
Riesz representers $\eta_j$ and there is no need to explicitly construct the
singular functions $\widehat{\eta}_\ell$.

The coefficients in the last summation correspond to the \emph{truncated
eigendecomposition} (TEIG) solution of system \eqref{linsystem}
(see~\cite{agrr21,gorr16} for more details) with parameter $\kappa\leq N$,
defined to be the components of the vector
\begin{equation}
\bm{c}^{(\kappa)} = U\Lambda_\kappa^\dagger U^T \bm{g} = \sum_{\ell=1}^\kappa
\frac{\bm{u}_\ell^T\bm{g}}{\lambda_\ell} \bm{u}_\ell,
\label{truncated}
\end{equation}
where $\Lambda_\kappa^\dagger$ denotes the Moore-Penrose
pseudoinverse~\cite{bjo96} of
$\Lambda_\kappa=\diag(\lambda_1,\dots,\lambda_\kappa,0,\ldots,0)$.
We observe that, because of the orthonormality of the functions
$\widehat{\eta}_\ell$, $\|f^{(\kappa)}\|_W \leq \|f^{(\kappa+1)}\|_W \leq 
\|f^\dagger\|_W$.

It is possible to show that the above vector $\bm{c}^{(\kappa)}$ solves the
optimization problem
$$
\begin{cases}
\displaystyle \min_{\bm{c}} \|L\bm{c}\|_2 \\
\displaystyle \bm{c}\in \bigl\{ \arg \min_{\bm{c}} 
\|\cG_\kappa \bm{c}-\bm{g}\|_2 \bigr\},
\end{cases}
$$
where $\cG_\kappa=U\Lambda_\kappa U^T$ is the TEIG of $\cG$.
Therefore, from the algebraic point of view, the computation of $f^{(\kappa)}$
corresponds to selecting the minimal-$L$-norm vector among the solutions of the
best rank-$\kappa$ approximation of system \eqref{linsystem}.
Equation \eqref{normfc} shows that the regularized solution $f^{(\kappa)}$
has minimal-norm in $W$.

A crucial point in the regularization process, in order to get an 
accurate solution, is the estimation of the truncation parameter $\kappa$ in 
\eqref{freg} and \eqref{truncated}.
There exist many methods, either a posteriori or heuristic, aiming at this;
see \cite{Engl,Hansen,rr13}. 
In this paper, we focus our attention on the discrepancy principle and the
L-curve method. 

We assume that the exact right-hand side vector $\bm{g}_\text{exact}$ is
contaminated by an unknown normally distributed noise vector $\bm{e}$, i.e., 
\begin{equation}\label{noise1}
\bm{g} = \bm{g}_\text{exact}+\bm{e}.
\end{equation}
If $\|\bm{e}\|_2$ is known, we can apply the discrepancy principle
\cite{Morozov}, which selects the smallest truncation parameter
$\kappa_\text{d}$ such that
\begin{equation}\label{discrepancy}
\|\cG\bm{c}^{(\kappa_\text{d})}-\bm{g}\|_2 \leq \tau \|\bm{e}\|_2, 
\end{equation}
where $\tau>1$ is a constant independent of the noise level $\|\bm{e}\|_2$.
Note that from \eqref{eig} and \eqref{truncated}, we can write the residual
norm as
\begin{equation}\label{residual}
\|\cG\bm{c}^{(\kappa)}-\bm{g}\|_2^2 
= \|U(\Lambda\Lambda_\kappa^\dagger-I) U^T \bm{g}\|_2^2
= \sum_{j=\kappa+1}^{N_m} (\bm{u}_j^T\bm{g})^2.
\end{equation}
This relation shows that the residual is non-decreasing when $\kappa$ decreases
and it allows reducing the computational load.
Indeed, the projected vector $U^T\bm{g}$ is computed in any case, once the
spectral factorization of $\cG$ is available, since its first $\kappa$
components are required for \eqref{truncated}, but the value of $\kappa$ is not
a priori known.

When the noise level is unknown, we use the L-curve criterion \cite{Ha0,HO},
which selects the regularization parameter $\kappa_\text{lc}$ at the ``corner''
of the curve joining the points
\begin{equation}\label{lcurve}
\left( \log \| \cG\bm{c}^{(\kappa)}-\bm{g} \|_2 , \; 
\log \| f^{(\kappa)} \|_W \right), \qquad \kappa=1,\ldots,N,
\end{equation}
where $f^{(\kappa)}$ is the function defined in \eqref{freg} and
$$
\|f^{(\kappa)}\|_W=\|L\bm{c}^{(\kappa)}\|_2
=\sqrt{(\bm{c}^{(\kappa)})^T \cG \bm{c}^{(\kappa)}}.
$$
When solving discrete ill-posed problems, this curve often exhibits a typical
L-shape. We determine its corner by the method described in~\cite{hjr07} and
implemented in \cite{Hansentools}.

When the exact solution $f$ is available, to ascertain the best possible
performance of the algorithms independently of the strategy adopted for the
estimation of the regularization parameter, in the numerical experiments we
also consider the parameter $\kappa_\text{best}$ which minimizes the norm of
the error, that is,
\begin{equation}\label{kbest}
\kappa_\text{best} = \arg \min_\kappa \|f-f^{(\kappa)}\|_W
= \arg \min_\kappa \|L(\bm{c}-\bm{c}^{(\kappa)})\|_2.
\end{equation}

\begin{remark}\rm
We observe that the operator $\cF_\text{d}$, which assigns to a noisy right-hand
side $\bm{g}$ (see~\eqref{noise1} and~\eqref{noise}) the regularized solution
$f^{(\kappa_\text{d})}$ \eqref{freg} corresponding to the regularization
parameter $\kappa_\text{d}=\kappa_\text{d}(\delta,\bm{g})$ estimated by the
discrepancy principle, is trivially a regularization method in the sense
of~\cite[Definition~3.1]{Engl}.
Indeed, from \eqref{discrepancy} and \eqref{residual}, $\kappa_\text{d}=N_m$
when $\delta\to 0$, and $f^{(N_m)}$ coincides with the minimal-norm solution
$f^\dagger$.
\end{remark}

\section{Numerical tests}\label{sec:tests}

In this section, we report some numerical results obtained by applying our
algorithm to the two examples presented in Section~\ref{sec:method}. 
All the computations were performed on an Intel Xeon E-2244G system with
16Gb RAM, running Matlab 9.10.
The software developed is only prototypal, but it is available from the
authors upon request.

In each numerical test, we consider the exact right-hand side
${\bm{g}}_\text{exact}$ of the linear system \eqref{linsystem},
corresponding to the collocation nodes $x_{\ell,i}$, for $\ell = 1,\ldots,m$
and $i=1,\ldots,n_\ell$. We add Gaussian noise as in \eqref{noise1}, 
where the noise vector $\bm{e}$ is defined by
\begin{equation}\label{noise}
\bm{e} = \frac{\delta}{\sqrt{N_m}}\|{\bm{g}}_\text{exact}\|_2 \bm{w},
\end{equation}
with $N_m$ as in \eqref{Kfj}.
The components of the vector $\bm{w}$ are normally distributed with zero
average and unit variance, and $\delta$ represents the noise level.
For the sake of simplicity, for each system we consider the same collocation
nodes $x_{\ell,i}$ in both equations, so that $m=2$, $n_1=n_2=n$, $N_m=2n$, and 
$x_{1,i}=x_{2,i}$, for $i=1,\dots,n$.

\paragraph{Test problem 1.}\label{sec:test1}

We consider the system \eqref{ex1} described in Example~\ref{example1}.
It consists of two Fredholm integral equations of the
first kind, with $x\in(0,1]$ and exact solution $f(t)=t^2+1$.
In this example we set $x_{\ell,i}= 0.1 + 0.9\,(i-1)/(n-1)$, for $\ell = 1, 
2$ and $i = 1,\ldots,n$.

The corresponding Riesz representers have been computed analytically in 
\eqref{eta1} and \eqref{eta2}.
Note that the analytic expression of $\eta''_{\ell,i}$ defined in 
\eqref{eta1S} and \eqref{eta2S} allows for an accurate computation of the
elements of the Gram matrix \eqref{grammatrix} and for obtaining an explicit
representation of the functions $\eta_{\ell,i}$, providing a fast and accurate
algorithm.

We remind the reader that, by \eqref{xi}, the solution of this problem is
expressed as
\[
f(t) = \xi(t) + \gamma(t),
\]
where $\gamma(t)=t+1$ is the function \eqref{gamma} 
and $\xi(t)$ is the solution of the system \eqref{model3}.

To start with, we depict in Figure~\ref{fig2} the non-regularized 
reconstructions of the solution, obtained for $n=5,10,20$, without noise in
the data, and the corresponding error curves with respect to the exact
solution. 
By ``non-regularized'', we mean that we set $\kappa=N$ in \eqref{freg} and
\eqref{truncated}.
The fact that the errors are so small is remarkable.
Indeed, setting $\delta=0$ in \eqref{noise} only guarantees that the right-hand
side is accurate up to machine precision, that is, roughly $10^{-16}$.
Since the estimation of the condition number of the Gram matrix $\cG$ provided
by the \texttt{cond} function of Matlab for the three problem sizes considered
is $2.2\cdot 10^{18}$, $6.9\cdot 10^{32}$, and $1.1\cdot 10^{19}$,
respectively, the results highlight the stability in the computation,
as well as the effectiveness of the function space setting.

\begin{figure}[ht]
\centering
\includegraphics[width=.47\textwidth]{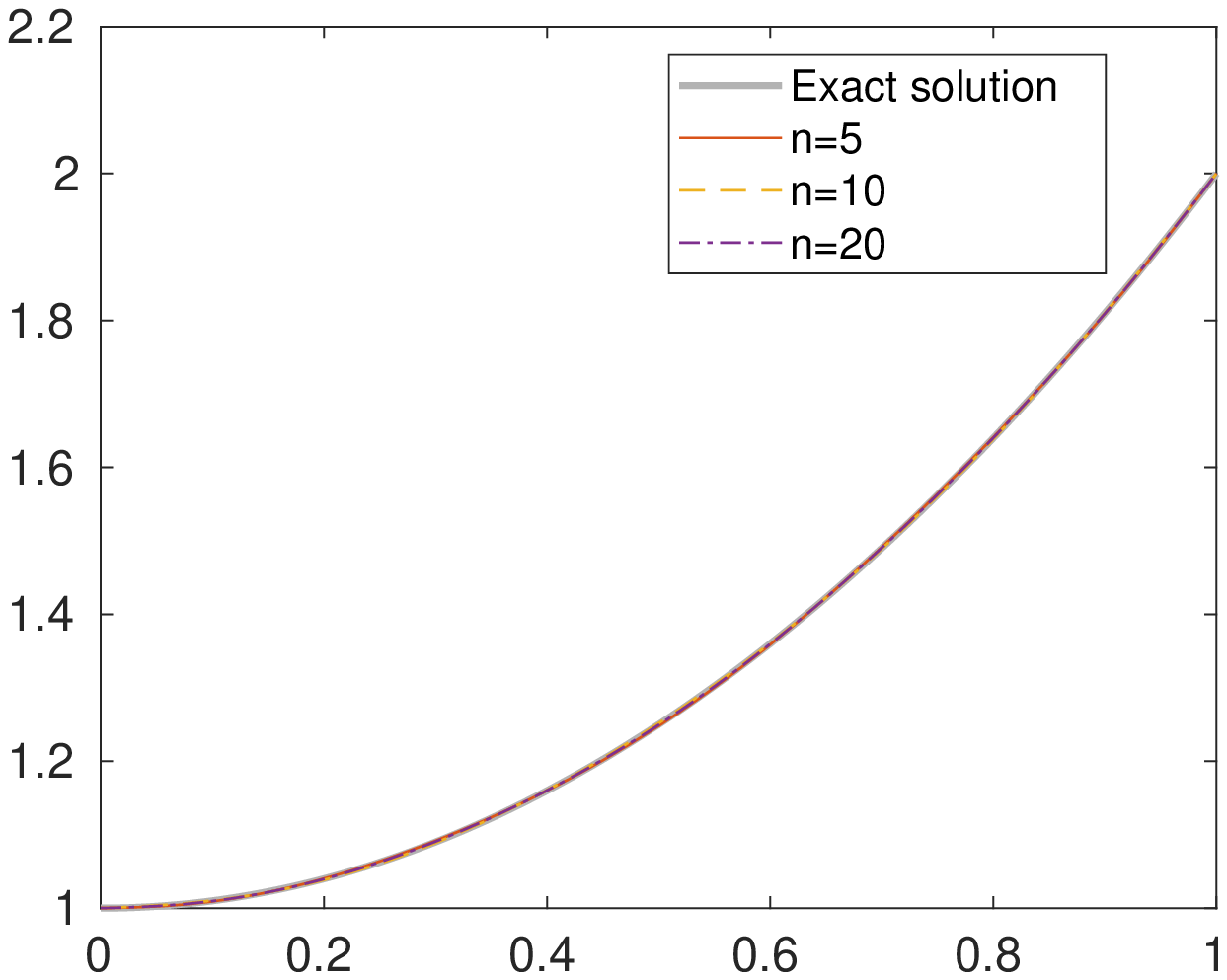}\hfill
\includegraphics[width=.46\textwidth]{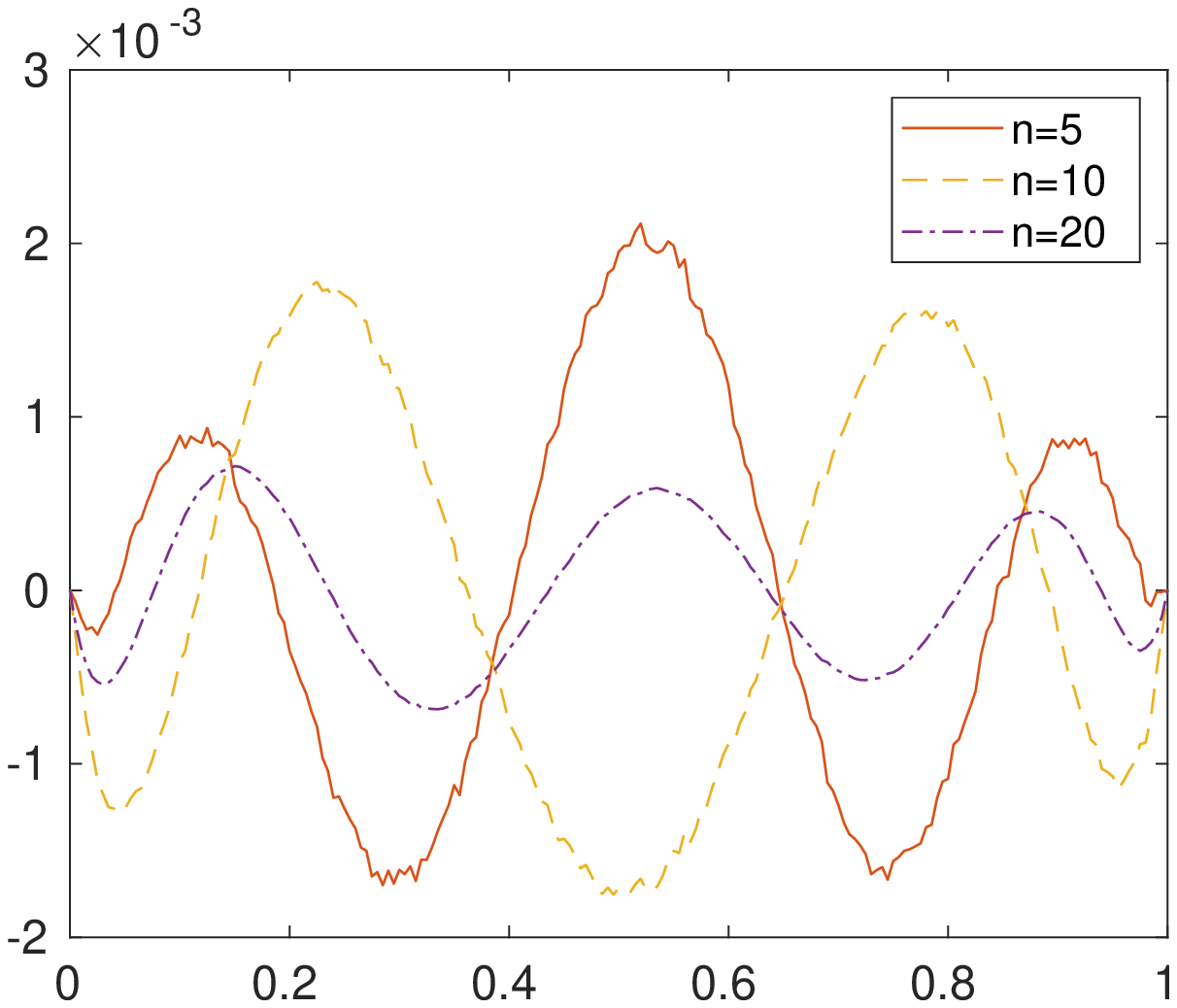}
\caption{Non-regularized reconstructions of the solution of Test problem~1
(left) and corresponding errors (right), for $n=5,10,20$, and without noise.}
\label{fig2}
\end{figure}

Figure~\ref{fig3} shows, in the left pane, the reconstructions obtained without
regularization for $n=5,10,20$, together with the exact solution, when the data
vector is affected by noise with level $\delta=10^{-4}$.
Due to the large condition number, the computed solutions are polluted by noise
propagation to such a point that they oscillate at high frequency away from the
exact solution.
The graph on the right of the same figure displays the results obtained by
computing the regularized solution $f^{(\kappa)}$ defined in \eqref{freg}.
Here, the truncation parameter $\kappa$ coincides with the value
$\kappa_\text{best}$, defined in \eqref{kbest}, corresponding to the best
possible performance of the algorithm.
The quality of the results is excellent.

\begin{figure}[ht]
\centering
\includegraphics[width=.45\textwidth]{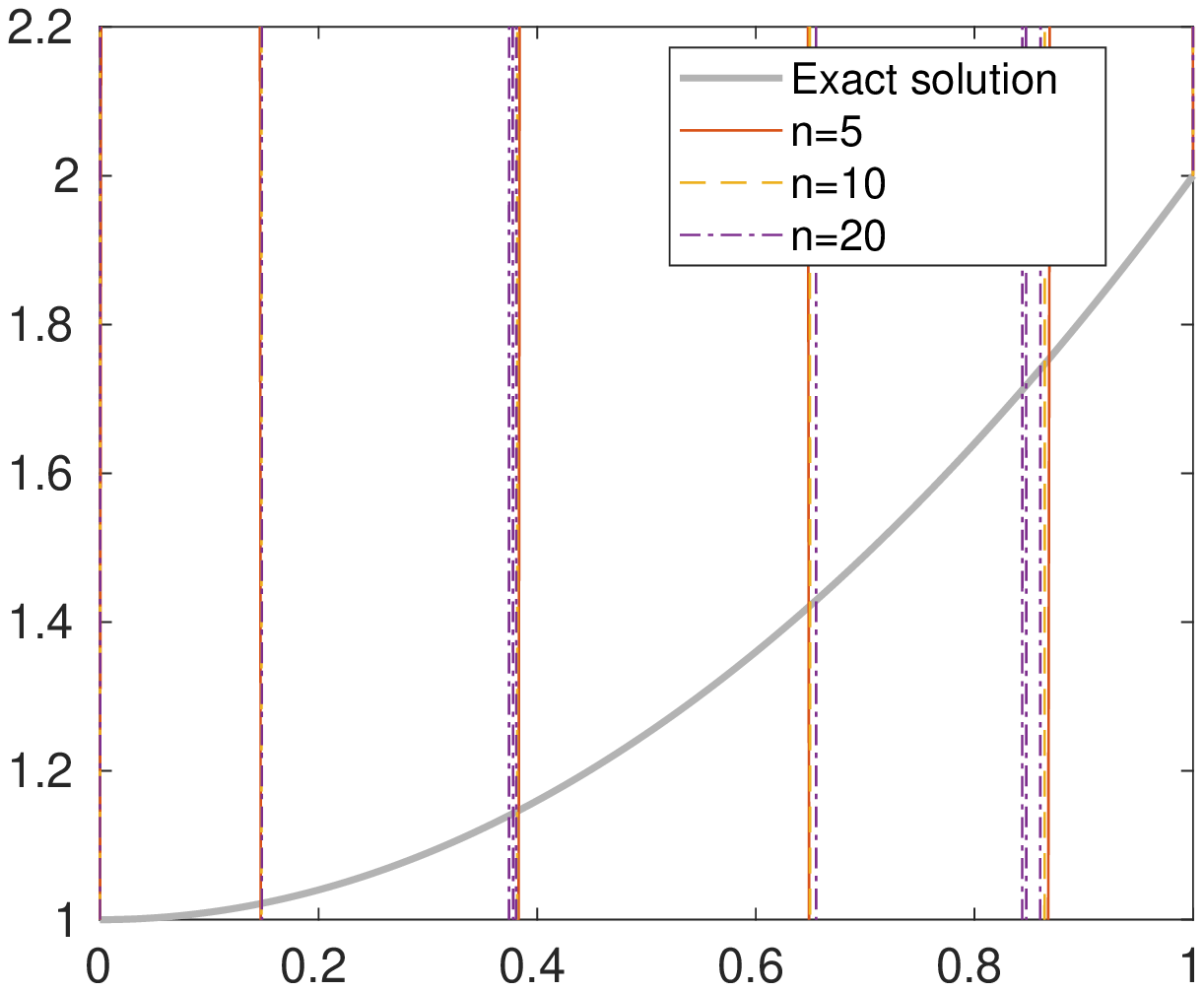}\hfill
\includegraphics[width=.49\textwidth]{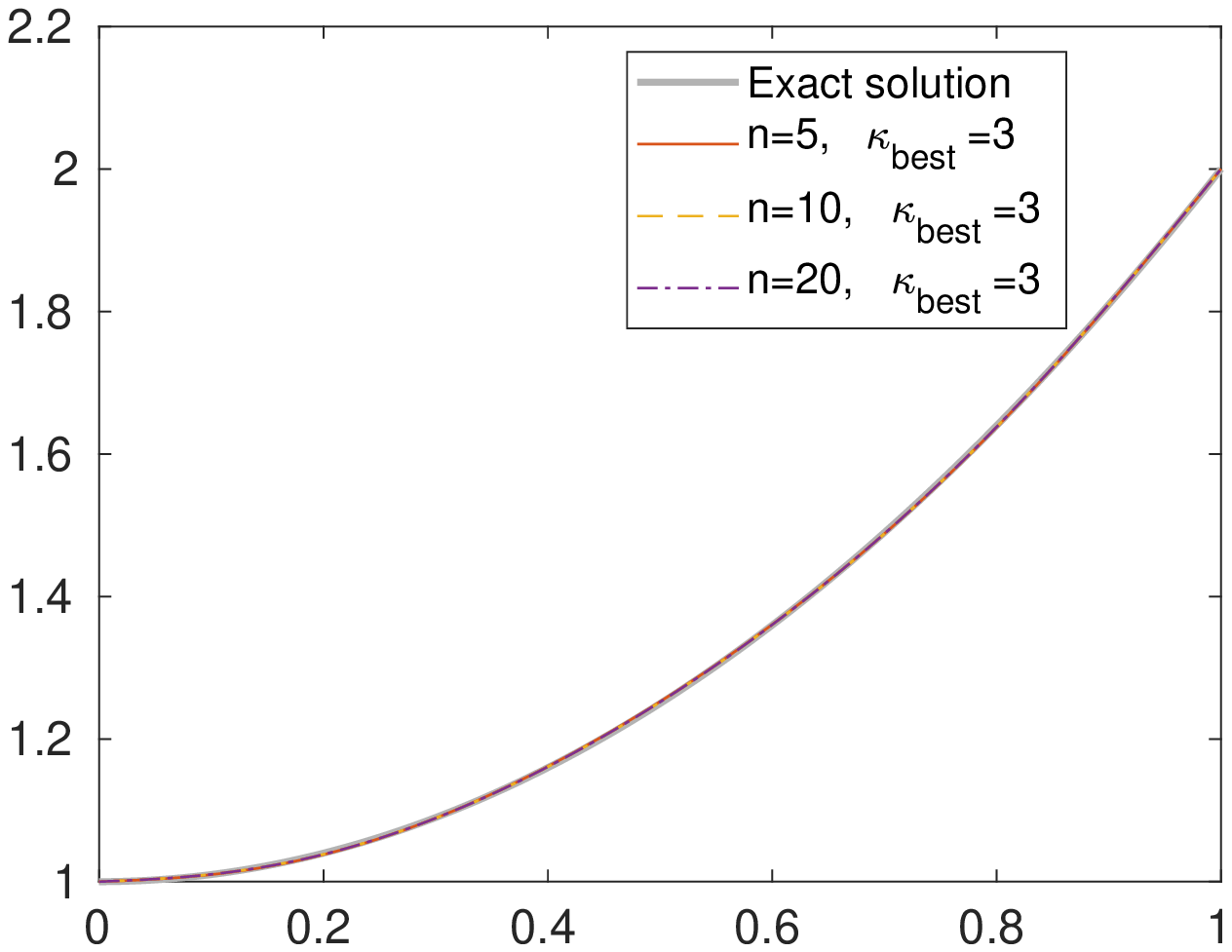}
\caption{On the left: non-regularized solutions of Test problem~1, for
$n=5,10,20$, and noise level $\delta=10^{-4}$. On the right: regularized
solution $f^{(\kappa_\text{best})}(t)$, for $n=5,10,20$, and $\delta=10^{-4}$;
the optimal value $\kappa_\text{best}$ of the regularization parameter is
displayed in the legend.}
\label{fig3}
\end{figure}

The graph on the left of Figure~\ref{fig4} investigates the sensitivity of the
solution to the noise level. It shows the errors obtained for $n=10$ and
$\delta=10^{-8},10^{-4},10^{-2}$. The graph confirms the accuracy and stability
of the proposed regularization method.
In the graph on the right, we compare the ``best'' solution for the noise level 
$\delta=10^{-4}$ to the ones obtained by estimating the regularization
parameter $\kappa_\text{d}$ by the discrepancy principle \eqref{discrepancy},
with $\tau=1.1$, and by the L-curve criterion \eqref{lcurve}, where the
truncation parameter $\kappa_\text{lc}$ is detected by the algorithm described
in \cite{hjr07}. Both estimation techniques are successful.

\begin{figure}[ht]
\centering
\includegraphics[width=.49\textwidth]{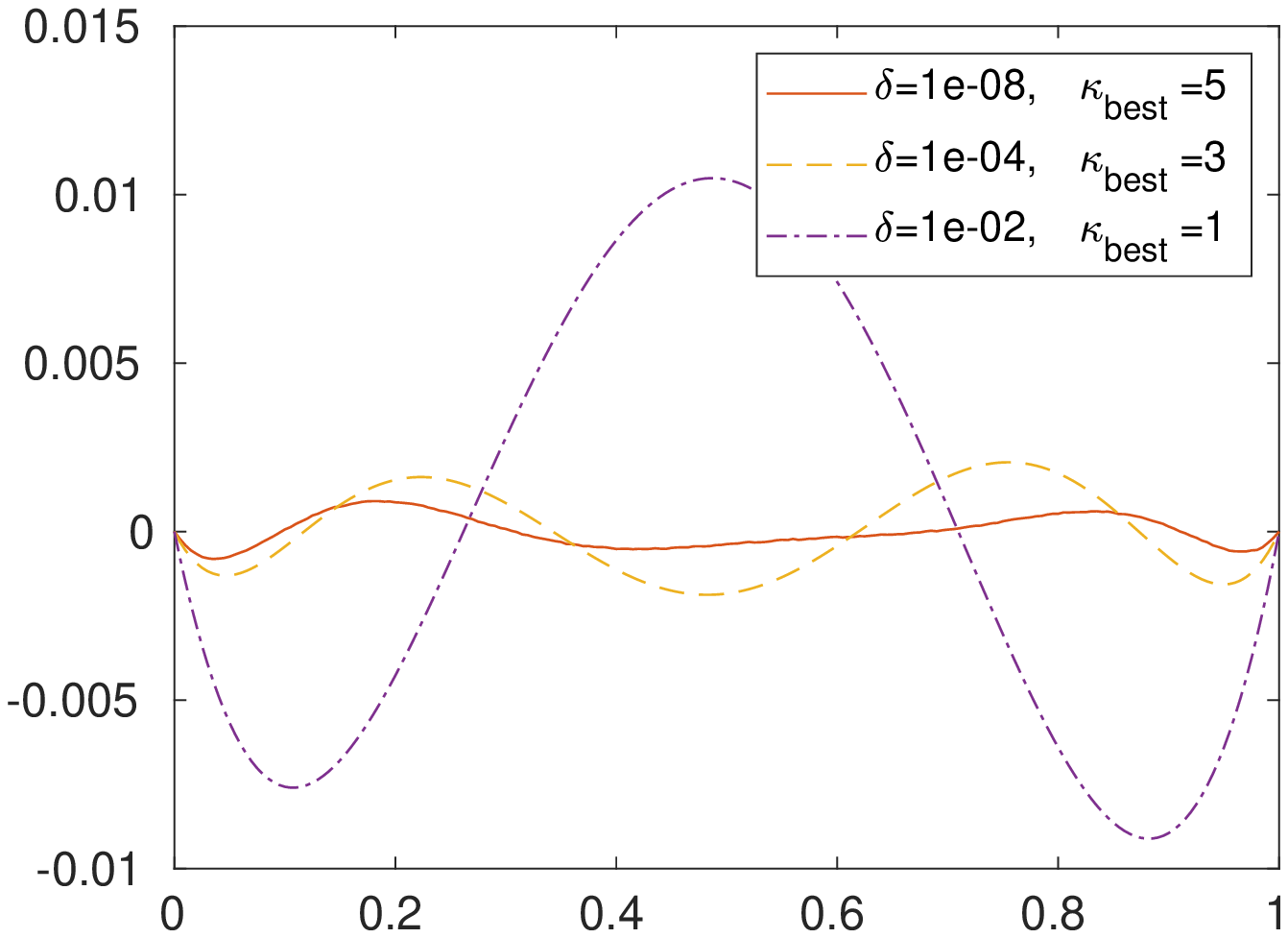}\hfill
\includegraphics[width=.45\textwidth]{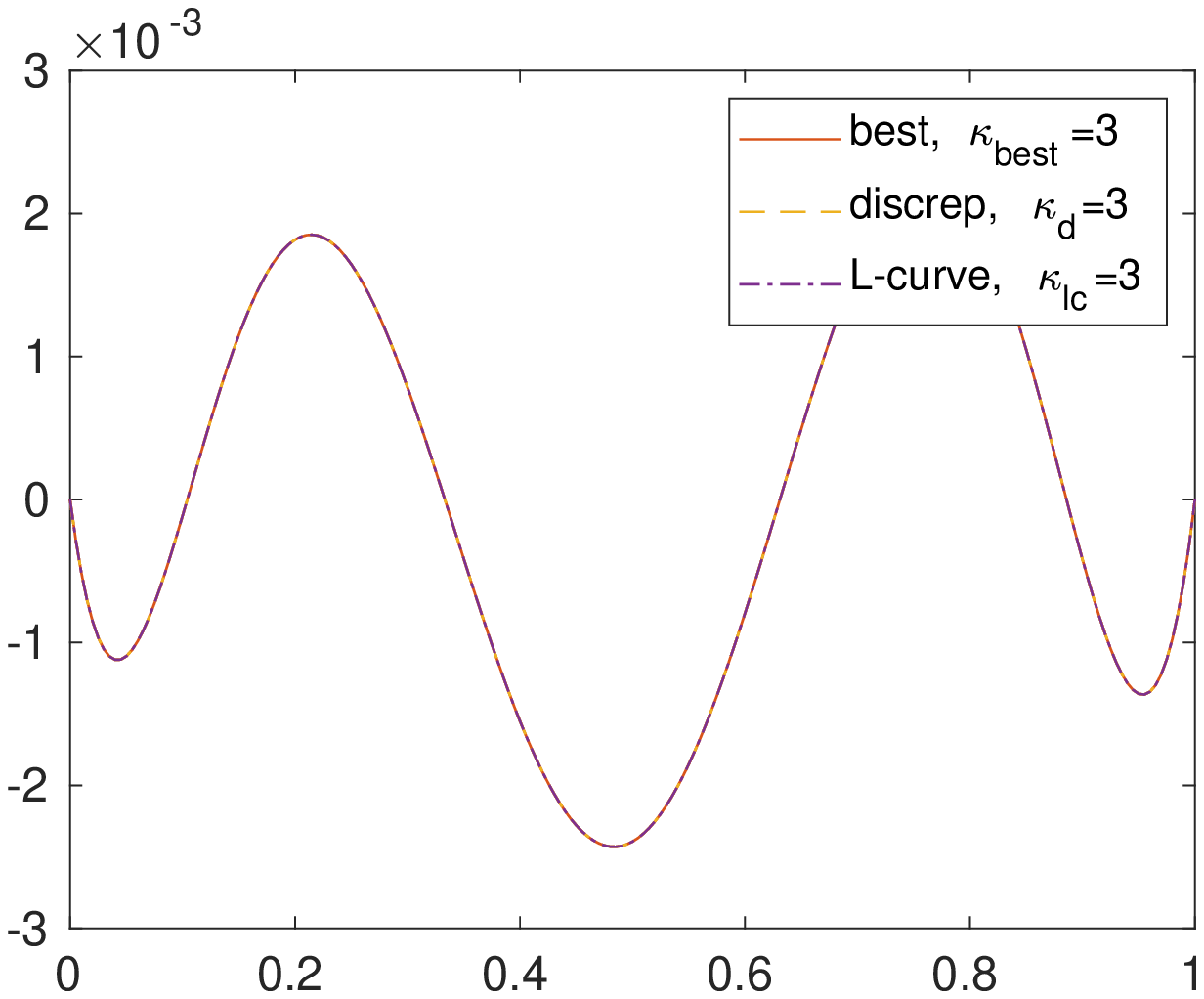}
\caption{On the left: errors corresponding to the regularized solutions
$f^{(\kappa_\text{best})}(t)$ of Test problem~1, for $n=10$ and
$\delta=10^{-8},10^{-4},10^{-2}$. On the right: errors for the solutions
$f^{(\kappa)}(t)$, for $n=20$, $\delta=10^{-4}$, and different estimation
methods for $\kappa$. The values of the regularization parameters
$\kappa_\text{best}$, $\kappa_\text{d}$, and $\kappa_\text{lc}$ are displayed
in the legend.}
\label{fig4}
\end{figure}

\paragraph{Test problem 2.}\label{sec:test2}

Let us now consider the system \eqref{sysbaart} introduced in
Example~\ref{exbaart}, with $x\in(0,\pi/2]$.
It pairs the well-known Baart test problem \cite{Hansentools} to an equation
having the same solution $f(t)=\sin t$.
The collocation points are $x_{\ell,i}=0.1+(\pi/2-0.1)\,(i-1)/(n-1)$, for
$\ell=1,2$ and $i=1,\ldots,n$.

In this example, we were only able to analytically compute the Riesz
representers for the second equation; see \eqref{eta2Sex2} and \eqref{eta2ex2}.
An approximation of the Riesz representers for the first equation was
computed by a Gauss-Legendre quadrature formula. 

\begin{figure}[ht]
\centering
\includegraphics[width=.48\textwidth]{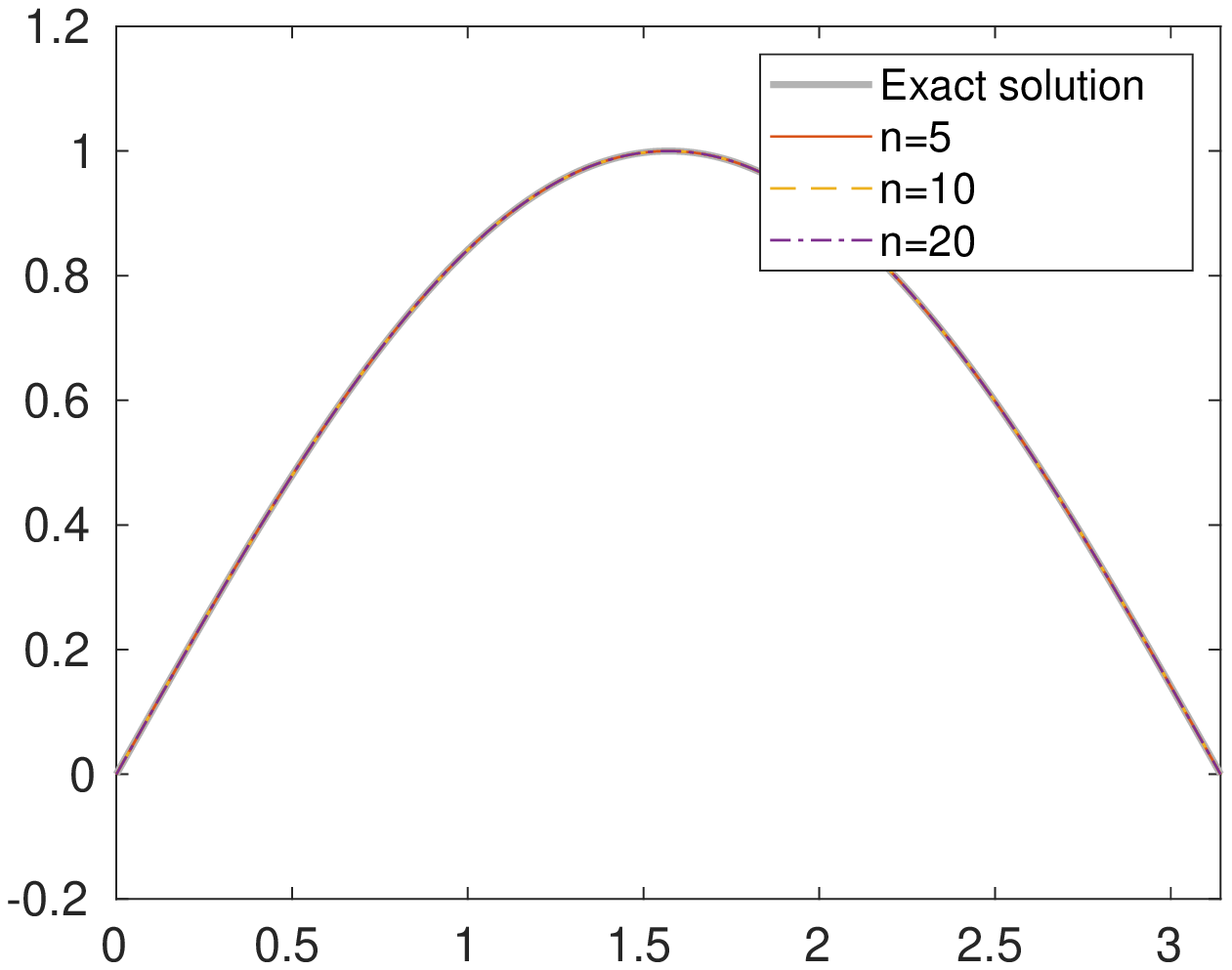}\hfill
\includegraphics[width=.46\textwidth]{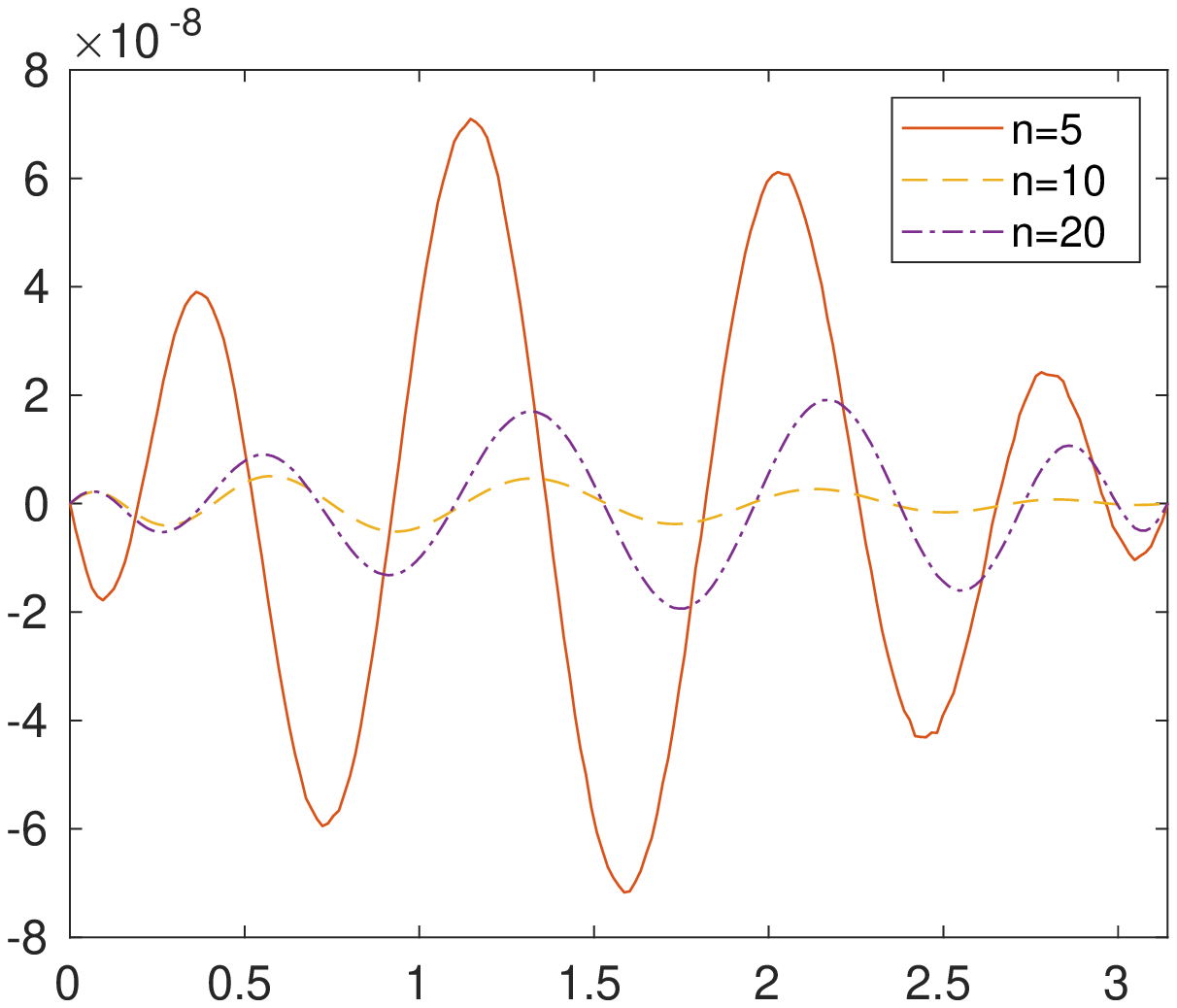}
\caption{Non-regularized reconstructions of the solution of Test problem~2
(left) and corresponding errors (right), for $n=5,10,20$ and without noise.}
\label{fig22}
\end{figure}

%

\begin{figure}[ht]
\centering
\includegraphics[width=.47\textwidth]{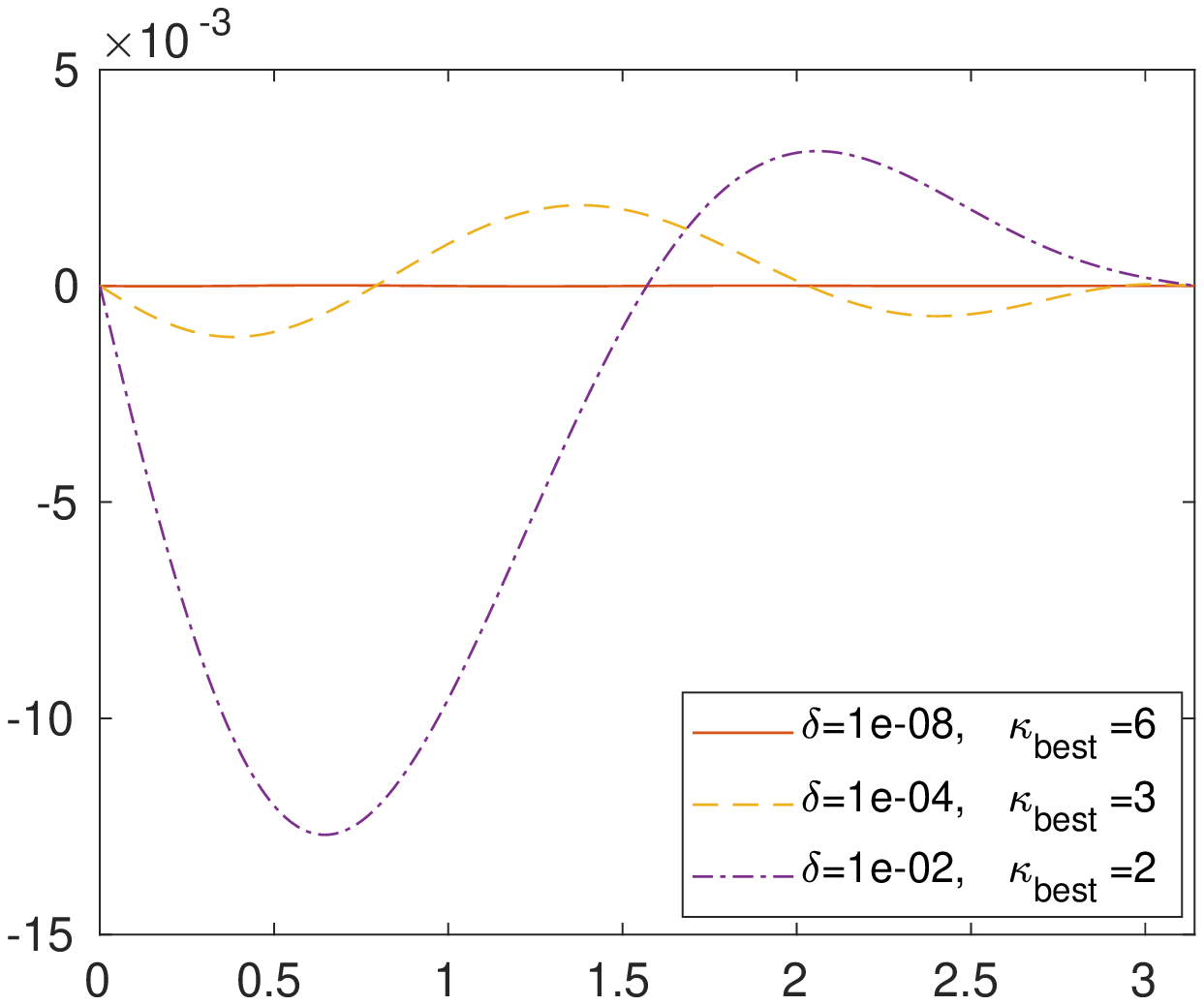}\hfill
\includegraphics[width=.46\textwidth]{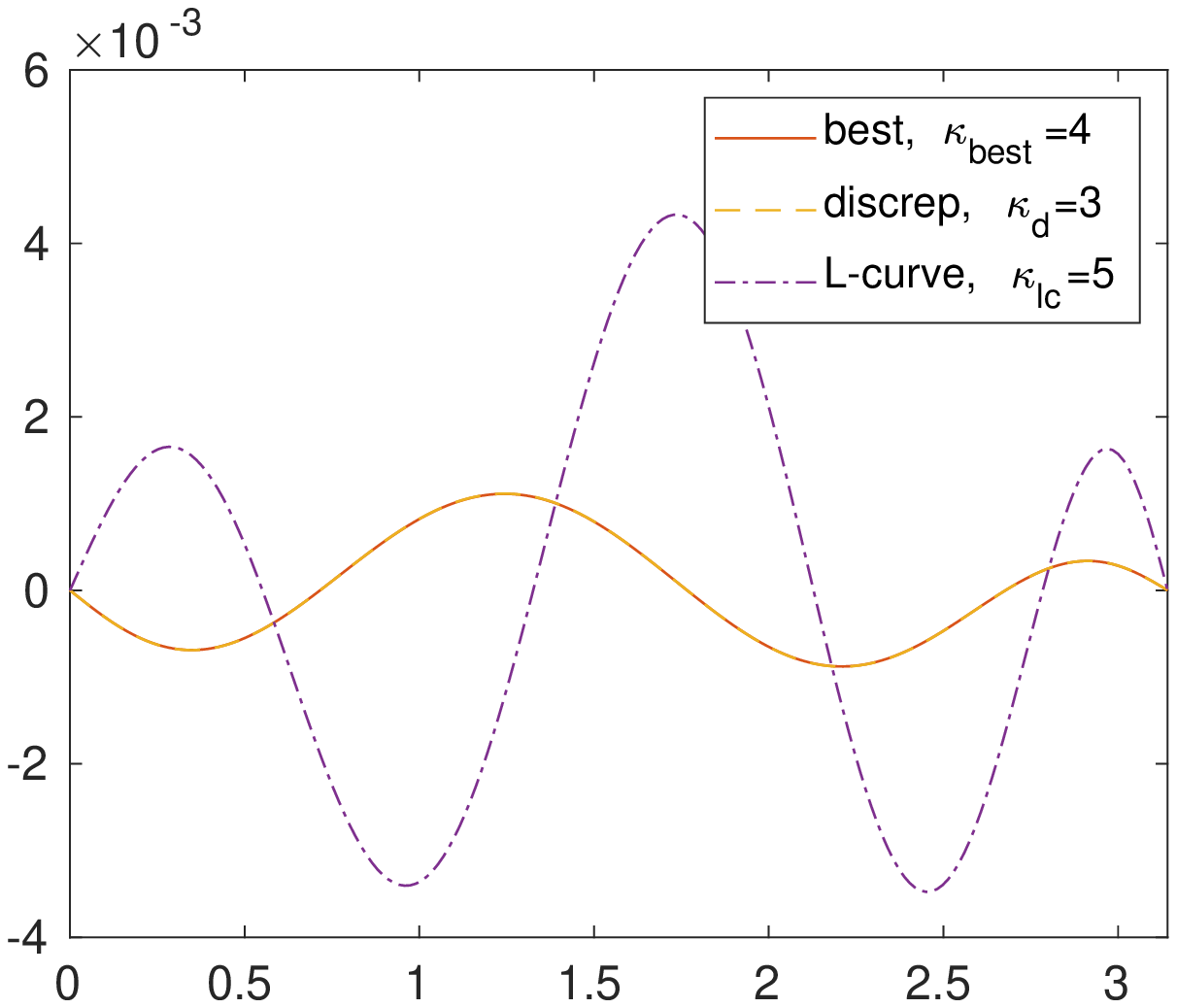}
\caption{On the left: errors corresponding to the regularized solutions
$f^{(\kappa_\text{best})}(t)$ of Test problem~2, for $n=10$ and
$\delta=10^{-8},10^{-4},10^{-2}$. On the right: errors for the solutions
$f^{(\kappa)}(t)$, for $n=20$, $\delta=10^{-4}$, and different estimation
methods for $\kappa$. The values of the regularization parameters
$\kappa_\text{best}$, $\kappa_\text{d}$, and $\kappa_\text{lc}$ are displayed
in the legend.}
\label{fig6}
\end{figure}

Figure~\ref{fig22} shows that, when the data vector is only affected by
rounding errors, the non-regularized solution is very accurate.
On the contrary, as in the previous example, the non-regularized solution is
strongly unstable when a sensible amount of noise is added to the data; we do
not display the results for the sake of brevity.

The graph on the left of Figure~\ref{fig6} depicts the behavior
of the best regularized solutions corresponding to the three noise levels
$\delta=10^{-8},10^{-4},10^{-2}$. All the reconstructions are accurate.
In the second graph, we compare the error corresponding to the optimal
regularization parameter to the ones produced by the discrepancy principle and
the L-curve. Even if the estimated values of the parameter are slightly
different, the results are satisfactory.
We verified that the outcome is not sensibly influenced by the size of the
problem.

A discretization of the Baart integral equation \cite{baart} is implemented in
the Regularization Tools library by P.~C.~Hansen \cite{Hansentools}.
The routine \texttt{baart} adopts a Galerkin discretization based on
orthonormal box functions. We implemented the same discretization method for
the second equation of \eqref{sysbaart}, in order to compare this approach with
the one we propose.

Table~\ref{tabex2} shows the results obtained by the Galerkin approach compared
to the method described in this paper; no regularization method is applied for
the solution of the corresponding linear systems and the right-hand sides are
exact up to machine precision.
The numbers reported in the table represent the infinity norm errors between
the exact and the approximate solutions computed on a discretization of the
interval $[0,\pi]$.
It is clear that there is a strong propagation of rounding errors for the first
method, while the solutions computed by the Riesz approach are very accurate
(see also Figure~\ref{fig22})
thanks to the choice of the function spaces and to the effective use of the
boundary information on the solution.

\begin{table}[ht]\centering
\caption{Infinity norm errors obtained by discretizing Test problem~2 by a
Galerkin method (see the function \texttt{baart} in \cite{Hansentools}) and the
proposed approach based on Riesz theory; the data is free from noise and no
regularization is applied.}\label{tabex2}
\begin{tabular}{ccc}
\hline
$n$ & Galerkin & Riesz \\
\hline
6 & $3.12\cdot 10^{-1\strut}$  & $7.44\cdot 10^{-9}$ \\
10 & $8.78\cdot 10^{-1}$ & $5.16\cdot 10^{-9}$ \\
20 & $4.38\cdot 10^{4\phantom{-}}$  & $1.94\cdot 10^{-8}$ \\
\hline
\end{tabular}
\end{table}

\begin{figure}[ht]
\centering
\includegraphics[width=.47\textwidth]{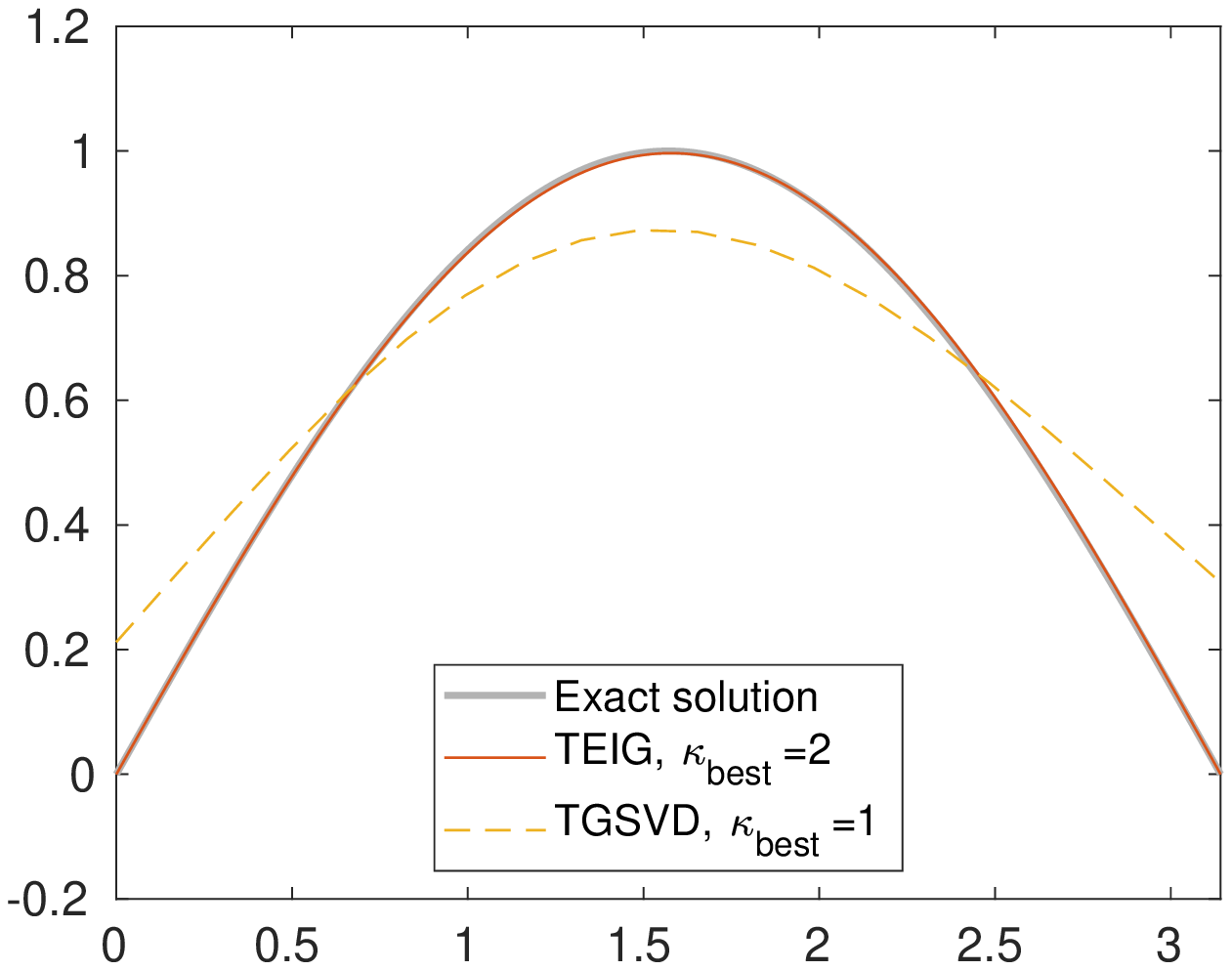}\hfill
\includegraphics[width=.48\textwidth]{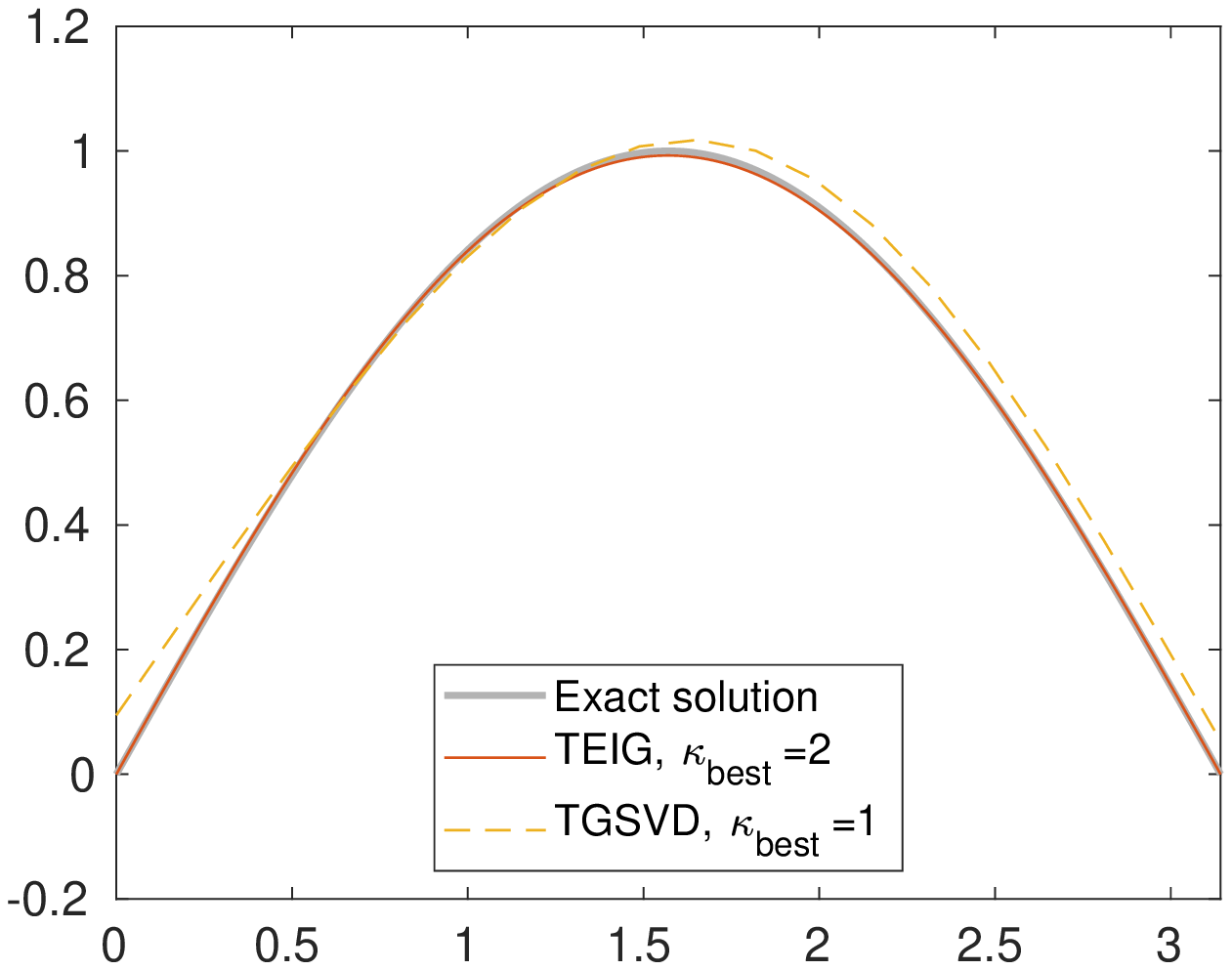}
\caption{Regularized solutions $f^{(\kappa_\text{best})}(t)$ of Test problem~2,
for $n=20$ and $\delta=10^{-2}$, obtained by applying TGSVD to the linear
system resulting from the Galerkin discretization, and by the new proposed
method, labelled as TEIG. On the left, we report the two solutions for the first
equation of system \eqref{sysbaart}, on the right the solutions of the system.
The values of the optimal regularization parameters $\kappa_\text{best}$ are
displayed in the legend.}
\label{figconf}
\end{figure}

We performed a similar comparison in the presence of noise in the data, setting
$\delta=10^{-2}$ in \eqref{noise}, and solving Test problem~2 for $n=20$.
In Figure~\ref{figconf}, the Galerkin approach is regularized by the truncated
generalized singular value decomposition (TGSVD) \cite{Hansen} with a discrete
approximation of the second derivative operator as a regularization matrix.
The obtained solution is compared to the one produced by the method described
in Section~\ref{sec:regularization}.

The graph on the left reports the results for the first equation of system
\eqref{sysbaart}, while the one on the right corresponds to the complete
system.
It is evident that the new method is more accurate than the Galerkin/TGSVD
approach. At the same time, the graphs also show that there is some advantage
in solving the system rather than a single equation.
There is a slight improvement in the error also for the Riesz approach, but
this is not visible in the graph since the order of the infinity norm error is 
$10^{-2}$.

\section{A case study}\label{sec:case study}

Let us consider the following system of integral equations of the first kind 
\begin{equation}\label{linmodel}
\begin{cases}
\displaystyle\int_0^\infty k^V(z+h) \sigma(z)\dz=g^V(h), \qquad 
h \in [0,\infty), \\
\displaystyle \int_0^{\infty\strut} k^H(z+h) \sigma(z)\dz=g^H(h), \qquad 
h \in [0,\infty),
\end{cases}
\end{equation}
proposed in \cite{McNeill} for reproducing the readings of a ground
conductivity meter, a frequency domain electromagnetic (FDEM) induction device;
see Section~\ref{sec:introduction}.
In the above equations,
\begin{equation}\label{kernels}
k^V(z) = \dfrac{4z}{(4z^2+1)^{3/2}}, \qquad
k^H(z) = 2-\dfrac{4z}{(4z^2+1)^{1/2}}
\end{equation}
are the kernel functions corresponding to the vertical and horizontal 
orientation of the coils, respectively, $\sigma(z)\geq 0$ is the unknown
function that represents the electrical conductivity of the subsoil at depth
$z$ below the ground surface, and $g^V(h)$, $g^H(h)$ are given right-hand sides
that represent the apparent conductivity sensed by the device at height $h$
over the ground for the vertical and the horizontal orientation of the coils, 
respectively.
The depth $z$ and the height $h$ are measured in meters, the electrical
conductivity in Siemens per meter.

The conditions for the existence and uniqueness of the solution of system
\eqref{linmodel} have been studied in \cite{dfrv19}, where three collocation
methods were also proposed and compared.
In \cite{dfpr21}, a preliminary version of the method presented in this paper
was applied to the first equation of the model. Here, we extend the
investigation to both equations.

Following \cite{dfrv19}, assuming the a priori information
$\sigma(z)\leq\beta$, for $z>z_0$, we split each integral into the sum 
\begin{equation*}
\int_0^\infty k_\ell(h,z) \sigma(z) \dz =
\int_0^{z_0} k_\ell(h,z) \sigma(z) \dz + 
\int_{z_0}^\infty k_\ell(h,z) \sigma(z) \dz, \qquad \ell=1,2,
\end{equation*}
where $k_1(h,z)=k^V(h+z)$ and $k_2(h,z)=k^H(h+z)$.

Given the expression \eqref{kernels} of the kernels, setting
$\sigma(z)\simeq\beta$, for $z>z_0$ and $z_0$ sufficiently large, the last
integral can be analytically computed.
Then, the system becomes
\begin{equation*}
\int_0^{z_0} k_\ell(h,z) \sigma(z) \dz = g_\ell(h) - 
\beta \int_{z_0}^\infty k_\ell(h,z) \dz, \qquad \ell=1,2,
\end{equation*}
with $g_1(h)=g^V(h)$ and $g_2(h)=g^H(h)$.
In this way, system \eqref{linmodel} is replaced by
\begin{equation}\label{linmodel1}
\begin{cases}
\displaystyle (K_1\sigma)(h) := \displaystyle \int_0^{z_0} k_1(h,z)  
\sigma(z)\dz = g_1(h)-\dfrac{\beta}{\theta(z_0,h)\strut}, \\
\displaystyle (K_2\sigma)(h) := \displaystyle \int_0^{z_0} k_2(h,z)  
\sigma(z)\dz = g_2(h)-\beta\left(\theta(z_0,h) - 2(h+z_0)\right),
\end{cases}
\end{equation}
where (see~\cite{dfrv19})
\begin{equation}\label{theta}
\theta(z,h)=\sqrt{4(z+h)^2+1}.
\end{equation}
We remark that $(-\theta(z,h))^{-1}$ is a primitive function of $k_1(h,z)$,
and $2z-\theta(z,h)$ is that of $k_2(h,z)$.

To determine a solution by applying the theory developed in Sections 
\ref{sec:preliminaries} and \ref{sec:method}, it is necessary to introduce the 
linear function \eqref{gamma}
\begin{equation*}
\gamma(z)=\left(1-\frac{z}{z_0}\right)\alpha+\frac{z}{z_0}\beta,
\end{equation*}
and assume that the values of the electrical conductivity at the endpoints of
the integration interval are known, e.g., $\sigma(0)=\alpha$ and
$\sigma(z_0)=\beta$.
The boundary values can usually be approximated in applications; 
see~\cite{dfrv19}.

By collocating equations \eqref{linmodel1} at the points $h_i$, 
assuming $n_1=n_2=n$ and $h_{1,i}=h_{2,i}=h_i$, for $i=1,\ldots,n$, we obtain
$$
\begin{cases}
\displaystyle\int_0^{z_0} k_1(h_i,z) \phi(z)\dz=\psi_1(h_i), \quad 
i=1,\dots,n, \\
\displaystyle \int_0^{z_0\strut} k_2(h_i,z) \phi(z)\dz=\psi_2(h_i), \quad 
i=1,\dots,n,
\end{cases}
$$
where 
$$
\phi(z)=\sigma(z)-\gamma(z)
$$
is the new unknown function, and
\begin{align*}
\psi_1(h_i)&=g_1(h_i)-\frac{\beta}{\theta(z_0,h_i)\strut} - \int_0^{z_0} 
k_1(h_i,z) \gamma(z) \dz \\
&= g_1(h_i) - \frac{\alpha}{\theta(0,h_i)} - 
\frac{\alpha-\beta}{2z_0}\bigl[\arcsinh(2h_i) -
\arcsinh(2(z_0+h_i)) \bigr], \\
\psi_2(h_i)&=g_2(h_i) - \beta\left(\theta(z_0,h_i) - 2(h_i+z_0)\right) - 
\int_0^{z_0} k_2(h_i,z) \gamma(z) \dz \\
&= g_2(h_i) -\left[\frac{(\alpha-\beta)h_i}{2z_0}+\alpha\right]\theta(0,h_i)
+ \frac{\alpha-\beta}{2} \left[\frac{h_i}{z_0}+1\right] \theta(z_0,h_i) \\
&\phantom{=\ } + 2\beta h_i - z_0(\alpha-\beta) - 
\frac{a-b}{4z_0}\left[\arcsinh(2h_i) - \arcsinh(2(z_0+h_i)) \right]
\end{align*}
are the new right-hand sides.

The second derivative of the Riesz representers can be computed analytically.
Indeed, from \eqref{etasecondo}, it follows that
\begin{equation*}
\begin{aligned}
\eta_{1,i}''(x)
& = \displaystyle \int_0^{z_0} G''_z(x) k_1(h_i,z) \dz \\
& = \frac{1}{2} \left[ \left(1-\frac{x}{z_0}\right)\arcsinh(2h_i) 
-\arcsinh(2(x+h_i)) \right. \\
& \phantom{XXX} \left. + \frac{x}{z_0}\arcsinh(2(z_0+h_i))\right]
\end{aligned}
\end{equation*}
and
\begin{equation*}
\begin{aligned}
\eta_{2,i}''(x)& = \int_0^{z_0} G''_z(x) k_2(h_i,z) \dz \\
& = \frac{1}{2}\left[ 2x(x-z_0)+ 
x\left(1+\frac{h_i}{z_0}\right)\theta(z_0,h_i) - 
(x+h_i)\theta(x,h_i) \right. \\
& \phantom{XXX} \left. + h_i\left(1-\frac{x}{z_0}\right)\theta(0,h_i) 
+\eta_{1,i}''(x)\right],
\end{aligned}
\end{equation*}
where $\theta(x,h)$ is the function defined in \eqref{theta}. 
From the above second derivatives, we can compute the Riesz functions 
\begin{align*}
\eta_{1,i}(y)& = \int_0^{z_0} G''_y(x) \eta_{1,i}''(x) \dx \\
& = \frac{3}{16}\biggl[ (y+h_i)\theta(y,h_i)
-y\left(1+\frac{h_i}{z_0}\right)\theta(z_0,h_i) + h_i\left(
\frac{y}{z_0}-1\right)\theta(0,h_i) \biggr] \\
&\phantom{=} +\frac{1}{2}\left\{ \biggl[\frac{1}{2}\left(\frac{y}{z_0}-1
\right)\left(\frac{1}{8}-h_i^2-\frac{y^2}{3} \right) \right. 
+\frac{y}{3}(y-z_0)\biggr]\arcsinh(2h_i) \\
&\phantom{=} + \biggl[ -\frac{y}{2z_0}\left( \frac{1}{8}-h_i^2-\frac{y^2}{3}
\right) +y\left(h_i+\frac{z_0}{3}\right)
\biggr]\arcsinh(2(z_0+h_i)) \\
&\phantom{=} \left. + \frac{1}{2}\left[\frac{1}{8}-(y+h_i)^2
\right]\arcsinh(2(y+h_i))
\right\}
\end{align*}
and
\begin{align*}
\eta_{2,i}(y)& = \int_0^{z_0}  G''_y(x) \eta_{2,i}''(x) \dx \\
& = \frac{1}{192z_0}\Bigl\{
z_0\left[h_i\left(13-8(3h_iy+h_i^2+3y^2)\right)
+y(13-8y^2)\right]\theta(y,h_i) \Bigl. \\ 
&\phantom{=} \left. +y\left[h_i\left(8(3h_iz_0+h_i^2+2y^2+z_0^2) 
-13\right) 
+z_0\left(8(2y^2-z_0^2)-13 \right) \right]\theta(z_0,h_i) \right. \\
&\phantom{=} \left. +h_i\left[ z_0\left(8(h_i^2+6y^2 -4yz_0)-13 \right)
+y\left( 13-8(h_i^2+2y^2)\right)\right]\theta(0,h_i) \right. \\
&\phantom{=} \Bigr. +16yz_0\left(y^3-2y^2z_0+z_0^3\right)
\Bigr\}\\
&\phantom{=} +\frac{1}{128z_0}\left\{
\left(y-z_0\right) \left[1-16\left(h_i^2+\frac{y^2}{3}-\frac{2yz_0}{3}\right) 
\right] \arcsinh(2h_i) \right. \\
&\phantom{=} \left. + z_0 \left[1-16\left(y+h_i\right)^2\right] 
\arcsinh(2(y+h_i) \right. \\
&\phantom{=} \left. - 
y\left[1-16\left(h_i^2+\frac{y^2}{3}+2h_iz_0+\frac{2z_0^2}{3}\right) 
\right] \arcsinh(2(z_0+h_i))
\right\}.
\end{align*}

\begin{figure}[ht]
\centering
\includegraphics[width=.46\textwidth]{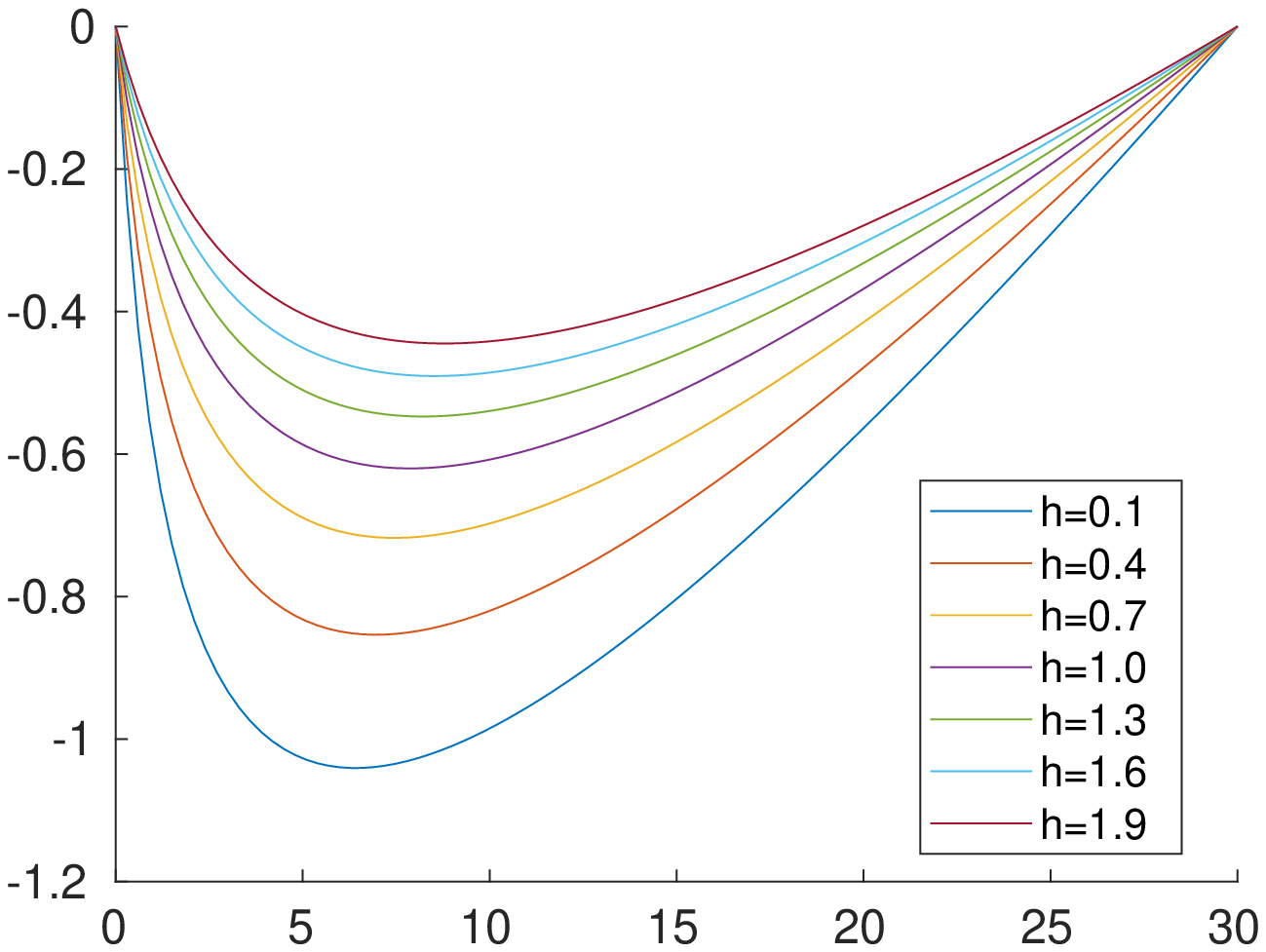}\hfill
\includegraphics[width=.46\textwidth]{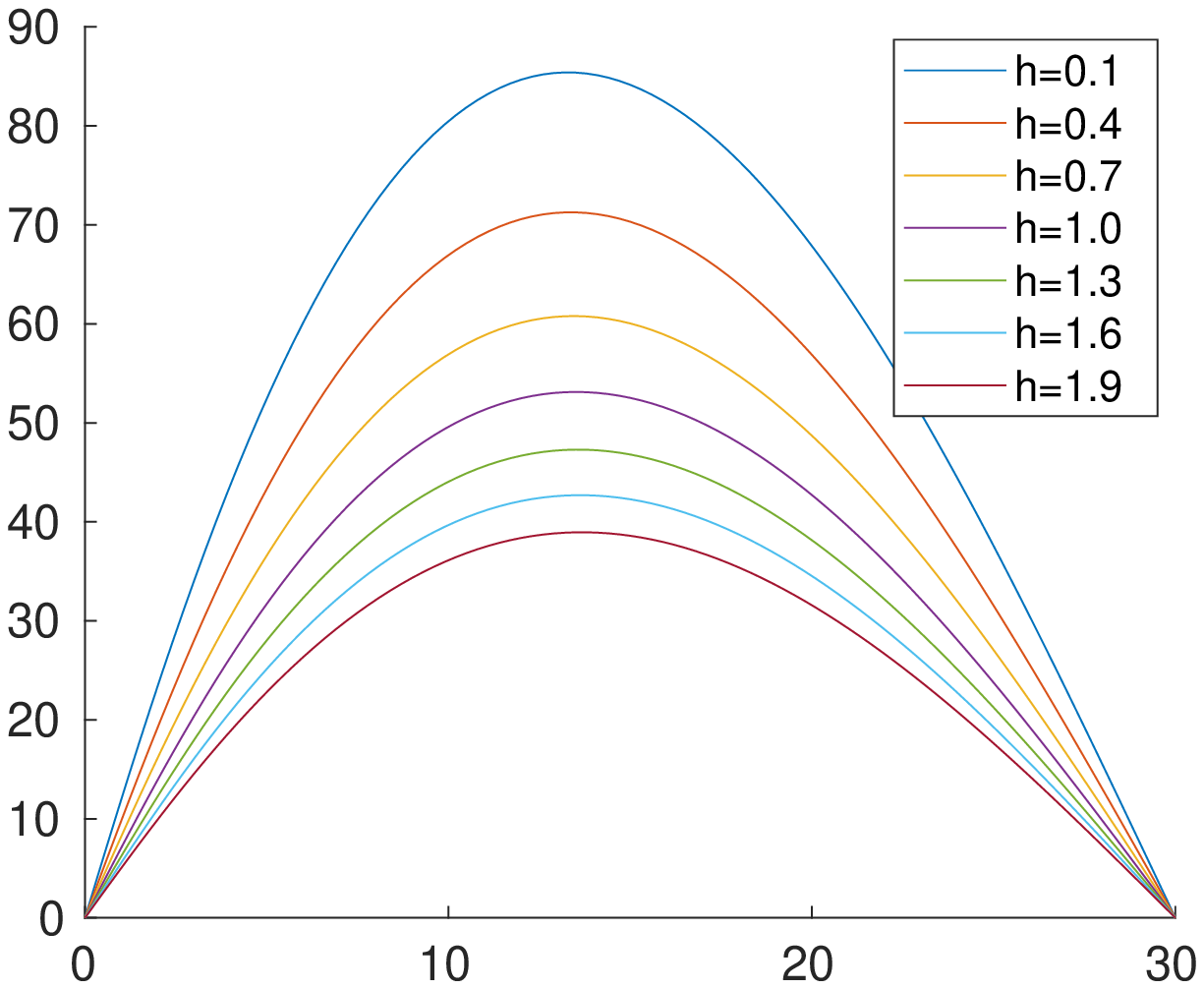}\\ \vspace{.5cm}
\includegraphics[width=.46\textwidth]{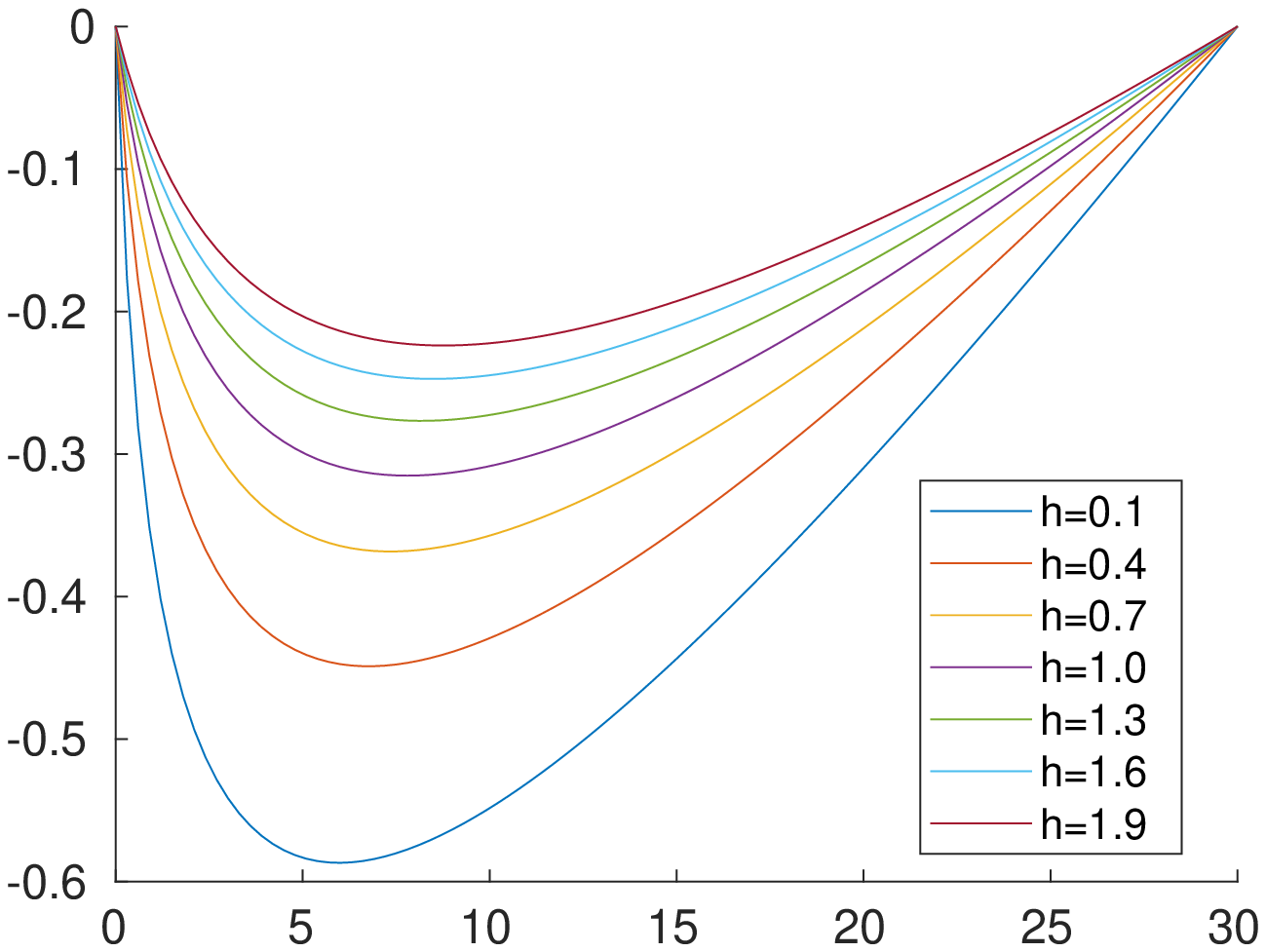}\hfill
\includegraphics[width=.46\textwidth]{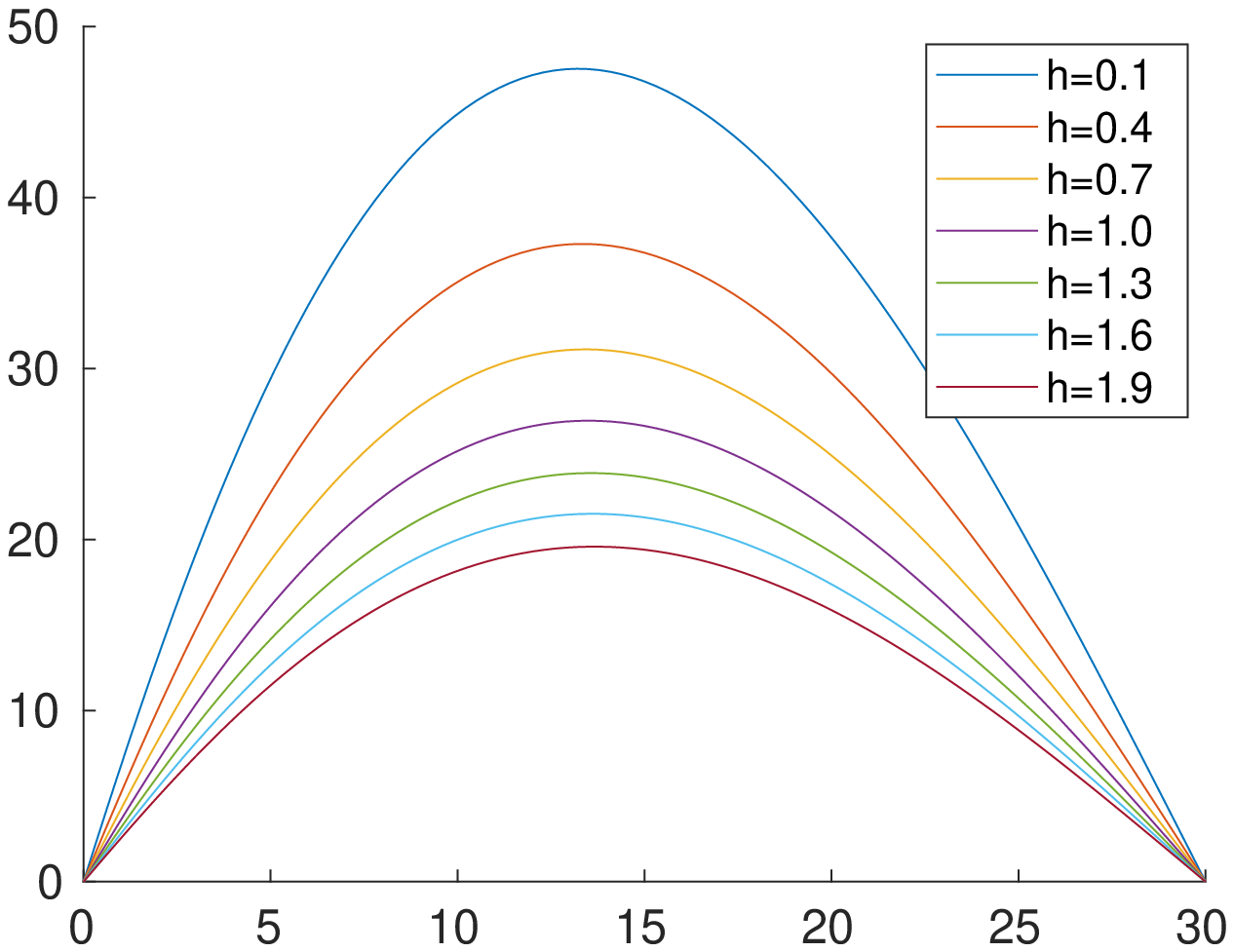} 
\caption{The functions $\eta_{1,i}''$ (top-left), $\eta_{1,i}$ (top-right),
$\eta_{2,i}''$ (bottom-left), and $\eta_{2,i}$ (bottom-right),
with $h_i = 0.1 + (i-1)\frac{3}{10}$ and $i = 1,\ldots,7$.}
\label{fig10}
\end{figure}

Figure \ref{fig10} shows the behavior of $\eta_{1,i}''$ and $\eta_{1,i}$ for 
different values of $h_i$, in the case $z_0 = 30$.
We also report the graphs of the Riesz 
representers for the horizontal orientation $\eta_{2,i}''$ and $\eta_{2,i}$.
Figure~\ref{fig10cap} displays the orthonormal functions $\widehat{\eta}_{1,i}$ 
and $\widehat{\eta}_{2,i}$ defined in \eqref{etaort2}, together with their
second derivatives $\widehat{\eta}_{1,i}''$ and $\widehat{\eta}_{2,i}''$. In 
the summation \eqref{etaort2}, the upper bound is set to $N=12$ for preserving 
the positivity of the eigenvalues.

\begin{figure}[ht]
\centering
\includegraphics[width=.46\textwidth]{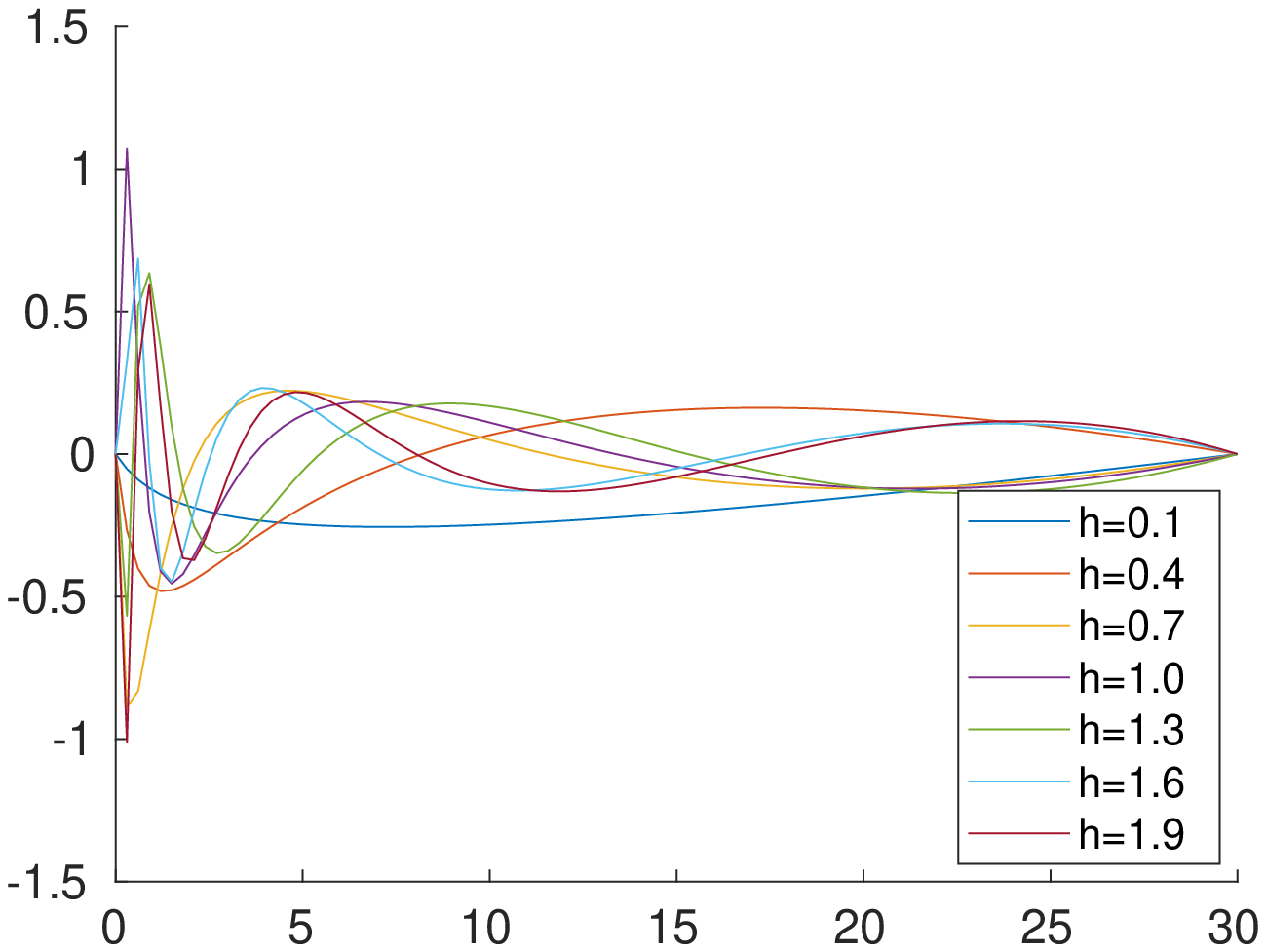}\hfill
\includegraphics[width=.46\textwidth]{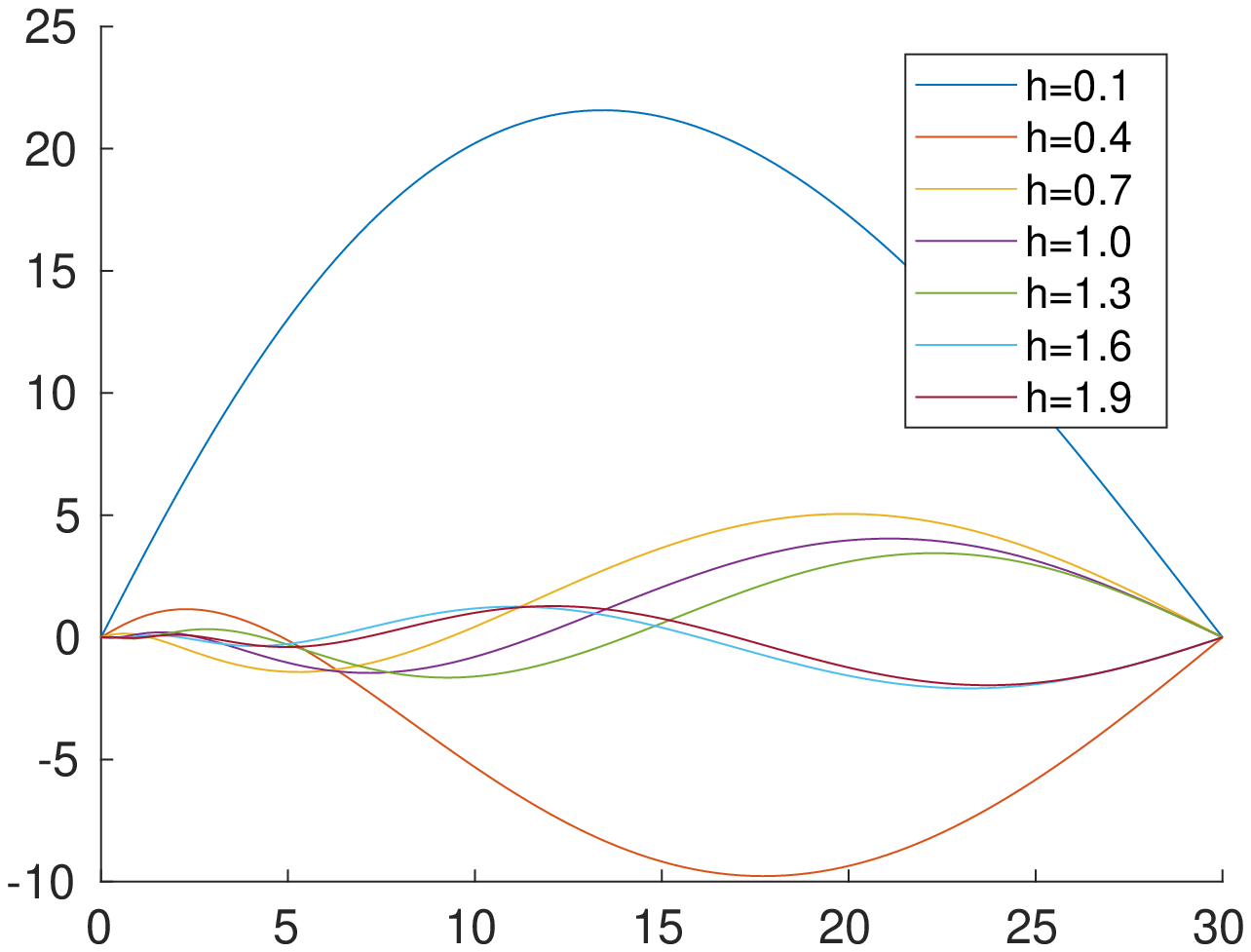}\\ \vspace{.5cm}
\includegraphics[width=.46\textwidth]{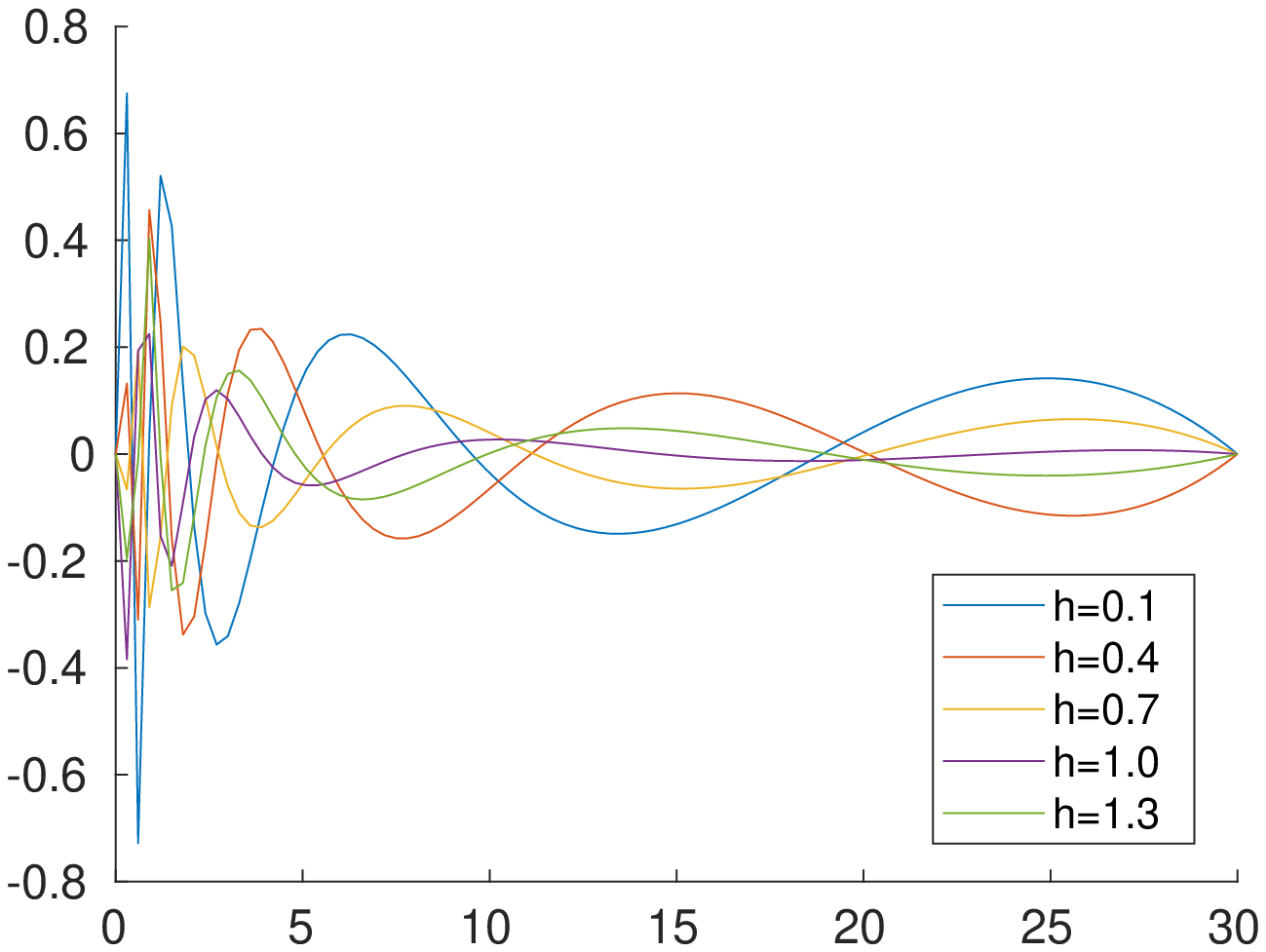}\hfill
\includegraphics[width=.46\textwidth]{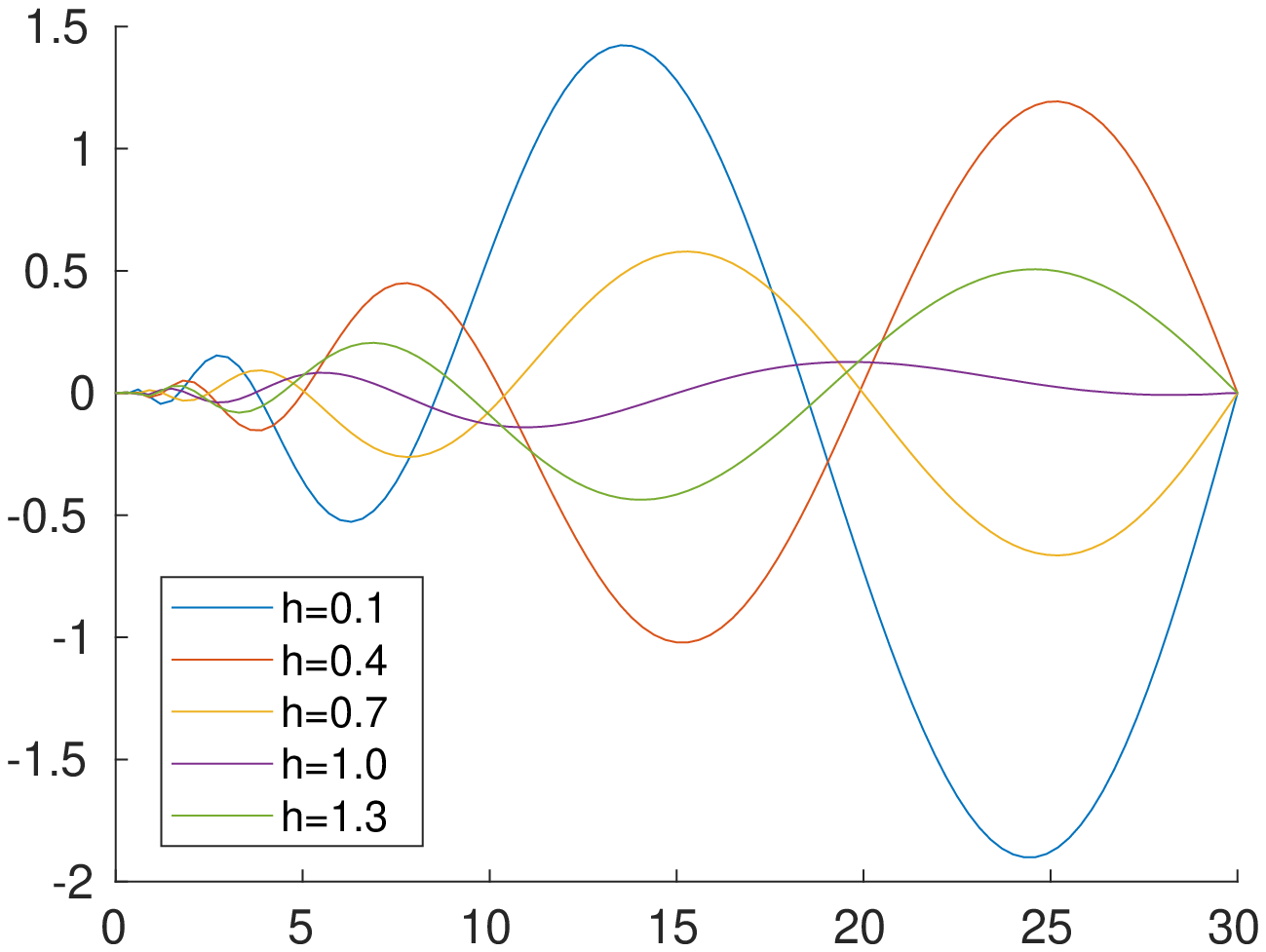} 
\caption{The orthonormal functions $\widehat{\eta}_{1,i}''$ (top-left),
$\widehat{\eta}_{1,i}$ (top-right), $\widehat{\eta}_{2,i}''$ (bottom-left), and
$\widehat{\eta}_{2,i}$ (bottom-right), with $h_i = 0.1 + (i-1)\frac{3}{10}$ and
$i=1,\ldots, 7$.}
\label{fig10cap}
\end{figure}

In order to ascertain the accuracy of our method, when applied to the case
study presented in this section, we consider three different profiles for the
electrical conductivity $\sigma(z)$.
Then, for each test function, we compute the data vector
$\psib_{\text{exact}}$, setting $z_0=4$ and $h_i=0.1 + 0.9\,(i-1)/(n-1)$,
$i=1,\dots,n$, for a chosen dimension $n$.

The computation of the exact data vector is performed by the \texttt{quadgk}
function of Matlab, which implements an adaptive Gauss-Kronrod quadrature
formula.

In applications, the available data is typically contaminated by errors. The 
perturbed data vector $\psib$ is determined by adding to $\psib_{\text{exact}}$ 
a noise-vector $\bm{e}$, obtained by substituting in \eqref{noise}
$\psib_{\text{exact}}$ to $\bm{g}_{\text{exact}}$ and setting $N_m=2n$.
The noise level is determined by the parameter $\delta$.

\paragraph{Test function 1.}
In the first example, we assume a smooth profile for the exact solution of
\eqref{linmodel}
\begin{equation*}
\sigma_1(z)=\ee^{-(z-1)^2}+1.
\end{equation*}
We set $\alpha=\sigma_1(0)=\ee^{-1}+1$ and $\beta =\sigma_1(z_0)=\ee^{-9}+1$.

We remark that this test function is extremely smooth, so the function 
$\phi_1(z) = \sigma_1(z) - \gamma_1(z)$ can be assumed to approximately belong 
to $\cN(\bm{K})^\perp=\Span\{\eta_{1},\dots,\eta_{N_m}\}$,
the space which contains the minimal-norm solution.

\begin{figure}[ht]
\centering
\includegraphics[width=.46\textwidth]{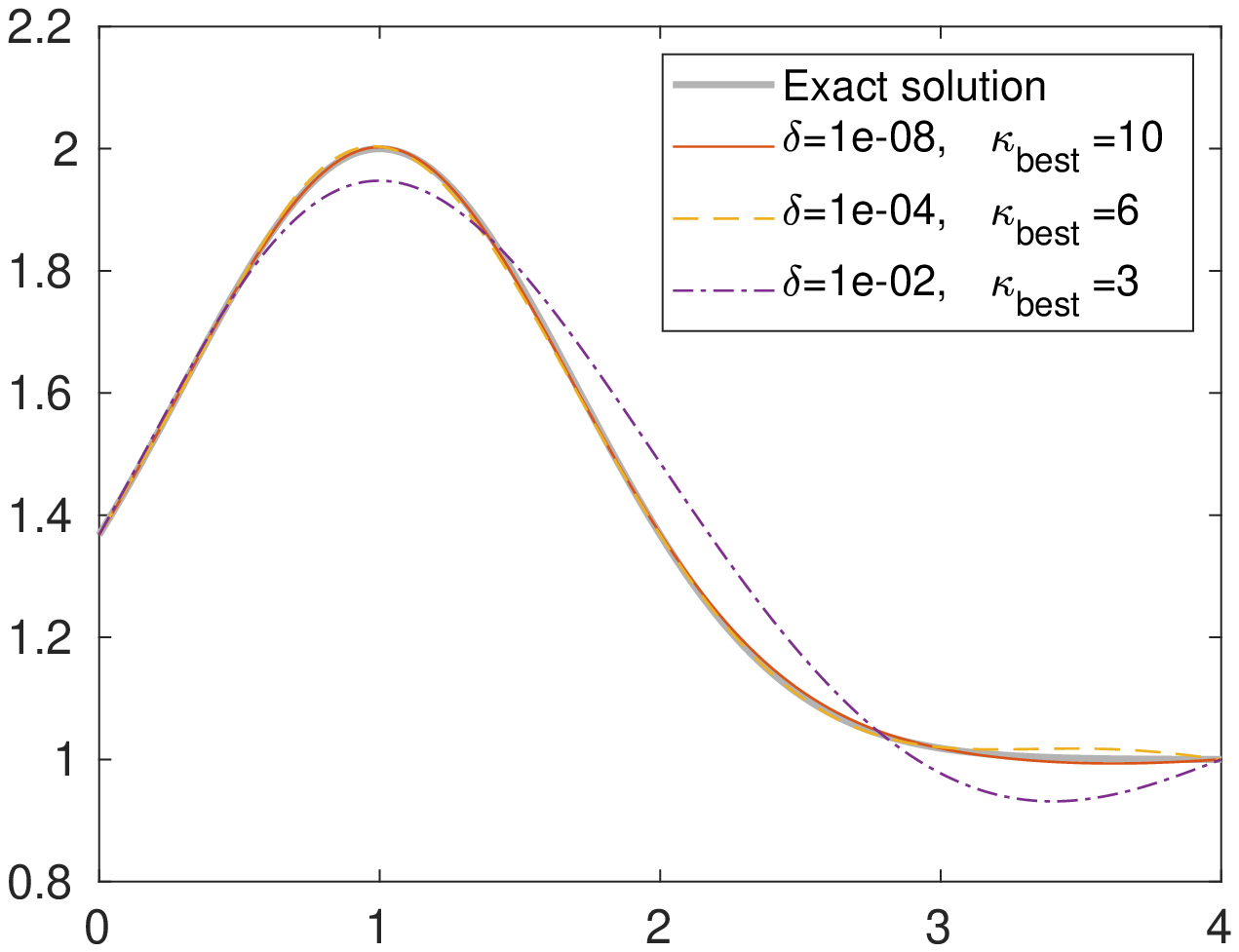}\hfill
\includegraphics[width=.46\textwidth]{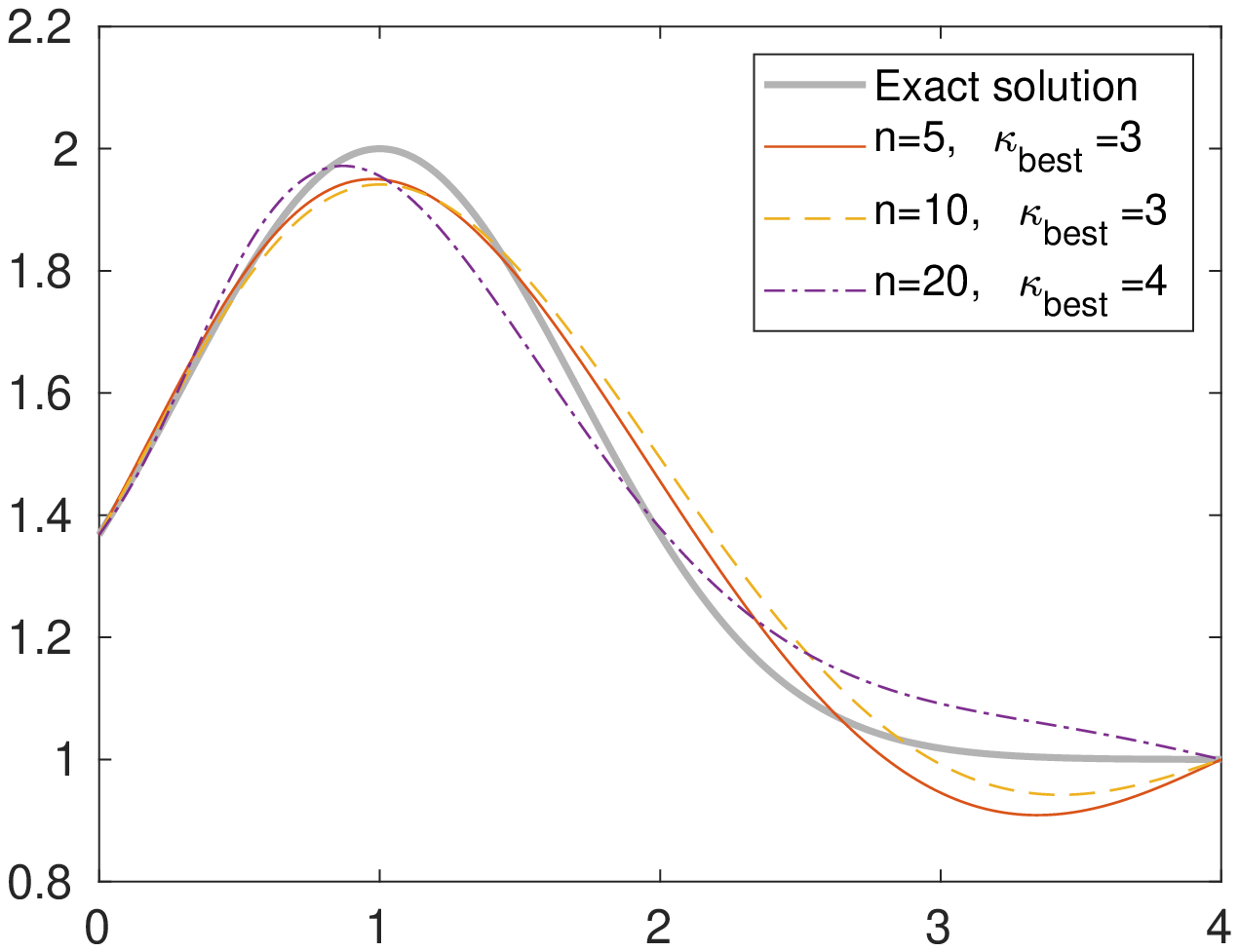}
\caption{On the left: regularized solution
$\sigma_1^{(\kappa_{\text{best}})}(z)$ for noise levels $\delta =
10^{-8}, 10^{-4}, 10^{-2}$, and $n=10$. On the right: regularized solution
$\sigma_1^{(\kappa_{\text{best}})}(z)$ for $n=5, 10, 20$, and noise
level $\delta = 10^{-2}$; the optimal regularization parameter
$\kappa_\text{best}$ is displayed in the legend.}
\label{fig7}
\end{figure}

Figure \ref{fig7} displays the results obtained by applying the method 
described in this paper to the electromagnetic integral model
\eqref{linmodel} with the optimal regularization parameter.
On the left-hand side, we report the approximation of the solution for
different noise levels $\delta = 10^{-8}, 10^{-4}, 10^{-2}$, and $n=10$;
on the right-hand side, the results for $n=5, 10, 20$ and $\delta=10^{-2}$
are depicted.
All the reconstructions are accurate and identify with sufficient accuracy 
the
maximum value of the conductivity and its depth localization.
The graph on the left shows that, even for an increasing noise level, the
method is still able to produce reliable results.
On the other hand, from the graph on the right we deduce that both the 
reconstructions and the optimal value of the regularization parameter are not
very sensitive on the size of the data vector.

In order to test the method in realistic conditions, in Figure~\ref{fig12} we
compare the optimal solution to the approximate solutions corresponding to the
parameters $\kappa_\text{d}$ and $\kappa_\text{lc}$, estimated by the
discrepancy principle with $\tau=1.3$ and by the L-curve criterion,
respectively. In this case, we have fixed $n=10$ and a noise level
$\delta=10^{-4}$. Both estimation techniques appear to be effective.

\begin{figure}[ht]
\centering
\includegraphics[scale=0.5]{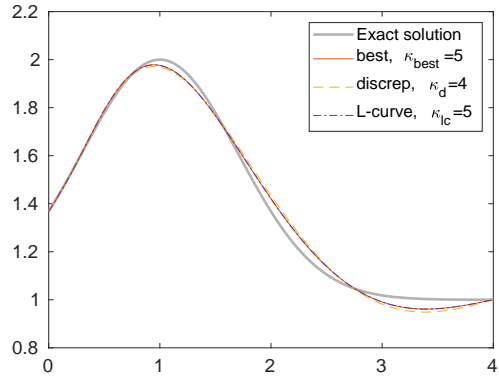}
\caption{Regularized solution $\sigma_1^{(\kappa)}(z)$ with $n=10$ and
$\delta=10^{-4}$; the optimal regularization parameter $\kappa_{\text{best}}$
is compared to those determined by the discrepancy principle $\kappa_\text{d}$
and by the L-curve $\kappa_\text{lc}$.}
\label{fig12}
\end{figure}

\begin{figure}[ht]
\centering
\includegraphics[width=.46\textwidth]{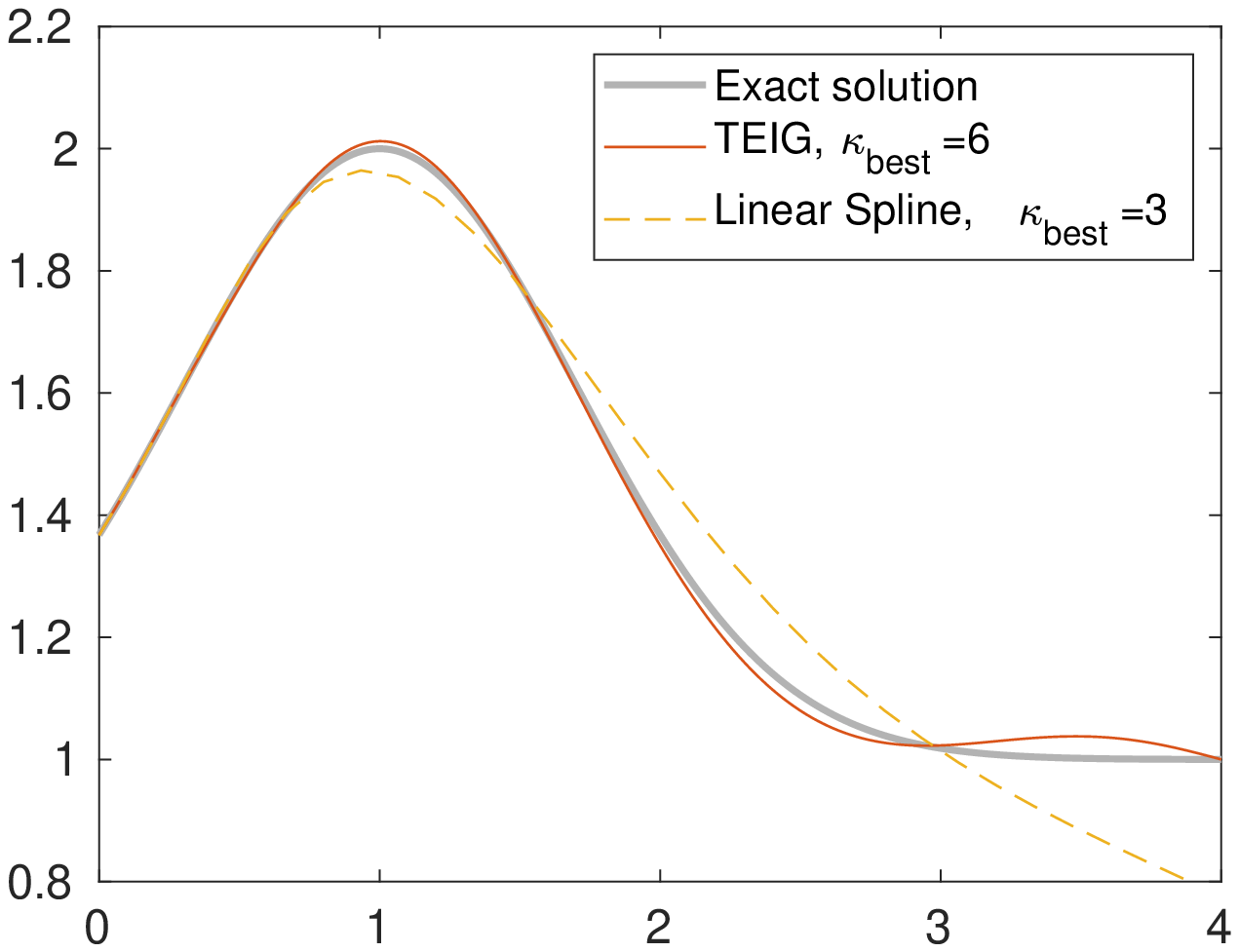}\hfill
\includegraphics[width=.46\textwidth]{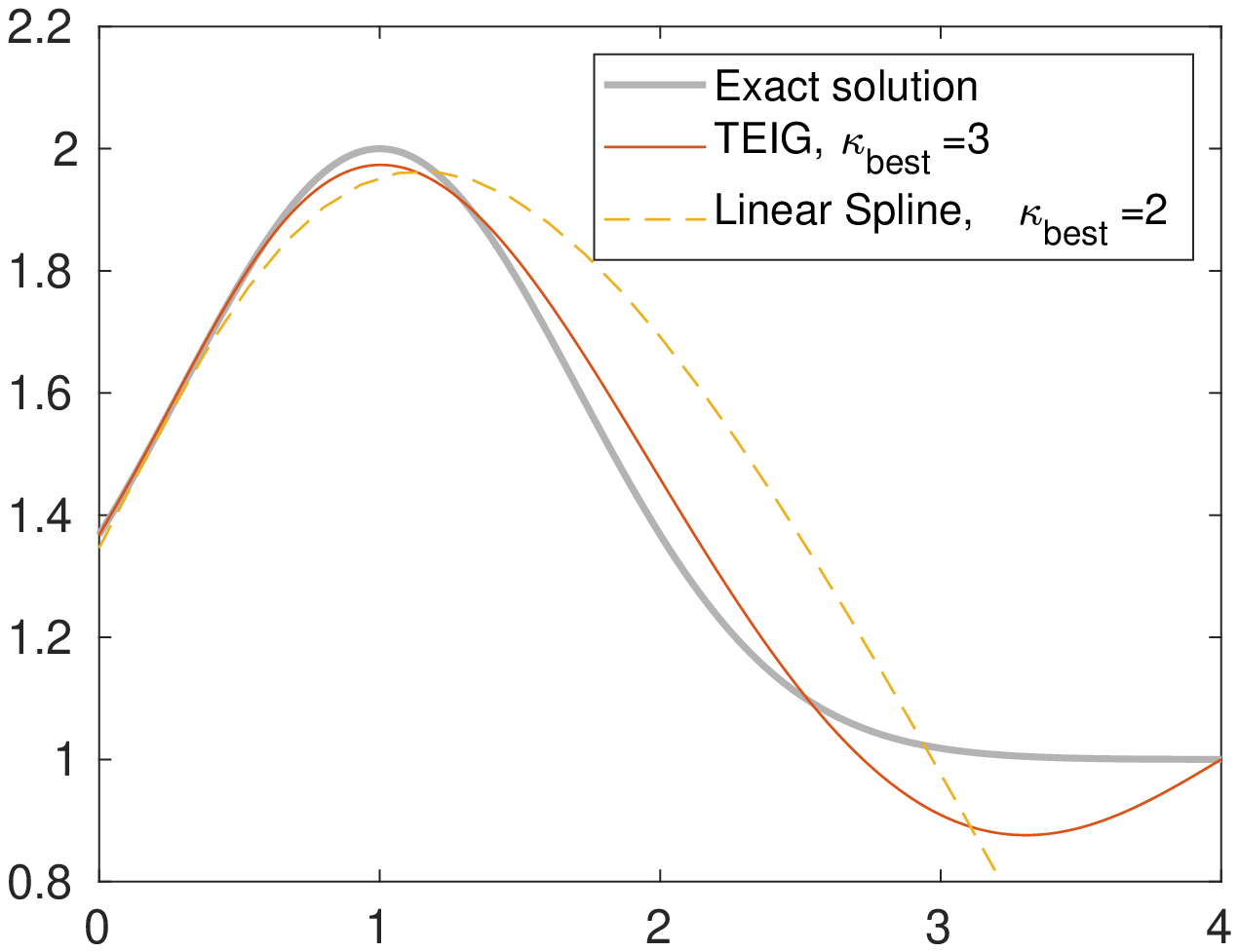}
\caption{Regularized solutions $\sigma_1^{(\kappa_{\text{best}})}(z)$ for
$n=20$ by using an approach presented in \cite{dfrv19} and the new proposed
method. On the left noise level $\delta=10^{-4}$, on the right 
$\delta=10^{-2}$. The values of the regularization parameters 
$\kappa_\text{best}$ are displayed in the legend.}
\label{figconflin}
\end{figure}

As already remarked, the linear model \eqref{linmodel} has been analyzed in
\cite{dfrv19}, where some collocation methods were discussed.
In Figure~\ref{figconflin} we compare the most effective technique presented in
\cite{dfrv19}, based on a linear spline approximation coupled to a TGSVD
regularization of the resulting linear system, to our new approach.
The two graphs report the solutions obtained with two different noise levels, 
$\delta=10^{-4}$ and $\delta=10^{-2}$, when $n=20$ and choosing the ``best''
regularization parameter.
In both cases,  the new method produces more accurate solutions than
the linear spline approach.
In particular, in the graph on the left, the spline solution is not able to
recognize the flattening of the conductivity below 3m depth. In the one on
the right, it does not even identify the correct depth of the maximum.

\paragraph{Test function 2.}
In the second experiment, we select the following model function
\begin{equation*}
\sigma_2(z)= \begin{cases}
0.8z+0.2, & z \in [0,1], \\
0.8\ee^{-(z-1)}+0.2, & z \in (1,\infty),
\end{cases}
\end{equation*}
and set $\alpha=0.2$ and $\beta=0.2+0.8\ee^{-3}$.

The graph in the left pane of Figure~\ref{fig8} reports the optimal regularized
solutions corresponding to the noise levels $\delta = 10^{-8}$, $10^{-4}$, 
$10^{-2}$, and $n = 10$.
The optimal parameter is displayed in the legend. The reconstruction is not 
accurate as in the previous test, because the solution is non-differentiable
and, consequently, it does not belong to $\cN(\bm{K})^\perp$.
Anyway, the algorithm correctly identifies the position of the maximum of the
electrical conductivity at 1m depth.

\begin{figure}[ht]
\centering
\includegraphics[width=.46\textwidth]{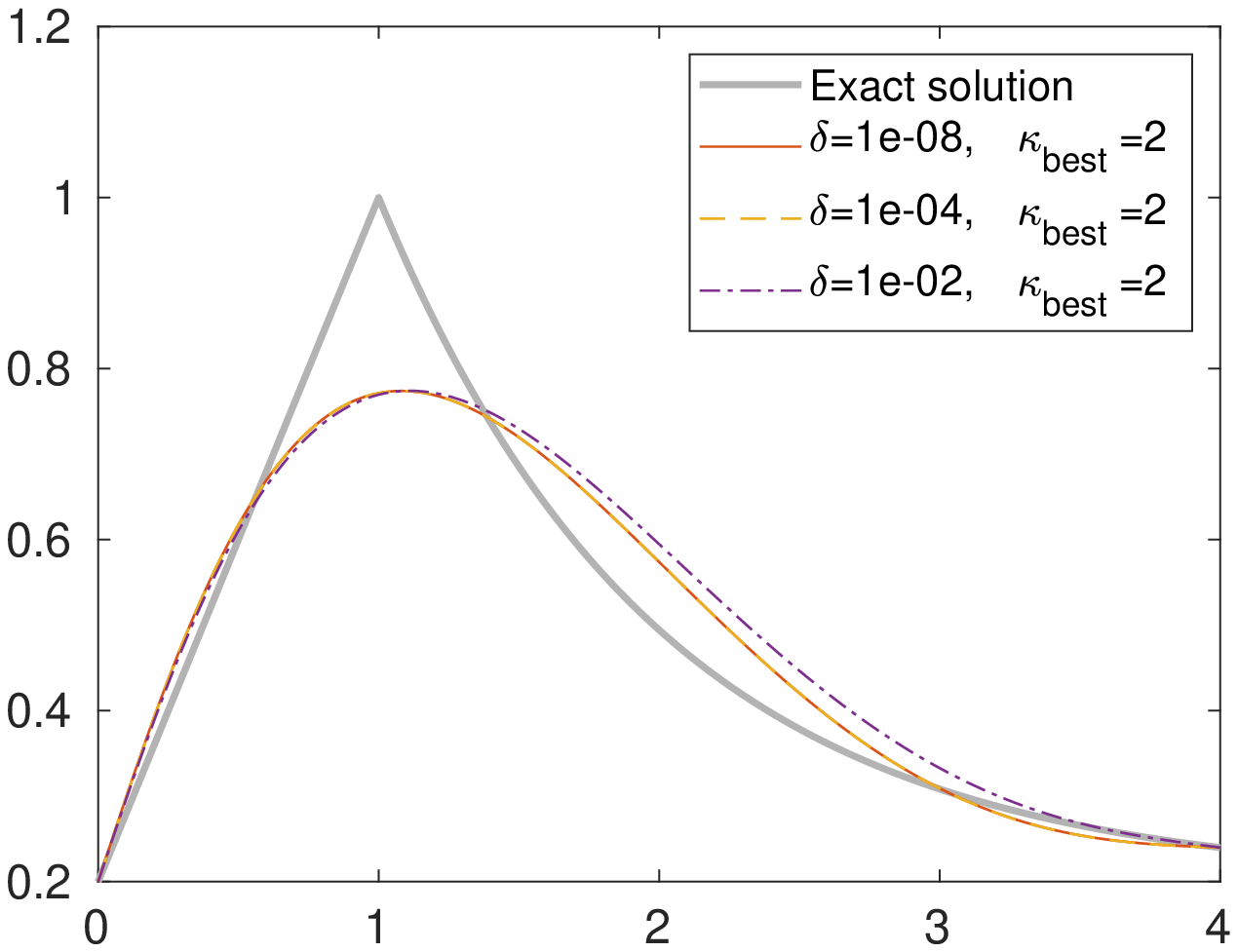}\hfill
\includegraphics[width=.47\textwidth]{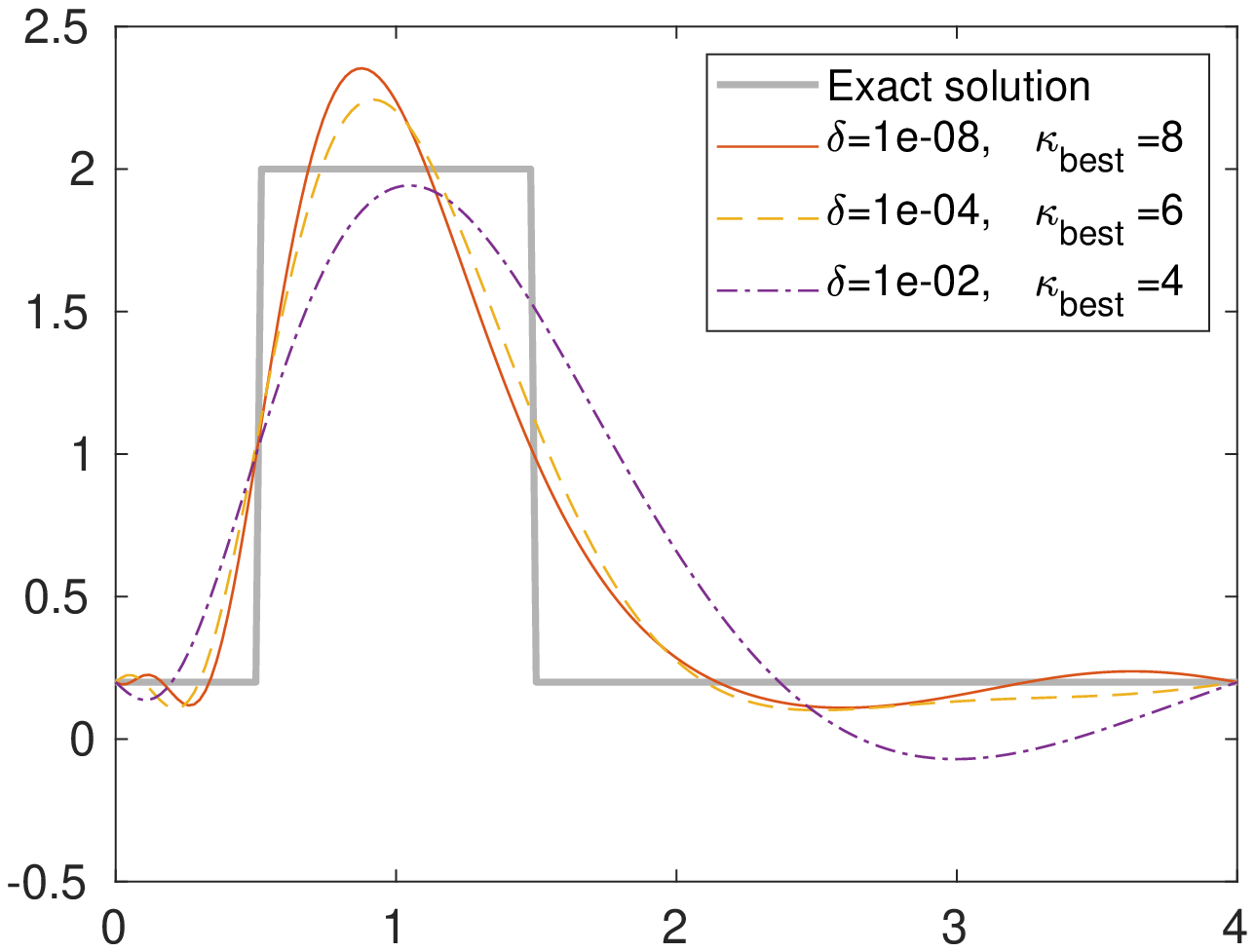}
\caption{Regularized solution $\sigma_2^{(\kappa_{\text{best}})}(z)$ (left) and
$\sigma_3^{(\kappa_{\text{best}})}(z)$ (right), for $n=10$ and different noise
levels $\delta = 10^{-8}, 10^{-4}, 10^{-2}$; the optimal regularization
parameter $\kappa_\text{best}$ is displayed in the legend.}
\label{fig8}
\end{figure}


\paragraph{Test function 3.}
The third model function is the step function
\begin{equation*}
\sigma_3(z)= \begin{cases}
0.2, & z \in (0,0.5), \\
2, & z \in [0.5,1.5], \\
0.2, & z \in (1.5, \infty),
\end{cases}
\end{equation*}
with $\alpha=\beta=0.2$.

The graph on the right-hand side of Figure~\ref{fig8} reports the optimal
regularized solutions for $\delta = 10^{-8}$, $10^{-4}$, $10^{-2}$, and 
$n=10$.
Since the function is discontinuous, comments similar to the previous
example are valid.


\medskip

\section*{Acknowledgements}
The authors would like to thank an anonymous referee for his insightful 
comments that lead to improvements of the presentation.

Luisa Fermo, Federica Pes, and Giuseppe Rodriguez are partially supported by
Regione Autonoma della Sardegna research project ``Algorithms and Models for
Imaging Science [AMIS]'' (RASSR57257, intervento finanziato con risorse FSC
2014-2020 - Patto per lo Sviluppo della Regione Sardegna). Luisa Fermo is
partially supported by INdAM-GNCS 2020 project ``Approssimazione multivariata ed
equazioni funzionali per la modellistica numerica''. Patricia D\'iaz de Alba,
Federica Pes, and Giuseppe Rodriguez are partially supported by INdAM-GNCS 2020
project ``Tecniche numeriche per l'analisi delle reti complesse e lo studio dei
problemi inversi''. Patricia D\'iaz de Alba gratefully acknowledges Fondo
Sociale Europeo REACT EU - Programma Operativo Nazionale Ricerca e Innovazione
2014-2020 and Ministero dell'Universit\`a e della Ricerca for the financial
support. Federica Pes gratefully acknowledges CRS4 (Centro di Ricerca, Sviluppo
e Studi Superiori in Sardegna) for the financial support of her Ph.D.
scholarship.

\bibliographystyle{siam}
\bibliography{biblio}

\begin{thebibliography}{10}

\bibitem{agrr21}
{\sc A.~Alqahtani, S.~Gazzola, L.~Reichel, and G.~Rodriguez}, {\em On the block
  {L}anczos and block {G}olub-{K}ahan reduction methods applied to discrete
  ill-posed problems}, Numer. Linear Algebra Appl., 2021 (2021), p.~e2376.

\bibitem{aronszajn}
{\sc N.~Aronszajn}, {\em Theory of reproducing kernels}, Trans. Am. Math. Soc.,
  68 (1950), pp.~337--404.

\bibitem{Atkinson}
{\sc K.~E. Atkinson}, {\em The Numerical Solution of Integral Equations of the
  Second Kind}, vol.~552, Cambridge University Press, Cambridge, 1997.

\bibitem{baart}
{\sc M.~L. Baart}, {\em The use of auto-correlation for pseudo-rank
  determination in noisy ill-conditioned linear least-squares problems}, IMA J.
  Numer. Anal., 2 (1982), pp.~241--247.

\bibitem{berlinet}
{\sc A.~Berlinet and C.~Thomas-Agnan}, {\em Reproducing kernel Hilbert spaces
  in probability and statistics}, Springer, New York, 2004.

\bibitem{bjo96}
{\sc {\AA}.~Bj{\"o}rck}, {\em Numerical Methods for Least Squares Problems},
  SIAM, Philadelphia, 1996.

\bibitem{castro2012}
{\sc L.~P. Castro, H.~Itou, and S.~Saitoh}, {\em Numerical solutions of linear
  singular integral equations by means of {T}ikhonov regularization and
  reproducing kernels}, Houston J. Math, 38 (2012), pp.~1261--1276.

\bibitem{cucker2002}
{\sc F.~Cucker and S.~Smale}, {\em On the mathematical foundations of
  learning}, Bull. Amer. Math. Soc., 39 (2002), pp.~1--49.

\bibitem{cui}
{\sc M.~Cui and Y.~Lin}, {\em Nonlinear Numerical Analysis in the Reproducing
  Kernel Space}, Nova Science Publishers, New York, 2008.

\bibitem{dfpr21}
{\sc P.~D{\'\i}az~de Alba, L.~Fermo, F.~Pes, and G.~Rodriguez}, {\em
  Minimal-norm {RKHS} solution of an integral model in geo-electromagnetism},
  in 2021 21st International Conference on Computational Science and Its
  Applications (ICCSA), pp.~21--28.

\bibitem{dfrv19}
{\sc P.~D\'{\i}az~de Alba, L.~Fermo, C.~van~der Mee, and G.~Rodriguez}, {\em
  Recovering the electrical conductivity of the soil via a linear integral
  model}, J. Comput. Appl. Math., 352 (2019), pp.~132--145.

\bibitem{Engl}
{\sc H.~W. Engl, M.~Hanke, and A.~Neubauer}, {\em Regularization of {I}nverse
  {P}roblems}, Kluwer, Dordrecht, 1996.

\bibitem{evgeniou2000}
{\sc T.~Evgeniou, M.~Pontil, and T.~Poggio}, {\em Regularization networks and
  support vector machines}, Adv. Comp. Math., 13 (2000), pp.~1--50.

\bibitem{gorr16}
{\sc S.~Gazzola, E.~Onunwor, L.~Reichel, and G.~Rodriguez}, {\em On the
  {L}anczos and {G}olub--{K}ahan reduction methods applied to discrete
  ill-posed problems}, Numer. Linear Algebra Appl., 23 (2016), pp.~187--204.

\bibitem{gmrs95}
{\sc T.~Goodman, C.~Micchelli, G.~Rodriguez, and S.~Seatzu}, {\em On the
  {C}holesky factorization of the {G}ram matrix of locally supported
  functions}, BIT, 35 (1995), pp.~233--257.

\bibitem{gmrs97}
\leavevmode\vrule height 2pt depth -1.6pt width 23pt, {\em Spectral
  factorization of {L}aurent polynomials}, Adv. Comput. Math., 7 (1997),
  pp.~429--454.

\bibitem{gmrs98}
\leavevmode\vrule height 2pt depth -1.6pt width 23pt, {\em On the limiting
  profile arising from orthonormalizing shifts of exponentially decaying
  functions}, IMA J. Numer. Anal., 18 (1998), pp.~331--354.

\bibitem{gmrs00}
\leavevmode\vrule height 2pt depth -1.6pt width 23pt, {\em On the {C}holesky
  factorisation of the {G}ram matrix of multivariate functions}, SIAM J. Matrix
  Anal. Appl., 22 (2000), pp.~501--526.

\bibitem{Groetsch}
{\sc C.~W. Groetsch}, {\em Elements of Applicable Functional Analysis}, vol.~55
  of Monographs and Textbooks in Pure and Applied Mathematics, Dekker, New York
  and Basel, 1980.

\bibitem{Groetsch2007}
\leavevmode\vrule height 2pt depth -1.6pt width 23pt, {\em Integral equations
  of the first kind, inverse problems and regularization: A crash course}, in
  Journal of Physics: Conference Series, vol.~73, 2007, p.~012001.

\bibitem{Hadamard}
{\sc J.~Hadamard}, {\em Lectures on {C}auchy's {P}roblem in {L}inear {P}artial
  {D}ifferential {E}quations}, Yale University Press, New Haven, 1923.

\bibitem{Ha0}
{\sc P.~C. Hansen}, {\em Analysis of the discrete ill-posed problems by means
  of the {L}-curve}, SIAM Rev., 34 (1992), pp.~561--580.

\bibitem{Hansen}
\leavevmode\vrule height 2pt depth -1.6pt width 23pt, {\em Rank--Deficient and
  Discrete Ill--Posed Problems}, SIAM, Philadelphia, 1998.

\bibitem{Hansentools}
\leavevmode\vrule height 2pt depth -1.6pt width 23pt, {\em Regularization
  {T}ools: version 4.0 for {M}atlab 7.3}, Numer. Algorithms, 46 (2007),
  pp.~189--194.

\bibitem{hjr07}
{\sc P.~C. Hansen, T.~K. Jensen, and G.~Rodriguez}, {\em An adaptive pruning
  algorithm for the discrete {L}-curve criterion}, J. Comput. Appl. Math., 198
  (2007), pp.~483--492.

\bibitem{HO}
{\sc P.~C. Hansen and D.~P. O'Leary}, {\em The use of the {L}-curve in the
  regularization of discrete ill--posed problems}, SIAM J. Sci. Comput., 14
  (1993), pp.~1487--1503.

\bibitem{hendr02}
{\sc J.~M.~H. Hendrickx, B.~Borchers, D.~L. Corwin, S.~M. Lesch, A.~C.
  Hilgendorf, and J.~Schlue}, {\em Inversion of soil conductivity profiles from
  electromagnetic induction measurements}, Soil Sci. Soc. Am. J., 66 (2002),
  pp.~673--685.
\newblock Package NONLINEM38 available at
  \url{http://infohost.nmt.edu/~borchers/nonlinem38.html}.

\bibitem{hille}
{\sc E.~Hille}, {\em Introduction to general theory of reproducing kernels},
  Rocky Mt. J. Math., 2 (1972), pp.~321--368.

\bibitem{Kress99}
{\sc R.~Kress}, {\em Linear Integral Equation}, Springer, Berlin, 1999.

\bibitem{McNeill}
{\sc J.~D. McNeill}, {\em Electromagnetic terrain conductivity measurement at
  low induction numbers}, Tech. Rep. TN-6, Geonics Limited, Mississauga,
  Ontario, Canada, 1980.

\bibitem{Morozov}
{\sc V.~A. Morozov}, {\em The choice of parameter when solving functional
  equations by regularization}, Dokl. Akad. Nauk. SSSR, 175 (1962),
  pp.~1225--1228.

\bibitem{pr20}
{\sc F.~Pes and G.~Rodriguez}, {\em The minimal-norm {G}auss-{N}ewton method
  and some of its regularized variants}, Electron. Trans. Numer. Anal., 53
  (2020), pp.~459--480.

\bibitem{pr21}
\leavevmode\vrule height 2pt depth -1.6pt width 23pt, {\em A doubly relaxed
  minimal-norm {G}auss--{N}ewton method for underdetermined nonlinear
  least-squares problems}, Appl. Numer. Math., 171 (2022), pp.~233--248.

\bibitem{rr13}
{\sc L.~Reichel and G.~Rodriguez}, {\em Old and new parameter choice rules for
  discrete ill-posed problems}, Numer. Algorithms, 63 (2013), pp.~65--87.

\bibitem{rs90}
{\sc G.~Rodriguez and S.~Seatzu}, {\em Numerical solution of the finite moment
  problem in a reproducing kernel {H}ilbert space}, J. Comput. Appl. Math., 33
  (1990), pp.~233--244.

\bibitem{rs93a}
\leavevmode\vrule height 2pt depth -1.6pt width 23pt, {\em On the numerical
  inversion of the {L}aplace transform in reproducing kernel {H}ilbert spaces},
  IMA J. Numer. Anal., 13 (1993), pp.~463--475.

\bibitem{sawano2008}
{\sc Y.~Sawano, H.~Fujiwara, and S.~Saitoh}, {\em Real inversion formulas of
  the {L}aplace transform on weighted function spaces}, Complex Anal. Oper.
  Theory, 2 (2008), pp.~511--521.

\bibitem{stewart}
{\sc G.~Stewart}, {\em Matrix Algorithms: Volume 1: Basic Decompositions},
  SIAM, Philadelphia, PA, 1998.

\bibitem{sb91}
{\sc J.~Stoer and R.~Bulirsch}, {\em Introduction to Numerical Analysis},
  vol.~12 of Texts in Applied Mathematics, Springer-Verlag, New York,
  second~ed., 1991.

\bibitem{wahba2003}
{\sc G.~Wahba}, {\em An introduction to reproducing kernel {H}ilbert spaces and
  why they are so useful}, in Proceedings of the 13th IFAC Symposium on System
  Identification (SYSID 2003), 2003.

\bibitem{wang2019}
{\sc R.~Wang and Y.~Xu}, {\em Functional reproducing kernel {H}ilbert spaces
  for non-point-evaluation functional data}, Appl. Comput. Harmon. Anal., 46
  (2019), pp.~569--623.

\bibitem{wang2021}
\leavevmode\vrule height 2pt depth -1.6pt width 23pt, {\em Regularization in a
  functional reproducing kernel {H}ilbert space}, J. Complex.,  (2021),
  p.~101567.

\bibitem{wing1991}
{\sc G.~M. Wing}, {\em A primer on integral equations of the first kind: the
  problem of deconvolution and unfolding}, SIAM, Philadelphia, PA, 1991.

\bibitem{yosida2}
{\sc K.~Yosida}, {\em Functional Analysis}, Classics in Mathematics, Springer,
  Berlin, 1995.

\end{thebibliography}

\end{document}